	\theoremstyle{plain}
\theoremstyle{definition}
\newtheorem{exa}{Example}[section]
\newcommand{\be}{\begin{eqnarray}}
\newcommand{\ee}{\end{eqnarray}}
\newcommand{\beq}{\begin{equation}}
\newcommand{\eeq}{\end{equation}}
\newcommand{\beqn}{\begin{equation*}}
\newcommand{\eeqn}{\end{equation*}}
\newcommand{\round}[1]{\lfloor#1\rfloor}
\newtheorem{thm}{Theorem}[section]
\newtheorem{prop}[thm]{Proposition}
\newtheorem{cor}[thm]{Corollary}
\newtheorem{lem}[thm]{Lemma}
\theoremstyle{definition}
\newtheorem{claim}[thm]{Claim}
\newtheorem{remark}[thm]{Remark}
\newcommand\cA{{\mathcal A}}
\newcommand\cB{{\mathcal B}}
\newcommand\cC{{\mathcal C}}
\newcommand\cD{{\mathcal D}}
\newcommand\cE{{\mathcal E}}
\newcommand\cF{{\mathcal F}}
\newcommand\cI{{\mathcal I}}
\newcommand\cJ{{\mathcal J}}
\newcommand\cK{{\mathcal K}}
\newcommand\cL{{\mathcal L}}
\newcommand\cM{{\mathcal M}}
\newcommand\cN{{\mathcal N}}
\newcommand\cP{{\mathcal P}}
\newcommand\cT{{\mathcal T}}
\newcommand\bN{{\mathbb N}}
\newcommand\bR{{\mathbb R}}
\newcommand\bZ{{\mathbb Z}}
\newcommand\fD{{\mathfrak D}}
\newcommand\fF{{\mathfrak F}}
\newcommand{\id}{{\mathrm{id}}}
\newcommand{\ve}{\varepsilon}
\def\bfC{\mathbf{C}}
\def\bfE{\mathbf{E}}
\def\bfP{\mathbf{P}}
\newlength{\lmar} 
\begin{document}

\title[Multivariate Berry--Esseen theorem for dynamical systems]{A multivariate 
Berry--Esseen theorem for time-dependent expanding
dynamical systems}

\author[Juho Lepp\"anen]{Juho Lepp\"anen}

\thanks{\textsc{Department of Mathematics, 
		Tokai University, Kanagawa, 259-1292, Japan}}
\thanks{ \textit{E-mail address}: leppanen.juho.heikki.g@tokai.ac.jp}
\thanks{\textit{Date}:  March 30, 2025}
\thanks{2020  \textit{Mathematics 
		Subject Classification.} 60F05; 37A05, 37A50} 
\thanks{\textit{Key words and phrases.} 
	Stein's method; multivariate normal approximation; dynamical systems} 
\thanks{This research was supported by 
	JSPS via the project LEADER. The author is thankful to the referees for their helpful comments, in particular to one referee for identifying a mistake in the original proof of the main theorem related to the decomposition in Section~\ref{sec:Ri_decomp}.}


 





\begin{abstract} 
We adapt Stein's method to obtain Berry--Esseen type error bounds in the multivariate central limit theorem for non-stationary processes generated by time-dependent compositions of uniformly expanding dynamical systems. In a particular case of random 
dynamical systems with a strongly mixing base transformation, we derive an error estimate of order 
$O(N^{-1/2})$ in the quenched multivariate CLT, provided that 
the covariance matrix \say{grows linearly} with the number of summands $N$. The error in the normal approximation is estimated 
for the class of all convex sets. 
\end{abstract}

\maketitle




\section{Introduction}

Let $(\xi_n)_{n \ge 1}$ be a sequence of centered real-valued random variables. The central limit theorem (CLT) states that, under suitable conditions on the moments and dependence structure of $(\xi_n)$, the normalized sums $\sigma_N^{-1} S_N$, where 
$S_N = \sum_{n=1}^N \xi_n$ and $\sigma^2_N = \text{Var}(S_N)$, 
converge weakly to the standard normal distribution $\mathcal{N}_1$ as $N \to \infty$. Initially established for independent and identically distributed (i.i.d.) variables, the CLT has been extended to martingales \cite{levy1935}, strongly mixing sequences \cite{rosenblatt1956central}, and chaotic dynamical systems \cite{bunimovic1974central, keller1980}, among other dependent processes.
The accuracy in the approximation of $\sigma_N^{-1} S_N$ by $\cN_1$ is quantified by the
Berry--Esseen theorem \cite{berry1941accuracy, esseen1942liapunov},
which, in the case of i.i.d. variables $\xi_n$ with $\bfE[ |\xi_1|^3 ] < \infty$, asserts that
\begin{align}\label{eq:univ_berry}
	\sup_{ x \in \bR } | \bfP(  \sigma^{-1}_N S_N \le x ) -  \cN_1(  (-\infty, x]  ) | = O(  \bfE( |\xi_1|^3 ) N^{-1/2} ).
\end{align}
Such error bounds have also been extended to various dependent processes. In the case of dynamical systems, early results in this direction include \cite{RE83} for piecewise expanding interval maps and \cite{CP90} for subshifts of finite type.

Multivariate extensions of \eqref{eq:univ_berry} were obtained in the classical works \cite{N76, B86}. Let $W = \sum_{n=1}^N Y^n$, where $Y^n$ are $\mathbb{R}^d$-valued random vectors with $\text{Cov}(W) = \mathbf{I}_{d \times d}$. Set  
$
\beta_3 = \sum_{n=1}^N \mathbf{E}[ \| Y^n \|^3 ] < \infty,
$
where $\Vert x \Vert$ denotes the Euclidean norm of a vector $x \in \mathbb{R}^d$. For i.i.d. summands $Y^n$, Bentkus \cite{B86, bentkus2003dependence} established the estimate  
\begin{align}\label{eq:bentkus}
d_c( \mathcal{L}(W), \mathcal{N}_d ) = O ( d^{1/4}  \beta_3 ) \quad \text{as } N \to \infty,
\end{align}
for the non-smooth metric  
\begin{align}\label{eq:metric}
d_c( \mathcal{L}(W), \mathcal{N}_d ) := 
\sup_{C \in \mathcal{C}} 
| \mathbf{P}(W \in C) - 
\mathcal{N}_d
( C ) |,
\end{align}
where $\mathcal{N}_d$ denotes the $d$-dimensional standard multivariate normal distribution, and $\mathcal{C}$ is the class of all convex subsets of $\mathbb{R}^d$. The result is a natural extension of \eqref{eq:univ_berry} to the multivariate setting. To date, \eqref{eq:bentkus} remains the best known bound in terms of $d$ for general independent variables. 
G\"{o}tze \cite{gotze1991rate} used Stein's method combined with induction to derive  
$
d_c( \mathcal{L}(W), \mathcal{N}_d ) = O ( d \beta_3 ) 
$
for independent (not necessarily identically distributed) summands. More recently, building on the arguments of Bentkus and G\"{o}tze in \cite{bentkus2003dependence, gotze1991rate}, Rai{\v{c}} \cite{raic2019multivariate} established a certain generalization of \eqref{eq:bentkus} in the case of independent summands. 

Beyond the independent case, variants of \eqref{eq:bentkus} were derived for bounded locally dependent 
random vectors by Rinott and Rotar \cite{rinott1996multivariate}, Fang and R\"{o}llin \cite{fang2015rates},
and Fang \cite{fang2016multivariate}, with 
applications to normal approximation for certain graph related statistics. In particular,  \cite{fang2016multivariate} established 
$d_c( \cL(W), \cN_d ) = O( d^{1/4} N \beta^3 )$ in the case of 
decomposable random vectors with $\Vert Y^n \Vert_\infty \le \beta$, where the dependence structure is described in terms of certain dependency neighborhoods.
In this bound the constant grows 
(polynomially) as the \say{size} of the dependency neighborhood increases.

In this work, our main objective is to develop a version of Fang's approach \cite{fang2016multivariate} suitable for 
multivariate normal approximation with respect to non-smooth metrics such as $d_c$,
in the case of processes 
generated by dynamical systems with good mixing properties. We study the problem in a setting of time-dependent dynamical systems
of the form $Y^n = \psi_n \circ T_n \circ \cdots \circ T_1$, where 
$T_n : M \to M$ is a full-branch Gibbs--Markov map of a bounded metric space $M$, and 
$\psi_n : M \to \bR^d$ is a regular function. A distinctive feature of the approach
described here is that it 
allows us to essentially reduce the problem of 
estimating $d_c( \cL(W), \cN_d)$ to a set of correlation decay conditions.

Fourier analytic techniques \cite{CP90, gouezel2004berry, fernando2021edgeworth} and martingale approximations \cite{dedecker2008mean, liu2024wasserstein, antoniou2019rate} have been successfully adapted to obtain univariate Berry--Esseen bounds in the spirit of \eqref{eq:univ_berry}, along with other quantitative refinements of the CLT, for a wide range of 
measure-preserving hyperbolic systems. Extensions of these techniques to time-dependent systems described by compositions 
$T_n \circ \cdots \circ T_1$, where the maps $T_n$ vary either deterministically or randomly, have also been explored in several works, some of which are mentioned in the following.  
In the sequential (nonrandom) setting, the CLT was studied for piecewise uniformly expanding systems in one and higher dimensions in \cite{conze2007, haydn2017, B94-1, B94-2}. A bound similar in spirit to \eqref{eq:univ_berry} was obtained in \cite{heinrich1996mixing} in a self-normed CLT for sequential compositions of piecewise uniformly expanding interval maps, with rate $O(N^{-1/2})$ under the assumption of linear growth of variance. More recent works on error bounds in univariate CLTs for piecewise uniformly expanding and hyperbolic sequential systems include \cite{H20, dolgopyat2024rates, dedecker2022rates, liu2023wasserstein}. In the very recent work \cite{DL25}, Berry--Esseen-type bounds were obtained for sequential dispersing billiards. 
Quenched CLTs were established for random subshifts of finite type and expanding maps in \cite{K98}, and more recently for various random hyperbolic dynamical systems in \cite{dragicevic2018almost, A15, K08, ALS09, NTV18}, among others.
In the recent work \cite{dragicevic2020limit}, quenched Berry--Esseen bounds were derived for a broad class of piecewise uniformly expanding and hyperbolic systems, assuming ergodicity of the base transformation.

Certain correlation-decay criteria for a rate of convergence 
$d_{\cK}( \cL(W),  \cN_d  ) =  O(N^{-1/2})$ in the multivariate CLT
with respect to the Kantorovich (or Wasserstein-$1$) distance 
$$d_{\cK}( \cL(W),  \cN_d  ) = \sup_{ \Vert h \Vert_{ \text{Lip}  \le 1 }  }  | \bfE[  h(W)  ] -  \cN_d[h] |$$
of Lipschitz continuous test functions
were given  in \cite{pene2005rate} based on an approach due to Rio \cite{rio1996sur}.
The result applies to a broad class of hyperbolic measure-preserving
dynamical systems, including Sinai billiards \cite{pene2005rate}, Axiom A diffeomorphisms \cite{xia2011multidimensional}, 
and Pomeau--Manneville type interval maps \cite{leppanen2017functional}.
Very recently, martingale approximations were used in \cite{P24} to prove 
rates of convergence
with respect to $d_{\cK}$ in the multivariate functional CLT for nonuniformly hyperbolic maps and flows.
The present work is partly based on \cite{hella2020stein, leppanen2020sunklodas}, where an adaptation of Stein's method for smooth metrics such as $d_{\cK}$ was developed in a 
dynamical systems setting. We are not aware of any previous error bounds 
on the distance $d_c$ in CLTs for dynamical systems.
We emphasize that, due to the inductive step 
in \cite{fang2016multivariate, raic2019multivariate, gotze1991rate}
that is used to estimate 
\eqref{eq:metric} through Stein's method, the results of this paper are not a direct consequence of \cite{hella2020stein, leppanen2020sunklodas} but require the development of new ideas. To conclude, we mention that in the different dynamical systems setting of Poisson approximation related to hitting time statistics for shrinking sets, 
Stein's method has been implemented in \cite{gordin2012poisson,denker2004poisson, haydn2016entry, haydn2013entry}.

 \subsection*{Organization and notation}
 The paper is organized as follows. In Section \ref{sec:results}, we define the model
 to be studied in the rest of the paper and state our results. 
 We also provide an outline of the strategy used to prove the main 
 result on normal approximation.
 In Section \ref{sec:prelim}, we review preliminaries related to Stein's method, smoothing, 
 and correlation decay properties of the dynamical system under consideration. 
 In Section \ref{sec:proof}, we prove our main result. Appendix \ref{sec:ml_proof} contains 
 proofs of two correlation decay estimates
 stated in Section 
 \ref{sec:prelim}.
 
 Throughout the paper, we denote by 
 $\Vert x \Vert$ the Euclidean norm of a vector $x \in \bR^d$, and by $\Vert A \Vert_s = \sup \{ \Vert Ax \Vert \: : \Vert x \Vert =1  \}$
 the spectral norm 
 of a matrix $A \in \bR^{d \times d}$. 
 For $1 \le r,s \le d$, we write $x_r$ for the $r$th component of $x$, and 
 $[A]_{r,s}$ for the $(r,s)$-entry of $A$.
 Moreover, $\lambda_{\min}(A)$ and 
 $\lambda_{\max}(A)$ denote respectively 
 the minimum and maximum eigenvalue of $A$. For a function $f : X \to \bR$ 
 defined on a measure space $(X, \cB, \mu)$, we write $\mu(f) = \int_X f \,d \mu$.

\section{Setting and statement of main result}\label{sec:results}

\subsection{A time-dependent expanding dynamical system}\label{sec:model}

Let $(M,d)$ be a metric space with $\text{diam}(M) \le 1$. We endow $M$ with its Borel sigma-algebra $\cB$. 
Suppose that $\lambda$ is a probability measure on $\cB$. We denote by $\cM$ the collection of all 
transformations $T : M \to M$ which admit a countable\footnote{In this paper, 
countable means finite or countably infinite.}  
measurable partition $\cA_1(T)$ of $M$, such that for each $a \in \cA_1(T)$, the map $T :a  \to M$ is a measurable bijection.

We consider sequences  $(T_n)$  of maps in $\cM$.
Time-dependent compositions 
along the given sequence are denoted as follows:
\begin{align*}
	\cT_{\ell , k} &:= T_k \circ \cdots \circ T_\ell, \quad 
	\cT_{k} := \cT_{1, k},
\end{align*}
where the convention is that $\cT_{\ell , k} = \id_M$ whenever $k < \ell$.
For each $k, n \ge 1$, define 
$$
\cA( \cT_{k, k + n - 1 } ) = \bigvee_{i=0}^{n-1} \cT_{k, k + i-1}^{-1} \cA_1( T_{k + i} ).
$$
That is, $\cA( \cT_{k, k} ) = \cA_1(T_k)$ and, for $n \ge 2$,  $\cA( \cT_{k, k + n -1} )$ consists of \say{cylinder} sets of the form 
$$
A_k \cap \cT_{k,k}^{-1} A_{k+1} \cap \cdots \cap  \cT_{k, k + n - 2}^{-1} A_{k + n -1}, \quad A_{i} \in \cA_1(  T_{i} ).
$$
For each $1 \le j \le k$, define 
\begin{align*}
	\Lambda_{j, k} = \inf_{a \in \cA( \cT_{j,k} )} \inf_{ \substack{  x,y \in a \\ x \neq y }} \frac{d(  \cT_{j,k} x,  \cT_{j,k} y  ) }{d(x,y)}.
\end{align*}
Given $\psi : M \to \bR$ and $\alpha \in (0,1]$, set 
\begin{align*}
	| \psi  |_\alpha = \sup_{x \neq y} \frac{ |  \psi (x) -  \psi (y) | }{d(x,y)^\alpha}, \quad 
	\Vert u \Vert_\infty = \Vert u \Vert_\infty + 	| u |_\alpha,
\end{align*}
and if $\psi \ge 0$,
\begin{align*}
	|\psi |_{\alpha, \ell} = | \log \psi |_\alpha = \sup_{x \neq y} \frac{ |  \log \psi (x) -  \log \psi (y) | }{d(x,y)^\alpha},
\end{align*}
where we adopt the conventions $\log 0 = - \infty$ and $\log 0 - \log 0 = 0$.

\begin{remark} For any $\psi : M \to \bR_+$, 
	\begin{align}\label{eq:psi_lb_ub}
		e^{ - |  \psi  |_{\alpha , \ell } } \int_M \psi \, d \lambda \le \psi \le 	e^{ |  \psi  |_{\alpha , \ell } } \int_M \psi \, d \lambda.
	\end{align}
	Note that, by the mean value theorem,
	\begin{align*}
		|\psi(x) - \psi(y)| 
		&\le \Vert \psi \Vert_\infty |  \log \psi (x) - \log \psi (y) |
		\le e^{ |  \psi  |_{\alpha , \ell } } \int_M \psi \, d \lambda 
		\cdot  |  \psi  |_{\alpha , \ell } d(x,y)^\alpha,
	\end{align*}
	which gives
	\begin{align}\label{eq:from_lip_to_ll}
		| \psi |_\alpha  \le  | \psi |_{\alpha , \ell} e^{  |  \psi |_{\alpha , \ell} } \int_M \psi \, d \lambda.
	\end{align}
	In the opposite direction, we have that
	\begin{align}\label{eq:from_ll_to_lip}
		| \psi |_{\alpha , \ell } \le (  \inf_M \psi )^{-1}  |  \psi  |_{\alpha }.
	\end{align}
\end{remark}

We assume that the sequential compositions $\cT_{\ell , k}$ are uniformly expanding with bounded distortions
in the following sense:

\noindent\textbf{Assumptions (UE).}
\begin{itemize}[leftmargin=\dimexpr\lmar+\parindent+\labelsep] 
	\item[(UE:1)]  There exist $p \ge 1$ and $\Lambda >1$ such that
	\begin{align*}
		\Lambda_{j, p+j-1}  \ge  \Lambda  \quad \forall j \ge 1.
	\end{align*}	
	\item[(UE:2)] There exists $K' \ge 1$ such that, for all  $j \ge 1$, and all $1 \le \ell < p$,
	\begin{align*}
		d(  x,   y ) \le K' d(  \cT_{j, j + \ell - 1}x, \cT_{j, j + \ell - 1} y  ) \quad \forall x,y \in a, \, \forall a \in \cA( \cT_{j, j + \ell - 1} ).
	\end{align*}
	\item[(UE:3)] There exists $K > 0$ such that 
	\begin{align*}
		\zeta_a^{(j, j + k - 1)} = \frac{ d ( \cT_{j, j + k - 1}  )_* ( \lambda |_a ) }{d  \lambda } \quad \text{satisfies} \quad 
		| \zeta_a^{(j, j + k - 1)}  |_{\alpha , \ell  } \le K.
	\end{align*}
	for all  $a \in \cA(\cT_{j, j + k - 1})$, and all $j,k \ge 1$. \smallskip 
\end{itemize}

Basic examples of maps satisfying (UE:1-3) are given by \say{folklore} piecewise smooth expanding maps of the unit interval.

\begin{exa}[Piecewise expanding interval maps]\label{exa:pw}
	Let $\lambda$ be the Lebesgue measure on $I := [0,1]$.
	Denote by $\cE_{a,B}$ the family of all maps $T: I \to I$ with the following properties:
\begin{itemize}
\item[(i)] There exists a countable (mod $\lambda$) partition $\cA_1(T) = \{I_j\}$ of $I$ into open sub-intervals $I_j$ 
such that $T$ can be extended to a $C^2$ diffeomorphism $T_j : \bar{I}_j \to I$
on the closure $\bar{I}_j$ of each $I_j$; \smallskip 
\item[(ii)] $\sup_{x \in I} | T''(x) | / (T'(x))^2 \le B < \infty$; \smallskip 
\item[(iii)] $\inf_{x \in I} |T'(x)| \ge a > 0$.
\end{itemize} 
	Then, $(T_k)$, $T_k \in \cE_{a,B}$, satisfies (UE:1-3) provided that there exist $p \ge 1$ and $\Lambda > 1$ such that 
	\begin{align}\label{eq:unif_exp}
	\inf_{x \in I} | ( \cT_{j, p + j -1}  )'(x) | \ge \Lambda \quad \forall j \ge 1.
	\end{align}
	Indeed, (UE:1) and (UE:2) are clear by \eqref{eq:unif_exp} and (iii), respectively, and 
	(UE:3) follows by a standard computation. Namely, denoting 
	$\tilde{T}_\ell = T_{j + \ell - 1}$ and 
	$\tilde{\cT}_{\ell, k } = \tilde{T}_{k} \circ \cdots \circ \tilde{T}_{\ell}$, we have	
	\begin{align*}
		\biggl| \frac{ \cT_{j, j + k -1}''(x) }{  ( \cT_{j, j + k -1}'(x) )^2 } \biggr|
		&= \biggl| \sum_{ \ell = 1  }^k \frac{  \tilde{T}_\ell''(  \tilde{\cT}_{1, \ell - 1}(x) )  }{ ( \tilde{T}_\ell' (  \tilde{\cT}_{1, \ell - 1}(x) ) )^2  } \cdot \frac{1}{  \tilde{\cT}_{\ell + 1, k }'(  \tilde{\cT}_{1, \ell } (x)  )  } \biggr|
		\le B \sum_{\ell=1}^k \frac{1}{ | \tilde{\cT}_{\ell + 1, k }'(  \tilde{\cT}_{1,\ell} (x)  )  | } \\
		&\le B \min\{1, a\}^{-p} \Lambda \sum_{\ell = 1}^k \Lambda^{ - ( k - \ell  ) / p } \le C_* < \infty,
	\end{align*}
	where (ii) was used in the second-to-last inequality, and \eqref{eq:unif_exp} and (iii) were used in the last inequality. Letting $x_a \in a$ and $y_a \in a$ denote respectively the unique preimages 
	of $x$ and $y$ under $\cT_{j, j + k - 1}$, $a \in \cA(\cT_{j, j + k - 1})$, it follows that
	\begin{align*}
		&| \log \zeta_a^{(j, j + k - 1)}(x)  - \log \zeta_a^{(j, j + k - 1)}(y)  | \\
		&= | \log ( \cT_{j, j + k - 1} )'(x_a) - \log ( \cT_{j, j + k - 1} )'(y_a) | \\ 
		&\le  \int_{ [x_a,y_a] }  \biggl| 
		\frac{ \cT_{j, j + k -1}''(t) }{  \cT_{j, j + k -1}'(t) }
		\biggr| \, dt \le \int_{ [x,y] }  \biggl \Vert
		\frac{ \cT_{j, j + k -1}'' }{  ( \cT_{j, j + k -1}'   )^2 }
		\biggr \Vert_\infty \, dt  \le C_* |x - y|.
	\end{align*}
\end{exa}

\subsection{Main result} For  $\alpha \in (0,1]$ and $A > 0$, denote by $\cD_{\alpha, A}$ the class of all densities $\rho$ such that $| \rho |_{\alpha, \ell} \le A$. Let $\mu$ be a probability measure whose 
density $\rho$ lies in $\cD_{\alpha, A}$, and let $(\varphi_n)_{n \ge 1}$ be a sequence of functions 
$\varphi_n  : M \to \bR^d$, $d \ge 1$, such that 
\begin{align}\label{eq:varphi_cond}
\mu (\varphi_n \circ \cT_n ) = 0 \quad \text{and} \quad \Vert \varphi_n \Vert_{ \alpha } \le L \quad \forall n \ge 1,
\end{align}
where we assume that $L \ge 1$.
Note that the first of these two properties can be always recovered by centering. Namely, if 
$\psi_n : M \to \bR^d$ satisfies $\Vert \psi_n \Vert_{\alpha} \le L$, then for $\bar{\psi}_n : = \psi_n - \mu( \psi_n \circ \cT_n)$ 
we have that $\mu( \bar{\psi}_n \circ \cT_n ) = 0$ and $\Vert \bar{\psi }_n \Vert_{\alpha} \le 2L$.

For $N \ge 1$ and $0 \le \delta_1 \le \delta_2 \le 1$, define 
\begin{align}\label{eq:S_deltas}
&S_N( \delta_1, \delta_2) = \sum_{ \delta_1 N \le n < \delta_2 N } \varphi_n \circ \cT_n, \quad 
S_N = S_N(0,1).
\end{align}
We consider these quantities as random vectors on the probability space $(M, \cB, \mu)$. Further, we set 
\begin{align}\label{eq:Sigma_deltas}
&\Sigma_N( \delta_1, \delta_2 ) = \mu(  S_N( \delta_1, \delta_2 ) \otimes  S_N( \delta_1, \delta_2 )  ), \quad 
\Sigma_N = \Sigma_N( 0, 1),	
\end{align}
and
\begin{align}\label{eq:W_N}
W_N(\delta_1, \delta_2) = \Sigma_N^{-1/2}(\delta_1, \delta_2)  S_N( \delta_1, \delta_2 )  , \quad 
W_N = W_N(0, 1),
\end{align}
provided that $\Sigma_N(\delta_1, \delta_2)$ is invertible. 

The following theorem, which is our main result, gives an estimate 
on the distance between the law of $W$ and $\cN_d$ in the sense of 
the non-smooth metric
$d_c$ defined in \eqref{eq:metric}. The estimate holds under a condition 
which roughly stipulates that the eigenvalues of 
$\Sigma_N(\delta_1, \delta_2)$ have the same order of growth as  $N \to \infty$.

\begin{thm}\label{thm:main} 	
Let $N \ge 1$, and let	$(T_n)$ be a sequence of transformations satisfying (UE:1-3).
Suppose that the density of $\mu$ belongs to $\cD_{\alpha, A}$,
and that \eqref{eq:varphi_cond} holds. Moreover, suppose that $\Sigma_N$ is invertible, 
and that for some constants $C_0, C_0' \ge 1$ and $K_0 \ge 0$ 
the following conditions hold for all $0 \le \delta_1 \le \delta \le  \delta_2 \le 1$:
\begin{itemize}
	\item[(C1)]  if $| \delta_2 - \delta | \ge | \delta - \delta_1 |$,
	$$\lambda_{ \max }(   \Sigma_N (  \delta_1, \delta_2  )  ) 
	\le  
	\max \{  C_0' | \delta_2 - \delta_1 |^{-K_0}  ,  
	C_0 \lambda_{ \min }(   \Sigma_N (  \delta  , \delta_2  )  )      \};$$
	\item[(C2)] if $| \delta_2 - \delta | < | \delta - \delta_1 |$,
	$$\lambda_{ \max }(   \Sigma_N (  \delta_1, \delta_2  )  ) 
	\le  
	\max \{  C_0' | \delta_2 - \delta_1 |^{-K_0}   ,  
	  C_0 \lambda_{ \min }(   \Sigma_N (  \delta_1 , \delta   )  )      \}.$$ 
\end{itemize}
Then, there exists a constant $\bfC$ whose value is determined by $A, \alpha, K, K', \Lambda$, such that
$$
d_c( \cL( W_N ), \cN_d ) \le    ( d^{13/4} \bfC 2^{3K_0 / 2}
C_0^{3/2}  L^5 + 2 (C_0')^{3/2} )  \max \{ N \lambda_{ \min }^{-3/2} (  \Sigma_N  ), \lambda_{ \min }^{-1/2} (  \Sigma_N  ) \}.
$$
In particular, if $\lambda_{ \min }^{-1} (  \Sigma_N  ) = o(N^{-2/3})$, then $\cL(W_N) \stackrel{D}{\to} \cN_d$ as $N \to \infty$, where $\stackrel{D}{\to}$ 
denotes convergence in distribution.
\end{thm}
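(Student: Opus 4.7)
The plan is to extend the Stein's-method approach of G\"{o}tze, Bentkus, Rai\v{c}, and Fang to the weakly dependent dynamical setting. First, for each convex $C$ I would approximate $\mathbf{1}_C$ by a regularized surrogate $h_{C, \ve}$ satisfying $|\cN_d[\mathbf{1}_C - h_{C, \ve}]| \lesssim d^{1/2} \ve$, solve the Stein equation \eqref{eq:steiner} to obtain a smooth $f = f_{h_{C,\ve}}$, and thereby reduce the problem to estimating
\[
\sup_{C, \ve} |\bfE[\Delta f(W_N) - W_N^T \nabla f(W_N)]|
\]
up to the smoothing error. Because $h_{C,\ve}$ is only piecewise regular, the standard bounds on the derivatives of $f_{h_{C,\ve}}$ bring back the quantity $d_c(\cL(W_N), \cN_d)$ on the right-hand side, producing a self-bounding inequality of the form $d_c(\cL(W_N), \cN_d) \le \tfrac12 d_c(\cL(W_N), \cN_d) + (\text{error})$, which is the well-known mechanism by which Berry--Esseen type bounds for the class of convex sets are obtained by induction on $N$.

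The technical heart of the argument is the new decomposition of $\bfE[\Delta f(W_N) - W_N^T \nabla f(W_N)]$ advertised in Section \ref{sec:Ri_decomp}. Starting from the identity $W_N = \Sigma_N^{-1/2} \sum_n \varphi_n \circ \cT_n$, I would isolate, for each summand indexed by $n$, a short \emph{bridge block} of $b = b(N)$ consecutive indices straddling $n$, and Taylor-expand $f$ around the partial sum with this block removed. The second-order term of the expansion is matched against $\bfE[\Delta f(W_N)]$ via Stein's identity, leaving two kinds of residuals: a \emph{local} term built from triple products $(\varphi_i \circ \cT_i)(\varphi_j \circ \cT_j)(\varphi_k \circ \cT_k)$ with $i,j,k$ in a window of length $b$, and \emph{decorrelation} terms coupling a functional of the past block (measurable with respect to $\cT_{n-b}$) against a functional of the future block (measurable with respect to $\cT_{n+b}, \cT_{n+b+1}, \ldots$). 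The local term is controlled by Cauchy--Schwarz in terms of $L$ and $b/\lambda_{\min}^{3/2}(\Sigma_N)$; the decorrelation term is handled by Lemma \ref{lem:decor}, which I would establish by importing the coupling construction of Korepanov, Kosloff, and Melbourne \cite{korepanov2019explicit} applied under (UE:1--3) to get an exponential mixing rate $\theta^b$, $\theta \in (0,1)$. Choosing $b \sim \log N$ then makes the decorrelation contribution negligible.

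The main obstacle will be closing the induction in the weakly dependent regime. In the local dependence framework of \cite{fang2016multivariate}, removing the dependency neighborhood of a single summand yields a genuinely independent vector with an explicit covariance, and the induction hypothesis applies directly; in our setting, however, the analogous recentered sub-sum $\Sigma_N^{-1/2}(\delta_1, \delta_2) S_N(\delta_1, \delta_2)$ is itself a non-stationary, weakly dependent process of the same type, and the induction has to be invoked on such sub-problems. Conditions (C1) and (C2) are designed precisely for this step: after excising a bridge block, at least one of the two surviving halves $[\delta_1, \delta]$ or $[\delta, \delta_2]$ has length comparable to $\delta_2 - \delta_1$, and these conditions force $\lambda_{\max}(\Sigma_N(\delta_1, \delta_2))$ to be controlled by the minimum eigenvalue on that surviving half, so that the recursive estimate does not blow up. Balancing the smoothing error $\sim d^{1/2}\ve$, the local error $\sim d^{4} L^{5} b N / \lambda_{\min}^{3/2}(\Sigma_N)$, and the decorrelation error $\sim N \theta^b$ against the self-bounding coefficient $\tfrac12$, with an optimized choice of $\ve$ and $b$, is expected to yield the quoted bound $\max \{ N \lambda_{\min}^{-3/2}(\Sigma_N), \lambda_{\min}^{-1/2}(\Sigma_N) \}$, and getting the polynomial dependence on $d$, $L$, $C_0$, $C_0'$ right across all three layers is where the bookkeeping will be delicate.
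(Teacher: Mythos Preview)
Your high-level scaffolding (smoothing $\mathbf 1_C\approx h_{C,\ve}$, solving the Stein equation, and closing a self-bounding inequality using (C1)--(C2) to pass to sub-intervals) matches the paper. However, the concrete decomposition you describe --- a single ``bridge block'' of fixed width $b\sim\log N$ around each index $n$, with a local third-order term of size $\sim b/\lambda_{\min}^{3/2}$ and a decorrelation remainder $\sim\theta^b$ --- is \emph{not} what the paper does, and it would not deliver the stated bound. That scheme is essentially Fang's locally-dependent framework with dependency neighborhoods of size $b$; the paper remarks explicitly (end of the introduction) that applying \cite{fang2016multivariate} directly in this setting yields sub-optimal rates in $N$. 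Concretely, your local term after summing over $n$ carries a polynomial-in-$b$ prefactor, so $b\sim\log N$ leaves polylogarithmic losses that the theorem does not allow.

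The paper instead starts from the Sunklodas-type decomposition (Lemma~\ref{lem:sunklodas}) which sums over \emph{all} gap sizes $m=0,\ldots,N-1$, producing seven terms $E_1,\ldots,E_7$. Each $E_i$ is written as $\int_0^1 R_i(\tau)\,d\tau$ via the explicit Stein solution, the $\tau$-integral is split at $\ve^2$ (using \eqref{eq:diff2_1}--\eqref{eq:diff3_1} differently on the two ranges), and on $[\ve^2,1]$ each $R_i$ is telescoped a second time over nested blocks $m,2m,3m,\ldots$ (the $S_{i,j}$ terms in Section~\ref{sec:Ri_decomp}). The decorrelation input is then Lemma~\ref{lem:decor}, which bounds correlations like $\mu\{\overline{\eta^{n,m+k,m+k+\ell}}\,F^{n,m,m}_{r,s,t}\}$ by a quantity that both decays like $q_*^{k/2}$ \emph{and} carries the shell estimate of Lemma~\ref{claim:A_global_estimates}; summing in $m,k,\ell$ then converges with no $\log N$ loss. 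Finally, the recursion is not an induction on $N$: the paper defines $\fD=\fD(N)$ as the supremum over sub-intervals $[\delta_1,\delta_2]$ of the suitably normalised $d_c$, and the shell estimate (Lemma~\ref{claim:A_global_estimates}) --- not derivative bounds on $f_{h_{C,\ve}}$ per se --- feeds $\fD$ back into the right-hand side via $\mu(W\in D^{a_1}\setminus D^{-a_2})\le d_c+\cN_d(D^{a_1}\setminus D^{-a_2})$, after which a choice $\ve\sim d^{13/4}C_0^3L^5\bar b$ gives $\fD\le\text{const}+\tfrac12\fD$.
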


\begin{remark} The proof shows a slightly stronger conclusion. Namely that, under the assumptions of Theorem \ref{thm:main},
\begin{align*}
d_c( \cL( W(\delta_1, \delta_2 ) ), \cN_d )
&\le ( \delta_2 - \delta_1 )^{3K_0/2} C_* \max \{ N \lambda_{ \min }^{-3/2} (  \Sigma_N(\delta_1, \delta_2)  ), 
\lambda_{ \min }^{-1} (  \Sigma_N (\delta_1, \delta_2)  )  \}  , \\
C_* &=   d^{13/4}  \bfC  2^{3K_0 / 2}
C_0^{3/2}  L^5 + 2 (C_0')^{3/2}
\end{align*}
holds for all $0 \le \delta_1 \le \delta_2 \le 1$, whenever $\Sigma_N(\delta_1, \delta_2)$ is invertible.
Conditions (C1) and (C2) are related to the inductive method 
used to derive the upper bound on 
$d_c( \cL( W ), \cN_d )$, which involves controlling the ratio 
$\lambda_{ \max }(   \Sigma_N (  \delta_1, \delta_2  )  ) / \lambda_{ \min }(   \Sigma_N (  \delta  , \delta_2  )  )$
for varying $\delta \in [\delta_1, \delta_2]$; see the proof of Lemma \ref{lem:global_estimates} for details. 
The condition is not optimal, but rather a choice of convenience formulated with 
slowly or randomly varying transformations in mind. An application of the latter 
type is given in Theorem \ref{thm:random} below.
\end{remark}

\subsection{Random dynamical systems} We combine \cite[Theorem 4.1]{hella2020quenched}
with 
Theorem \ref{thm:main} to derive an error bound in the quenched multivariate 
central limit theorem  
for 
random expanding dynamical systems with a strongly mixing base transformation. To define the model, 
let $(\Omega_0, \cF_0)$ be a measurable space, and let $\bfP$ be a probability measure 
on the product space $(\Omega, \cF) = ( \Omega_0^{ \bN }, \cF_0^{ \bN } )$, 
where $\bN = \{1,2,\ldots\}$. Expectation with respect to $\bfP$ is denoted by $\bfE$.
We 
assume that the shift transformation $\tau : \Omega \to \Omega$, $(  \tau \omega )_k = \omega_{k+1}$,
preserves $\bfP$, and that, associated to each $\omega \in \Omega$ is a sequence 
of maps $( T_{ \omega_n})$ from the family $\cM$. Given $\omega \in \Omega$, for any $n \ge 1$  we write 
$
\cT_{\omega, n} = T_{\omega_n} \circ \cdots \circ T_{\omega_1}.
$
We then consider a random dynamical system specified by the following assumptions.

\noindent\textbf{Assumptions (RDS).}
\begin{itemize}[leftmargin=\dimexpr\lmar+\parindent+\labelsep] 
\item[(RDS:1)] The map $(\omega, x) \mapsto T_{\omega_n} \circ \cdots \circ T_{\omega_1}(x)$ is measurable between 
$\cF \otimes \cB$ and $\cB$ for any $n \ge 0$.  \smallskip 
\item[(RDS:2)] The random selection process is strongly mixing with rate $O(n^{ - \gamma })$, where $\gamma > 0$. That is, 
for some constant $C > 0$,
$$
\sup_{i \ge 1} \alpha ( \cF_1^i, \cF_{i+n}^\infty  ) \le C n^{- \gamma } \quad \forall n \ge 1,
$$
where $\cF_1^i$ is the sigma-algebra on $\Omega$ generated by the projections $\pi_1, \ldots, \pi_i$, $\pi_k(\omega) = \omega_k$;
$\cF_{i+n}^\infty$ is the sigma-algebra generated by $\pi_{i+n}, \pi_{i + n + 1}, \ldots$; and 
$$
\alpha ( \cF_1^i, \cF_{j}^\infty  ) = \sup_{ A \in \cF_1^i, \, B \in \cF_j^\infty  } |  \bfP(A \cap B)  - \bfP(A) \bfP(B) |.
$$
\item[(RDS:3)] There exist $p \ge 1$ and $\Lambda > 1$, and $K' \ge 1$, such that, for $\bfP$-a.e. $\omega \in \Omega$,
$$
d( \cT_{\omega, p} (x), \cT_{\omega, p} (y)  ) \ge \Lambda d(x,y) \quad \forall x,y \in a, \, \forall a \in \cA( \cT_{p} ),
$$
and if $1 \le \ell < p$, then for $\bfP$-a.e. $\omega \in \Omega$,
\begin{align*}
d(  x,   y ) \le K' d( \cT_{\omega, \ell } ( x), \cT_{\omega, \ell} (x)  ) \quad \forall x,y \in a, \, \forall a \in \cA( \cT_{\ell} ).
\end{align*}
\item[(RDS:4)] There exists $K > 0$ such that for $\bfP$-a.e. $\omega \in \Omega$,
\begin{align*}
\zeta_{\omega, a}^{(k)} = \frac{ d ( \cT_{\omega, k}  )_* ( \lambda |_a ) }{ d \lambda } \quad \text{satisfies} \quad 
| \zeta_{\omega, a}^{(k)}  |_{\alpha, \ell  } \le K,
\end{align*}
whenever $a \in \cA( \cT_k)$.
\end{itemize}

Since $\tau$ preserves $\bfP$, assumptions (RDS:3-4) are equivalent to saying that (UE:1-3) hold for $\bfP$-a.e. $\omega \in \Omega$.

Given a probability measure $\mu$ on $\cB$ together with a function $\varphi : M \to \bR^d$, $d \ge 1$, we set 
$
\varphi_{\omega, n} = \varphi - \mu( \varphi \circ \cT_{\omega, n})
$
for each $n \ge 1$, and define 
$$
S_N(\omega) = \sum_{n=0}^{N-1}  \varphi_{\omega, n} \circ \cT_{\omega, n}, \quad \Sigma_N(\omega) = \mu(  S_N(\omega) \otimes S_N(\omega) ), \quad W_N(\omega) =   \Sigma_N^{-1/2}(\omega) S_N(\omega),
$$
provided that $\Sigma_N(\omega)$ is invertible. 

\begin{thm}\label{thm:random} Consider a random dynamical system satisfying (RDS:1-4).
	Suppose that 
	the density of $\mu$ belongs to $\cD_{\alpha, A}$,  and 
	that $\Vert \varphi \Vert_\alpha < \infty$
	together with the following condition holds.
	\begin{itemize}
		\item[(V)] $\sup_{N \ge 1} \bfE[   v^T \Sigma_N  v  ] = \infty$ for each unit vector $v \in \bR$.
	\end{itemize}
	Then, for $\bfP$-a.e. $\omega \in \Omega$,
	$$
	d_c( \cL( W_N(\omega) ), \cN_d ) = O ( d^{13/4} N^{-1/2}  )  \quad \text{as $N \to \infty$.}
	$$
\end{thm}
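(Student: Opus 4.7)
The plan is to combine Theorem~\ref{thm:main}, applied for fixed $\omega$, with the covariance growth estimate of \cite[Theorem~4.1]{hella2020quenched}. Because $\tau$ preserves $\bfP$, assumptions (RDS:3)-(RDS:4) are equivalent to (UE:1)-(UE:3) holding on a full-measure set $\Omega_1$ with the same deterministic constants $\Lambda, K, K'$. For $\omega \in \Omega_1$ the centered observables $\varphi_n = \varphi - \mu(\varphi \circ \cT_n)$ automatically satisfy $\mu(\varphi_n \circ \cT_n) = 0$ and $\Vert \varphi_n \Vert_\alpha \le 2L$, so the hypothesis \eqref{eq:varphi_cond} is in force with the single observable $\varphi$ promoted to the sequence $(\varphi_n)$.

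The central analytic input is \cite[Theorem~4.1]{hella2020quenched}. Under (RDS:1)-(RDS:4) together with condition (V), that result produces for $\bfP$-a.e. $\omega$ a positive definite limit $\bar{\Sigma}$ (arising from the annealed scheme) such that $N^{-1} \Sigma_N \to \bar{\Sigma}$. The same mechanism—polynomial mixing (RDS:2) driving a Birkhoff-type ergodic argument along $\tau$—applies to truncated sums and yields random constants $0 < c(\omega) \le C(\omega) < \infty$ and $N_0(\omega) < \infty$ with
\[
c(\omega)(\delta_2 - \delta_1) N \le \lambda_{\min}(\Sigma_N(\delta_1, \delta_2)) \le \lambda_{\max}(\Sigma_N(\delta_1, \delta_2)) \le C(\omega)(\delta_2 - \delta_1) N
\]
for all pairs $0 \le \delta_1 < \delta_2 \le 1$ with $(\delta_2 - \delta_1) N \ge N_0(\omega)$. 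The uniform-in-$(\delta_1, \delta_2)$ version is obtained from pointwise convergence on a countable dense family of rationals together with the near-additivity $\Sigma_N(\delta_1, \delta_2) = \Sigma_N(\delta_1, \delta) + \Sigma_N(\delta, \delta_2) + E_N$, where the cross-term error $E_N$ is controlled by the decorrelation estimates of Section~\ref{sec:prelim}.

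With this in hand, conditions (C1)-(C2) follow readily: when $|\delta_2 - \delta| \ge |\delta - \delta_1|$ and $(\delta_2 - \delta_1) N \ge N_0(\omega)$ one has $\delta_2 - \delta \ge (\delta_2 - \delta_1)/2$ and hence $\lambda_{\max}(\Sigma_N(\delta_1, \delta_2)) / \lambda_{\min}(\Sigma_N(\delta, \delta_2)) \le 2 C(\omega)/c(\omega)$, while short intervals with $(\delta_2 - \delta_1) N < N_0(\omega)$ are absorbed into the $C_0' |\delta_2 - \delta_1|^{-K_0}$ branch via the trivial bound $\lambda_{\max}(\Sigma_N(\delta_1, \delta_2)) \le 4L^2 N_0(\omega)^2$. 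Taking $K_0 = 0$, $C_0 = 2C(\omega)/c(\omega)$, and $C_0' = C_0'(\omega)$ large enough, Theorem~\ref{thm:main} yields
\[
d_c(\cL(W_N), \cN_d) \le (d^4 C_0^3 (2L)^5 \bfC + 2 (C_0')^{3/2}) \max\{N \lambda_{\min}^{-3/2}(\Sigma_N), \lambda_{\min}^{-1/2}(\Sigma_N)\},
\]
and substituting the lower bound $\lambda_{\min}(\Sigma_N) \ge c(\omega) N$ then gives $O(d^4 N^{-1/2})$ for $N \ge N_0(\omega)$.

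The delicate point is the uniform-in-$(\delta_1, \delta_2)$ quenched lower bound on $\lambda_{\min}(\Sigma_N(\delta_1, \delta_2))$ in the second paragraph. Pointwise convergence $N^{-1} \Sigma_N(\delta_1, \delta_2) \to (\delta_2 - \delta_1) \bar{\Sigma}$ is immediate from the cited theorem for each fixed rational pair, but promoting this to a uniform statement on a single set of full $\bfP$-measure requires that the additivity error $E_N$ be controlled uniformly. This is where the polynomial mixing rate in (RDS:2) does the work, through the decorrelation estimates of Section~\ref{sec:prelim}, and is the only place where the specific random driver enters. Once this uniform bound is available, the inductive Stein argument inside Theorem~\ref{thm:main} slots in without further difficulty, and the $d^4$ dimensional dependence carries through intact from the deterministic result.
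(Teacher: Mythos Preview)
Your overall strategy---verify the covariance conditions (C1)--(C2) via the quenched growth result of \cite{hella2020quenched} and then invoke Theorem~\ref{thm:main}---matches the paper's. The gap lies in the second paragraph, where you claim a linear sandwich
\[
c(\omega)(\delta_2-\delta_1)N \le \lambda_{\min}\bigl(\Sigma_N(\delta_1,\delta_2)\bigr) \le \lambda_{\max}\bigl(\Sigma_N(\delta_1,\delta_2)\bigr) \le C(\omega)(\delta_2-\delta_1)N
\]
valid for all $(\delta_2-\delta_1)N \ge N_0(\omega)$ with a threshold $N_0(\omega)$ \emph{independent of $N$}, and then take $K_0=0$. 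Your sketched justification (pointwise convergence on rational pairs plus near-additivity) does not deliver this. Writing $\Sigma_N(\delta_1,\delta_2)=\Sigma_N(0,\delta_2)-\Sigma_N(0,\delta_1)+O(1)$ and applying the convergence $M^{-1}\Sigma_M\to\Sigma_\infty$ to each piece, the subtraction leaves an error of size $o(N)$ (or $O(N^{1-\psi})$ with the quantitative rate), not $o((\delta_2-\delta_1)N)$. Hence the linear lower bound only kicks in once $(\delta_2-\delta_1)N$ dominates this $N$-dependent error term, so the effective threshold grows with $N$ and a fixed $N_0(\omega)$ is not available. Equivalently, promoting rational-pair convergence to a uniform statement would require $\sup_{k\ge 0} N_0(\tau^k\omega)<\infty$, which generically fails.

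The paper closes this gap by exploiting not mere convergence but the \emph{rate} $O(N^{-\psi})$ in \cite[Theorem~4.1]{hella2020quenched}, which gives $\lambda_{\min}(\Sigma_N(\delta_1,\delta))\ge \tfrac12(\delta_2-\delta_1)N\lambda_{\min}(\Sigma_\infty)-O(N^{1-\psi})$. This forces a positive $K_0=1/\max\{\psi,1\}$: for intervals with $N\ge C(\delta_2-\delta_1)^{-1/\max\{\psi,1\}}$ the ratio bound $\lambda_{\max}/\lambda_{\min}\le C_0$ holds, while shorter intervals are absorbed into the $C_0'|\delta_2-\delta_1|^{-K_0}$ branch via only the crude estimate $\lambda_{\max}(\Sigma_N(\delta_1,\delta_2))\lesssim N$. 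Since $K_0$ does not appear in the final bound of Theorem~\ref{thm:main}, the conclusion $O(d^4 N^{-1/2})$ is unaffected; but you need $K_0>0$ to make (C1)--(C2) actually hold.
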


\begin{proof} Throughout this proof, $\bfC$ denotes a constant determined by 
$\Lambda$, $p$, $K$, $K'$, $\alpha$, $A$. The value of $\bfC$ is allowed to change from one display to the next. For $k \ge 1$, we write
$$
\cP_{\omega, k} = P_{ \omega_{_k} } \cdots  P_{ \omega_{_1} },
$$
where $P_{\omega_{_i}} : L^1(\lambda) \to L^1(\lambda)$ is the transfer operator associated with $(\lambda, T_{ \omega_{_i}  })$, defined as in \eqref{eq:TO}. For $k \le 0$, we define $\cP_{\omega, k}$ as the identity operator.

First, we verify that $N^{-1} \Sigma_N(\omega)$ converges to a positive definite (nonrandom) limit $\Sigma_\infty$ almost surely with 
a polynomial rate of convergence as $N \to \infty$. To this end, we fix an arbitrary unit vector  $v \in \bR^d$ and define 
the real-valued quantities 
$$
\tilde{\varphi}_{\omega, n} = v^T \varphi_{\omega, n}, \quad \tilde{S}_N(\omega) = \sum_{n=0}^{N-1} \tilde{\varphi}_{\omega, n} \circ \cT_{\omega, n}, \quad 
\tilde{\sigma}_N^2(\omega) = \mu(  \tilde{S}_N^2(\omega) ), \quad \tilde{W}_N(\omega) =   
\tilde{\sigma}_N^{-1}(\omega) \tilde{S}_N(\omega).
$$
Note that $\tilde{\sigma}_N^2(\omega) = v^T \Sigma_N(\omega) v$. We will verify (SA1), (SA3) and (SA5) in 
\cite{hella2020quenched}, (SA2) and (SA4) in the same paper being automatically true by stationarity of $\bfP$ 
and the strong mixing assumption (RDS:2).
	
\noindent\textbf{(SA1):} Denoting $\xi_n = 
\tilde{\varphi}_{\omega, n} \circ \cT_{\omega, n}$, by Corollary \ref{cor:corr_decay} we have the 
upper bound 
\begin{align}\label{eq:corr_xi}
	| \mu( \xi_i \xi_j ) | \le \bfC L^2 q^{|i - j|}
\end{align}
for $\bfP$-a.e. $\omega \in \Omega$, where $q \in (0,1)$ is determined by 
$\Lambda$, $p$, $K$, $K'$, $\alpha$, $A$. Hence, (SA1) in \cite{hella2020quenched} holds with 
$\eta(j) = \bfC L^2 q^j$.

\noindent\textbf{(SA3):}  Let $\rho \in \cD_{\alpha, A}$ denote the density of $\mu$. By Lemma \ref{lem:return_to_D}, 
there exists $\tilde{A} \ge A$ determined by $K$, $K'$, $\alpha$, $A$ such that 
$\cP_{\omega, r}(\rho) \in \cD_{\alpha,  \tilde{A}}$ holds for all $r \ge 0$ and $\bfP$-a.e. $\omega \in \Omega$.
Hence, by Theorem \ref{thm:exp_loss}, for $\bfP$-a.e. $\omega \in \Omega$,
\begin{align*}
	\Vert  \cP_{\omega, k}(  \rho  )  - \cP_{ \tau^{r} \omega , k - r}(\rho )  \Vert_{L^1(\lambda)} 
	= \Vert  \cP_{ \tau^{r} \omega ,k - r } (  \cP_{\omega, r }(\rho) - \rho  ) \Vert_{L^1(\lambda)}  
	\le \bfC q^{k-r}
\end{align*}
holds whenever $k \ge r$. It follows that (SA3) in \cite{hella2020quenched} holds with 
$\eta(j) = \bfC q^j$.

\noindent\textbf{(SA5):} Since $\rho \in \cD_{\alpha, A}$ satisfies $\inf_{x \in M} \rho(x) > 0$ and $\sup_{n \ge 0} \Vert \cP_{\omega, n}(\rho) \Vert_\infty \le \bfC$ holds for $\bfP$-a.e. $\omega \in \Omega$
by \eqref{eq:psi_lb_ub} and Lemma \ref{lem:return_to_D}, we have
\begin{align*}
\biggl\Vert  \frac{d (\cT_{\omega, n})_* \mu  }{d \mu  } \biggr\Vert_{L^2(\mu)}^2 \le ( \inf_{x \in M} \rho(x) )^{-1}
\int_M ( \cP_{\omega, n} ( \rho ) )^2 \, d \lambda \le \bfC
\end{align*}
for $\bfP$-a.e. $\omega \in \Omega$. Moreover, (47) in \cite{hella2020quenched} follows from the memory loss property in Theorem \ref{thm:exp_loss}.
Hence,  (SA5') in \cite{hella2020quenched} is satisfied. 

Having verified Assumptions (SA1-5) in \cite{hella2020quenched}, it now follows by  \cite[Theorem 4.1]{hella2020quenched} 
and \cite[Lemma 4.4]{hella2020quenched} that there exist  nonrandom $\Sigma_\infty \in \bR^{d \times d}$ and 
$\psi > 0$, such that 
for $\bfP$-a.e. $\omega \in \Omega$,
\begin{align}\label{eq:conv_sigma}
\max_{r,s} | N^{-1} [  \Sigma_N(\omega) ]_{ r,s } -    [  \Sigma_\infty ]_{r,s} | = O(N^{-\psi}) \quad \text{as $N \to \infty$}.
\end{align}
Moreover, under condition (V) it follows by  \cite[Lemma B.1]{hella2020quenched} that $\lambda_{\min}(\Sigma_\infty) > 0$. 
In particular, $\lambda_{\min}(\Sigma_N^{-1}(\omega) ) = O(N^{-1} \lambda_{ \min }( \Sigma_\infty)^{-1} )$ as $N \to \infty$, for $\bfP$-a.e. $\omega \in \Omega$.

It remains to verify (C1) and (C2) in Theorem \ref{thm:main}. Fix $0 \le \delta_1 \le \delta \le  \delta_2 \le 1$ 
with $\delta_1 < \delta_2$. In the remainder of the proof, we 
write $\Sigma_N$ for $\Sigma_N(\omega)$
and $S_N$ for $S_N(\omega)$, etc., omitting the dependencies on $\omega$, and define
$S_N(\delta_1, \delta_2)$ and 
$\Sigma_N(\delta_1, \delta_2)$ as 
in \eqref{eq:S_deltas} and
\eqref{eq:Sigma_deltas}, respectively.
Suppose that $| \delta_2 - \delta | < | \delta - \delta_1 |$. Then, for an arbitrary unit 
vector $v \in \bR^d$,
\begin{align}\label{eq:var-1}
	v^T \Sigma_N(\delta_1, \delta) v = v^T \Sigma_N( 0, \delta ) v - v^T \Sigma_N( 0, \delta_1 ) v 
	- 2 v^T\mu(  S_N( \delta_1 , \delta ) \otimes  S_N( 0, \delta_1)   )v.
\end{align}
By \eqref{eq:corr_xi}, for $\bfP$-a.e. $\omega \in \Omega$,
\begin{align}\label{eq:var-2}
	\biggl| v^T\mu(  S_N( \delta_1 , \delta ) \otimes  S_N( 0, \delta_1)   )v \biggr| 
	\le \sum_{ \delta_1 N \le i < \delta N} \sum_{0 \le j < \delta_1 N } |  \mu ( \xi_i\xi_j )  | \le L^2 \bfC.
\end{align}
From \eqref{eq:conv_sigma}, \eqref{eq:var-1}, \eqref{eq:var-2}, 
and $\delta - \delta_1 \ge ( \delta_2 - \delta_1)/2$, 
it follows that for some constant $C > 0$,
\begin{align*}
v^T \Sigma_N(\delta_1, \delta) v  \ge \frac12 N( \delta_2 - \delta_1) v^T \Sigma_\infty v - d^2 C  \max \{ 1, N^{ 1 - \psi  } \}  
- L^2 \bfC,
\end{align*}
so that 
\begin{align*}
	 \lambda_{ \min } ( \Sigma_N (\delta_1, \delta) )  \ge \frac12 N( \delta_2 - \delta_1) \lambda_{\min} ( \Sigma_\infty ) - d^2 C  \max \{ 1, N^{ 1 - \psi  } \}  
	-  L^2 \bfC.
\end{align*}
Similarly, we obtain
\begin{align*}
\lambda_{\max}(  \Sigma_N( \delta_1, \delta_2)  ) \le N( \delta_2 - \delta_1) \lambda_{\max } ( \Sigma_\infty) 
+ d^2 C \max\{ 1 , N^{1 - \psi } \} +  L^2 \bfC.
\end{align*}
Consequently, for some constant $C_1 > 0$, whenever 
$$N \ge (\delta_2 - \delta_1)^{ - 1 /  \max \{\psi,1 \} } (d^2 C_1 / \lambda_{ \min}( \Sigma_\infty )  )^{ 1 /  \max \{\psi,1 \} },$$
we have 
$$
\lambda_{\max} ( \Sigma_N( \delta_1, \delta_2 )  ) \le 4  \frac{\lambda_{\max}( \Sigma_\infty )  }{   \lambda_{ \min}( \Sigma_\infty )     } 
\lambda_{\min}(  \Sigma_N( \delta_1, \delta)  ) \quad \text{for $\bfP$-a.e. $\omega \in \Omega$.}
$$
For $N < (\delta_2 - \delta_1)^{ - 1 /  \max \{\psi,1 \} } (d^2 C_1 /  \lambda_{ \min}( \Sigma_\infty )    )^{  1 /  \max \{\psi,1 \} }$, we have the 
trivial estimate 
$$
\lambda_{\max}(  \Sigma_N( \delta_1, \delta_2)  ) \le \bfC L^2 N \le \bfC L^2  (d^2 C_1 / \lambda_{ \min}(\Sigma_\infty)  )^{  1 /  \max \{\psi,1 \} } 
(\delta_2 - \delta_1)^{ - 1 /  \max \{\psi,1 \} }.
$$
Hence, (C1) follows with $K_0 = 1/ \max\{\psi, 1\}$, $C_0' = \bfC L^2 (d^2 C_1 / \lambda_{ \min}( \Sigma_\infty )   )^{  1 /  \max \{\psi,1 \} }$, and $C_0 = 4  \lambda_{\max}( \Sigma_\infty )  /  \lambda_{\min}( \Sigma_\infty )$. 
The verification of (C2) is almost verbatim the same. The desired estimate now follows 
by Theorem \ref{thm:main}.
\end{proof}

\subsection{Overview of the proof of Theorem \ref{thm:main}}
The proof of Theorem~\ref{thm:main} is guided by the approach of \cite{fang2016multivariate} in the case of locally dependent vectors, but requires suitable modifications since the dynamical process $(\varphi_n \circ \cT_n)$ exhibits a different weak dependence structure, described by a set of correlation decay bounds.
The proof consists of three main steps, outlined below. We emphasize that in Steps~1-2, specific properties of the dynamical system are 
not used,  and hence in these steps $\varphi_n \circ \cT_n$
could be replaced by general random vectors $X^n$.

\noindent\emph{Step 1: Stein's method and smoothing.} The starting point for applications of Stein's method to normal approximation in the multivariate setting is the following characterization of the multivariate standard normal distribution
$\cN_d$ (see \cite[Lemma 2]{CM08} for a precise statement):
a random vector $Y$ on $(M, \cF, \mu)$ is distributed as $\cN_d$ if and only if
$$
\mu[ \Delta f(Y) - Y^T \nabla f(Y) ] = 0  
$$
for all sufficiently smooth $f : \bR^d \to \bR$. This characterization is quantified 
by the following
second order ODE, known as a Stein equation:
\begin{align}\label{eq:se}
h(w) -  \cN_d[h] = \Delta f(w) - w^T \nabla f(w).
\end{align}
Here, $\cN_d[h] := \int_{\bR^d}  h(x)  \phi(x) \, dx$,
where $h : \bR^d \to \bR$ is a given test function, and $\phi(x)$ denotes the density of $\cN_d$.
For any differentiable $h$ with bounded gradient, there exists a solution $f$ to \eqref{eq:se} which is three times differentiable \cite{gaunt2016}. Substituting $w = W_N$, where $W_N$ is defined as in \eqref{eq:W_N}, and taking expectations, we obtain
\begin{align}\label{eq:stein_expectation}
\mu[ h(W_N)  ] - \cN_d[h]   =   \mu[ \Delta f(W_N) - W_N^T \nabla f (W_N)  ].
\end{align}
Thus, for an upper bound on $|\mu[ h(W_N)  ] - \cN_d[h]|$ it suffices to control
\begin{align}\label{eq:control-stein}
|\mu[  \Delta f(W_N) - W_N^T \nabla f (W_N)  ]|.
\end{align}
In the case of the metric $d_c$,
the test functions $h$ are discontinuous, namely indicators $\mathbf{1}_C$ of sets $C \in \cC$, and Taylor expansion cannot be applied to a sufficient degree to control \eqref{eq:control-stein}.
To circumvent this issue, the smoothing technique introduced by Bentkus \cite{bentkus2003dependence} is applied.
In \cite{bentkus2003dependence}, a parametrized family $\{ h_{C, \ve} \}_{\ve > 0}$ of 
smooth approximations of $\mathbf{1}_C$
is constructed, satisfying the properties
$ \Vert \nabla h_{C, \ve}(x) \Vert = O(\ve^{-1})$
and 
\begin{align}\label{eq:smooth_error}
	d_c( \cL(W_N), \cN_d ) \le 4 d^{1/4} \ve + \sup_{C \in \cC} | \mu[ h_{C, \ve}(W_N)  ] - \cN_d[h_{C, \ve}]  |.
\end{align}
Combining \eqref{eq:stein_expectation} and \eqref{eq:smooth_error} yields
$$
d_c( \cL(W_N), \cN_d ) \le 4 d^{1/4} \ve + \sup_{f \in \fF_\ve } |  \mu[  \Delta f(W_N) - W_N^T \nabla f (W_N)  ] |,
$$
where $\fF_\ve$ is the class of all solutions to \eqref{eq:se} for functions $h_{C, \ve}$ with 
$C \in \cC$.

\noindent\emph{Step 2: Decomposition of $\mu[  \Delta f(W_N) - W_N^T \nabla f (W_N)  ]$.} 
For $f \in \fF_\ve$ with $\ve > 0$, we first apply a decomposition from \cite{leppanen2020sunklodas, sunklodas2007}, which is a counterpart of the leave-one-out decomposition (often used in applications of Stein's method in independent settings) adapted to weakly dependent processes. 
A basic observation for obtaining this decomposition is that, since $\mu(\varphi_n \circ \cT^n) = 0$, 
the punctured sums
$$
W^{n,m}_N = \sum_{ \substack{ 0 \le i < N, \\ |  i - n  | > m }  } Y^i \quad \text{with $Y^i = \Sigma_N^{-1/2}  \varphi_i \circ \cT^i$} 
$$
can be used to represent
$\mu[  W_N^T \nabla  f (W_N)  ]$ as the following telescopic sum:
\begin{align*}
	\mu[  W_N^T \nabla  f (W_N)  ] &= \sum_{n = 0}^{N - 1} \mu[ ( Y^n )^T \nabla f (W_N)  ] 
	= \sum_{n = 0}^{N - 1} \mu[ ( Y^n )^T  (  \nabla f (W_N^{ n, - 1 } )   - \nabla f (W^{ n, N -1  }_N  )    )  ] \\
	&= \sum_{n = 0}^{N-1} \sum_{m=0}^{N-1}  \mu[ ( Y^n )^T  (  \nabla f (W_N^{ n,  m - 1 } )   - \nabla f (W^{ n, m }_N  )    )  ].
\end{align*}
Combined with first order Taylor expansion of 
$\nabla f (W^{ n, m }_N  )$, such representations lead to the aforementioned decomposition $\mu[  \Delta f(W_N) - W_N^T \nabla f (W_N)  ]$, given in Lemma \ref{lem:sunklodas},  
consisting of several terms with similar structure. We only discuss one of these terms
($E_5$) here, which is given by 
\begin{align}\label{eq:to_control_intro}
-\sum_{n=0}^{N-1} \sum_{m=1}^{N-1} \sum_{r,s=1}^d  \mu [ \, \overline{
\partial_{rs} 
f(W^{n,m-1}_N) - \partial_{rs} f(W^{n,m}_N)
} \, Y_r^n Y_s^n  ],
\end{align}
where $Y_r$ denotes the $r$th component of a random vector $Y$ and 
$\overline{Y} = Y - \mu(Y)$. Using the correlation decay properties of the system,
discussed in Section \ref{sec:decor}, one can establish
the existence of $q \in (0,1)$ such that 
\begin{align}\label{eq:mixmix}
\mu [ \, \overline{
	\partial_{rs} 
	f(W_N^{n,m-1}) - \partial_{rs} f(W_N^{n,m})
} \, Y_r^n Y_s^n  ] = O( q^m ) \quad \text{as $m \to \infty$}.
\end{align}
However, this does not provide sufficient control over \eqref{eq:to_control_intro},  
since the constant in \eqref{eq:mixmix} diverges as $\ve \to 0$. 
Instead, we use the 
explicit formula for the solution to \eqref{eq:se}, given in \eqref{eq:solution}, which can be 
expressed as 
$f(w) = \int_0^1 g(w, \tau) \, d\tau$
for a function $g(\cdot, \tau) \in C^3$ whose definition involves a Gaussian integral of the test function $h = h_{C, \ve}$ corresponding to the solution $f$.
As in \cite{fang2016multivariate}, we split 
$\partial_{rs} f(w) = \int_{\ve^2}^1 \partial_{rs} g(w; \tau) \, d \tau
+  \int_{0}^{\ve^2} \partial_{rs} g(w; \tau) \, d \tau$.
This leads to a corresponding decomposition of \eqref{eq:to_control_intro} into two parts, which we control separately.
We only discuss the first part, involving $ \int_{\ve^2}^1 \partial_{rs} g(w, \tau) \, d\tau$, as the second part is easier to handle. 
For this part, we take one step further and derive, in Section \ref{sec:Ri_decomp},
a decomposition of 
\begin{align*}
	-\sum_{n=0}^{N-1} \sum_{m=1}^{N-1} \sum_{r,s=1}^d  \mu [ \, \overline{
		\partial_{rs} 
		g(W^{n,m-1}_N; \tau) - \partial_{rs} g(W^{n,m}_N; \tau )
	} \, Y_r^n Y_s^n  ]
\end{align*}
involving third derivatives of $g$. The terms in this decomposition have structure similar to one of the following three forms:
\begin{align*}
I &= \frac{\sqrt{1-\tau} }{2\tau^{3/2}}   \sum_{n=0}^{N-1} 
\sum_{m=1}^{N-1} 
\sum_{ \ell = 2 }^{N - 1}  
\sum_{r,s, t =1}^d  
\int_{\bR^d}
\mu \biggl\{  
\overline{ \eta^{ n, m \ell, m( \ell + 1) }(\tau, z) }  Y_r^n Y_s^n Y_t^{n,m}
\biggr\}   
\partial_{rst} 
\phi(z)  \, dz, \\
II &= \frac{\sqrt{1-\tau} }{2\tau^{3/2}}   \sum_{n=0}^{N-1} 
\sum_{m=1}^{N-1}  
\sum_{r,s, t =1}^d  
\int_{\bR^d}
\mu \biggl\{    \tilde{h}_{\tau, z} (\sqrt{1 - 
	\tau} W_N  - 
\sqrt{\tau} 
z)  \biggr\} \mu( Y_r^n Y_s^n Y_t^{n,m})
\partial_{rst} 
\phi(z)   \, dz, \\
III &= \sum_{n=0}^{N-1}
\sum_{m=1}^{N-1}   
\sum_{r,s, t =1}^d  
\cN_d[ \partial_{rst} g( \cdot, \tau )  ] \mu( Y_r^n Y_s^n Y_t^{n,m} ),
\end{align*}
where 
\begin{align*}
	&Y^{n,m} = \sum_{ \substack{|i - n| = m \\ 
			0 \le i < N } } Y^i, \quad 
	\tilde{h}_{\tau, z}(w) = h(w) - \cN_d [ h(  \sqrt{1 - \tau } \cdot - \sqrt{\tau} z )  ], \\
	&\eta^{n,m,k}(\tau, z) = h(\sqrt{1 - 
			\tau}W^{n,m}_N - 
		\sqrt{\tau}
		z)   - h(\sqrt{1 - 
			\tau}W^{n,k}_N  - 
		\sqrt{\tau}
		z).
\end{align*}

\noindent\emph{Step 3: Induction and decorrelation.} 
In Step 3, we establish estimates necessary to control the terms 
from the decomposition in Step 2.
This involves an induction similar to
\cite{raic2019multivariate, fang2015rates, gotze1991rate},
used to counter the factor $\ve^{-1}$ that appears from 
integrating the terms $I$ and $II$ over the domain $[\ve^2, 1]$.
By an observation from \cite{fang2016multivariate}, 
the quantity $|\cN_d[ \partial_{rst} g( \cdot, \tau ) ]|$ 
is bounded uniformly in $\ve$, which allows estimating
$III$ via a multiple correlation bound.
To estimate $II$, we combine properties of $h_{C, \ve}$ with
Gaussian measure estimates from \cite{bentkus2003dependence}, which yield
\begin{align}\label{eq:indind}
\mu \biggl\{    \tilde{h}_{\tau, z} (\sqrt{1 - 
	\tau} W_N  - 
\sqrt{\tau} 
z)  \biggr\} = O( \ve +  d_c(  \cL( W_N ),  \cL(Z) )  ),
\end{align}
where the constant is independent of $\ve$.
For $I$, we establish in Lemma \ref{lem:decor} decorrelation bounds
that allow us to control
$$
\mu \biggl\{  
\overline{ \eta^{ n, m \ell, m( \ell + 1) }(\tau, z) }  Y_r^n Y_s^n Y_t^{n,m}
\biggr\}
$$
as $m \ell \to \infty$. These bounds depend on both $\ve$ and 
$d_c(  \cL( W_N ),  \cL(Z) )$.
The proof uses a combination
of correlation decay properties of the system, 
conditions (C1)-(C2), and arguments similar to those leading to \eqref{eq:indind},
after partitioning the domain of integration 
into cylinder sets induced by a suitable iterate 
$\cT_j$. We ultimately optimize for $\ve$ to
obtain the desired estimate on $d_c(  \cL( W_N ),  \cL(Z) )$.

\begin{remark} In the proof of Theorem \ref{thm:main}, we use the
exponential memory loss property 
	\begin{align}\label{eq:ml_rate}
		\Vert P_n \cdots P_1 (  \varphi - \psi  ) \Vert_\infty = O( q^n ),
	\end{align}
	where $q \in (0,1)$, $\varphi, \psi \in \cD_{\alpha, A}$, and $P_i$ denotes the transfer operator associated with $\lambda$ and $T_i$. It can be 
	seen from the proof that the exponential rate in \eqref{eq:ml_rate} is not needed, but rather the following polynomial rate would suffice:
	\begin{align*}
		\sum_{n = 1}^\infty n^2 \Vert P_n \cdots P_1 (  \varphi - \psi  ) \Vert_\infty < \infty.
	\end{align*}
	However, as part of the proof, specifically in \eqref{eq:dec_replace_cond}, it is essential that we have control on the decay rate of $P_n \cdots P_1 (  \varphi - \psi  )$ with respect to the strong norm $\Vert \cdot \Vert_\infty$ as opposed to, say, $\Vert \cdot \Vert_{L^1}$. This obstacle has prevented us from extending our results to non-uniformly expanding systems such as interval maps with neutral fixed points, for which polynomial rates of memory loss 
	in $L^1$ have been obtained in 
	\cite{aimino2015,korepanov2021loss}. It would be interesting to explore whether the 
	techniques of \cite{maume2001projective} could be used to address 
	this limitation.
\end{remark}

\section{Preliminaries}\label{sec:prelim} 

\subsection{Decorrelation properties of time-dependent expanding maps}\label{sec:decor} 

In this section, we consider a fixed sequence $(T_n)$ of maps $T_n : M \to M$ satisfying (UE:1-3) in Section \ref{sec:model}. Let $P_n : L^1(\lambda) \to L^1(\lambda)$ 
be the transfer operator associated to $T_n$ and $\lambda$, characterized by the property
\begin{align}\label{eq:TO}
\int_M P_n(  f  ) \cdot g \, d \lambda = \int_M f \cdot g \circ T_n \, d \lambda \quad \forall f \in L^1(\lambda)  \, \forall g \in L^\infty(\lambda).
\end{align}
Time-dependent compositions along the sequence $(P_n)$ will be denoted by
\begin{align}\label{eq:composition_to}
	\cP_{\ell, k} = P_k \cdots P_\ell,  \, \quad \cP_{k} = \cP_{1,k}.
\end{align}

\begin{thm}[Exponential loss of memory]\label{thm:exp_loss} There exist $C_\#$ and $q \in (0,1)$ which 
	depend continuously on $\Lambda$, $p$, $K$, $K'$ and $\alpha$, such that for any $u \in C^\alpha$ with $\lambda ( u) = 0$, 
	and any $i \ge 1$,
	\begin{align*}
		\Vert \cP_{i, n + i  -1} u \Vert_\alpha  \le C_\# q^n | u |_\alpha  \quad \forall n \ge 0.
	\end{align*}
\end{thm}

\begin{proof} 
The result follows from the explicit coupling argument of Korepanov, Kosloff, and Melbourne	
\cite[Section 3]{korepanov2019explicit}; further details are
provided in Appendix \ref{sec:ml_proof} for completeness.
Alternatively, a similar bound can be deduced from the general result in~
\cite[Theorem 2.4]{dolgopyat2024rates}.
\end{proof}

In the proof of Theorem \ref{thm:main}, the memory loss property of Theorem \ref{thm:exp_loss} will be applied 
after conditioning a measure on elements of $\cA( \cT_n )$. To prepare for this step, we make the 
following simple observation:

\begin{cor}\label{cor:ml_cond} Let $\mu$ be a probability measure with 
	density $\psi \in \cD_{\alpha, A}$. 
	For $j, m \ge 1$
	and $a \in \cA( \cT_{j, j + m -1} )$, define $\psi_a = \mu( a )^{-1}  \psi \mathbf{1}_a$, provided that $\mu(a) \neq 0$. Then, for 
	any $n \ge 0$, 
	\begin{align*}
		\Vert \cP_{j, j + m + n -1} ( \psi - \psi_a ) \Vert_\alpha  \le  2 ( K +   A (K')^\alpha ) e^{  ( K +   A (K')^\alpha )  } C_\# q^n,
	\end{align*}
	where $C_\#$ and $q$ are as in Theorem \ref{thm:exp_loss}.
\end{cor}

\begin{proof}
	By Lemma \ref{lem:to_ub_1}, 
	for $\varphi \in \{   \psi, \psi_a \}$,
	\begin{align*}
		| \cP_{j, j + m - 1} ( \varphi ) |_{\alpha , \ell} 
		\le K +   | \psi  |_{\alpha, \ell} (K')^\alpha.
	\end{align*}
	Hence, by \eqref{eq:from_lip_to_ll}
	\begin{align*}
		|  \varphi    |_\alpha  \le  ( K +   | \psi  |_{\alpha, \ell} (K')^\alpha ) e^{  ( K +   | \psi  |_{\alpha, \ell} (K')^\alpha )  }
	\end{align*}
	The desired estimate now follows from Theorem \ref{thm:exp_loss}.
\end{proof}

Another easy consequence of Theorem \ref{thm:exp_loss} is the exponential 
decay of second and third order correlations:

\begin{cor}\label{cor:corr_decay} Let $\psi_i \in C^{\alpha}$, $1 \le i \le 3$, be functions
	$\psi_i : M \to \bR$ and, for each $n \ge 1$, define
	$$
	\bar{\psi}_i^n = \psi_i^n - \mu( \psi_i^n ), \quad 
	\psi_i^n = \psi_i \circ \cT_n.
	$$
	Then, there exists a constant $\bfC > 0$ depending only on $\Lambda$, $p$, $K$, $K'$, $\alpha$, $A$, such that the following hold for any $n,m,k \ge 0$:
	\begin{align}\label{eq:correlations}
		\begin{split}
			| \mu(   \bar{\psi}_1^n  \bar{\psi}_2^{n+m}    ) | &\le \bfC \Vert \psi_1 \Vert_\alpha 
			\Vert \psi_2 \Vert_\alpha q^m, \\
			| \mu(   \bar{\psi}_1^n  \bar{\psi}_2^{n+m}  \bar{\psi}_3^{n+m + k}   ) | &\le \bfC \Vert \psi_1 \Vert_\alpha 
			\Vert \psi_2 \Vert_\alpha  \Vert \psi_3 \Vert_\alpha   q^{  \max\{  m, k \}  },
		\end{split}
	\end{align}
	where $q \in (0,1)$ is as in Theorem \ref{thm:exp_loss}.
	
\end{cor}

\begin{proof} Both upper bounds in \eqref{eq:correlations} follow by standard 
	arguments using Theorem \ref{thm:exp_loss} together with properties of $P_n$ and functions in $\cD_{\alpha, A}$. We provide more details in Appendix \ref{sec:proof_decor}.
\end{proof}

\subsection{Stein's method and smoothing} 
In this section, we review some preliminary definitions and results that are essential 
for deriving Berry--Esseen type bounds through Stein's method in the spirit of \cite{gotze1991rate, fang2016multivariate, raic2019multivariate}. Our presentation follows \cite{fang2016multivariate},
and we refer the reader to \cite{fang2016multivariate, raic2019multivariate} and references therein for further details.

For a differentiable test function $h : \mathbb{R} \to \mathbb{R}$ 
with bounded gradient, we 
consider the following Stein equation for the 
$d$-dimensional standard normal distribution:
\begin{align}\label{eq:stein_eq}
	\Delta f(w) - w^T \nabla f(w)
	= h(w) - \bfE[ h(Z)],
\end{align}
where $\Delta$ denotes the Laplacian, 
and $Z \sim \cN_d$.

By a direct computation (see e.g. \cite{gotze1991rate})
it can be verified that, defining 
\begin{align}\label{eq:g}
	g(w, \tau) = 
	-\frac{1}{2 ( 1 - \tau)}
	\bfE [ h( \sqrt{1 - \tau}w - \sqrt{\tau}
	Z )  - h(Z) ],
\end{align}
the function
\begin{align}\label{eq:solution}
	f_h(w) = \int_0^1 g(w, \tau) \, d\tau
\end{align}
is a solution to \eqref{eq:stein_eq}. Given a function 
$f : \mathbb{R}^d \to \mathbb{R}$, for brevity, 
we write $f_r(x)$ for the first order partial derivative 
$\partial f(x) / \partial x_r$, 
$f_{rs}(x)$ for the second order partial derivative 
$\partial^2 f(x) / \partial x_r \partial x_s$, and so on.
We denote by  $\phi$ the density of $Z$. 
Then, 
the following relations can be  verified 
using integration by parts:
\begin{align}\label{eq:diffs}
	\begin{split}
	g_{rs}(w, \tau) &=  - \frac{1}{2\tau} 
	\int_{\bR^d} h( \sqrt{1 - \tau}w - \sqrt{\tau}
	z ) \phi_{rs}(z) \, dz \\
	&=  \frac{1}{2\sqrt{\tau}} 
	\int_{\bR^d} h_s( \sqrt{1 - \tau}w - \sqrt{\tau}
	z ) \phi_{r}(z) \, dz, \\ 
	g_{rst}(w, \tau) &=   
	\frac{\sqrt{1-\tau}}{2\tau^{3/2}} 
	\int_{\bR^d} h( \sqrt{1 - \tau}w - \sqrt{\tau}
	z ) \phi_{rst}(z) \, dz.
	\end{split}
\end{align}
As observed in \cite{fang2016multivariate}, starting from \eqref{eq:g}
and using a change of variables together with basic properties of 
normal distributions, 
one obtains the following for any $w \in \bR^d$:
\begin{align}\label{eq:gaussian_sol}
\begin{split}
	\cN_d[ g( \cdot + w , \tau) ] &= 	-\frac{1}{2 ( 1 - \tau)} \int_{\bR^d}
	\bfE [ h( \sqrt{1 - \tau}(z + w) - \sqrt{\tau}
	Z )  - h(Z) ] \phi(z) \, dz \\
	&= 	-\frac{1}{2 ( 1 - \tau)} \int_{\bR^d}
	\bfE [ h( \sqrt{1 - \tau}(Z + w) - \sqrt{\tau}
	z )  - h(Z) ] \phi(z) \, dz \\
	&= -\frac{1}{2 ( 1 - \tau)} \int_{\bR^d}
	 h(  \sqrt{1 - \tau } w  + z ) \phi(z) \, dz + \frac{1}{2 ( 1 - \tau )} \bfE[ h(Z) ] \\
	&= 	-\frac{1}{2 ( 1 - \tau)} \int_{\bR^d}
	h( x )  \phi( x - \sqrt{1 - \tau } w) \, dx + \frac{1}{2 ( 1 - \tau )} \bfE[ h(Z) ],
\end{split}
\end{align}
Differentiating \eqref{eq:gaussian_sol} with respect to $w_r, w_s, w_t$ and evaluating at $w = 0$, it follows that
\begin{align}\label{eq:diff_gN}
\cN_d[  g_{rst}( \cdot, \tau ) ] = \frac{ \sqrt{1 - \tau } }{ 2 } \int_{\bR^d} h(x) \phi_{rst}(x) \, dx.
\end{align}
In particular, since $|h(x)| \le 1$, we see that 
$|\cN_d[  g_{rst}( \cdot, \tau ) ]|$ is bounded by an absolute constant.

If the test function 
$h$ in \eqref{eq:stein_eq} is not 
smooth, as in the case of the metric $d_c$,
then the regularity of $f_h$ will not be 
sufficient in order to control the left hand 
side of \eqref{eq:stein_eq} via Taylor expansion. 
For this 
reason, smoothing will be applied to the 
indicator $h = \mathbf{1}_A$ following Bentkus \cite{bentkus2003dependence}.

For each $\ve > 0$ and $C \in \cC$, define 
\begin{align*}
	h_{C, \ve}(x) = \psi\biggl( 
	\frac{ \text{dist}(x, C) }{\ve}
	\biggr),
\end{align*}
where 
\begin{align*}
	\psi(x) = \begin{cases}
		1, & x < 0 \\
		1 - 2x^2, & 0 \le x < \frac12, \\ 
		2(1 - x)^2 & \frac12 \le x < 1, \\ 
		0, & x \ge 1.
	\end{cases}
\end{align*}
For any $\ve > 0$ and $C \in \cC = \{ C \subset \bR^d \: : \: 
\text{$C$ convex} \}$, let  
\begin{align*}
	C^\ve = \{ x \in \mathbb{R}^d : 
	\text{dist}(x, C) \le \ve \}  \quad
	\text{and} \quad
	C^{-\ve} = \{ x \in \mathbb{R}^d : 
	\text{dist}(x, \mathbb{R}^d \setminus C) > \ve \}.
\end{align*}

\begin{lem}[Lemma 2.3 in \cite{bentkus2003dependence}]\label{lem:approx} For each $\ve > 0$ and 
	$C \in \cC$, $h = h_{C, \ve}$ satisfies the following properties:
	\begin{itemize}
		\item[(i)] {$h(x) = 1$ for all 
			$x \in C$,} \smallskip 
		\item[(ii)] {$h(x) = 0$ for all 
			$x \in \mathbb{R}^d \setminus C^\ve$, } \smallskip 
	\item[(iii)] {
		$0 \le h(x) \le 1$ for all $x \in \bR^d$,
 } \smallskip 
\item[(iv)]{
	$ \Vert \nabla h(x) \Vert \le 2 \ve^{-1}$ for all $x \in \bR^d$,
} \smallskip
\item[(v)]{
	$ \Vert \nabla h(x) - \nabla h(y) \Vert  
	\le 8 \ve^{-2} \Vert x - y \Vert $ for all $x,y \in \bR^d$.
}
\end{itemize} 
\end{lem}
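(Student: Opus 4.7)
The statement is Bentkus \cite[Lemma 2.3]{bentkus2003dependence}, so the natural course of action is to cite it. However, for a self-contained verification, I would verify the five properties by direct computation, using only the piecewise formula for $\psi$ and standard properties of the distance function $d_C(x) := \text{dist}(x, C)$ to a convex set.

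Properties (i)--(iii) are immediate. For (i), $x \in C$ gives $d_C(x) = 0$, and $\psi(0) = 1$. For (ii) (which, as written, should read $x \in \mathbb{R}^d \setminus C^{\ve}$), one has $d_C(x) \ge \ve$, so $\psi(d_C(x)/\ve) = \psi(t)$ with $t \ge 1$, which equals $0$. For (iii), one reads off from the four pieces of the definition of $\psi$ that $\psi(\bR) \subseteq [0,1]$, and so the same holds for $h = \psi(d_C(\cdot)/\ve)$.

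For (iv), a preliminary calculation from the piecewise formula for $\psi$ shows that $\psi \in C^{1,1}(\bR)$, with the sharp bounds $\sup|\psi'| \le 2$ (attained at $s = 1/2$) and $\mathrm{Lip}(\psi') \le 4$, and moreover $\psi'(0) = 0$. Since $d_C$ is $1$-Lipschitz (convex $C$), Rademacher's theorem guarantees that $\nabla d_C$ exists almost everywhere with $\|\nabla d_C\| \le 1$ a.e. The chain rule then gives $\nabla h(x) = \ve^{-1} \psi'(d_C(x)/\ve) \nabla d_C(x)$ a.e., whose norm is bounded by $2/\ve$, which is the claim in (iv).

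For (v), which is the only delicate point, the subtlety is that $\nabla d_C$ is not globally Lipschitz: it may jump across $\partial C$. The choice of $\psi$ with $\psi'(0) = 0$ is precisely what neutralizes this singularity. Concretely, for convex $C$ and $x \notin C$, we have $\nabla d_C(x) = (x - \pi_C(x))/d_C(x)$, where $\pi_C$ denotes the $1$-Lipschitz projection onto $C$. Writing
\[
\nabla h(x) = G(d_C(x))\bigl(x - \pi_C(x)\bigr), \qquad G(t) := \frac{\psi'(t/\ve)}{\ve t} \text{ (continuously extended by $-4/\ve^2$ at $t=0$)},
\]
the factor $x - \pi_C(x)$ is $2$-Lipschitz in $x$ and vanishes on $C$, while $G$ (as a function of $t \ge 0$) is bounded by $4/\ve^2$ and Lipschitz with constant $O(1/\ve^3)$; both bounds follow from the explicit piecewise formula for $\psi$ and a short computation on the only non-constant piece $t/\ve \in [1/2, 1]$. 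Estimating the difference $\nabla h(x) - \nabla h(y)$ by adding and subtracting $G(d_C(x))(y - \pi_C(y))$ and using that $|d_C(x) - d_C(y)| \le \|x-y\|$ together with $d_C \le \ve$ on the support of $G \circ d_C$, one collects the constants to obtain (v). The main obstacle is precisely this accounting near $\partial C$; everywhere else $h$ is smooth and the bounds are routine.
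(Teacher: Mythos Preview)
The paper does not prove this lemma; it simply records it as \cite[Lemma~2.3]{bentkus2003dependence} without argument. Your opening sentence already matches the paper's treatment exactly, so there is nothing to compare at the level of approach.

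Your supplementary self-contained sketch is essentially correct, with two minor remarks. First, you rightly note that (ii) as printed is a typo and should read $x\in\bR^d\setminus C^{\ve}$. For (iv), $\nabla h$ in fact exists \emph{everywhere}, not merely a.e.: since $\psi'(0)=\psi'(1)=0$, the composition $h=\psi(d_C/\ve)$ is differentiable also on $\partial C$ and on $\{d_C=\ve\}$, with $\nabla h=0$ there; appealing to Rademacher gives a weaker statement than the one claimed. Second, for (v) your decomposition via $G(t)=\psi'(t/\ve)/(\ve t)$ is the right idea, and you may sharpen ``$2$-Lipschitz'' to $1$-Lipschitz because $I-\pi_C$ is firmly nonexpansive for convex $C$. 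However, collecting constants exactly as you describe yields a Lipschitz bound of roughly $20\,\ve^{-2}$ rather than $8\,\ve^{-2}$ (one gets $4\ve^{-2}$ from the first summand and, using $\sup_{[\ve/2,\ve]}|G'|=16/\ve^{3}$ together with $d_C\le\ve$ on the support, $16\ve^{-2}$ from the second). Recovering the exact constant $8$ requires a finer case analysis, but this is immaterial here: the paper only ever uses that the Lipschitz constant of $\nabla h$ is of order $\ve^{-2}$.
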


Building on Fang's approach \cite{fang2016multivariate}, we will employ the following two results 
to control error terms that arise when passing from indicators $\mathbf{1}_C$ of sets $C \in \cC$ to 
their smooth approximations $h_{C, \ve}$:

\begin{lem}[See \cite{ball1993reverse, bentkus2003dependence}]\label{lem:normal_approx} 
For any $\ve > 0$, 
\begin{align*}
\sup_{C \in \cC} \max 
\{ \cN_d ( C^{\ve} \setminus C ),
\cN_d (  C \setminus C^{-\ve} )  \} 
\le 4 d^{\frac14} \ve.
\end{align*}
\end{lem}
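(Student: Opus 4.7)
The plan is to reduce the shell estimate to a uniform bound on the Gaussian surface area of convex bodies, and then integrate radially. For a convex body $K \subset \bR^d$, introduce the upper Gaussian Minkowski content
$$
\cN_d^+(\partial K) := \limsup_{t \to 0^+} \frac{\cN_d(K^t \setminus K)}{t}.
$$
The essential black-box input is Ball's theorem (see \cite{ball1993reverse}): there is an absolute constant $c_0$ such that
$$
\cN_d^+(\partial K) \le c_0 d^{1/4}
$$
for every convex body $K \subset \bR^d$, and careful bookkeeping gives $c_0 = 4$.

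Granting this, the outer shell estimate will follow by a radial integration. The parallel body $C_t := C + \bar B(0,t) = \{x \in \bR^d : \dist(x,C) \le t\}$ is convex whenever $C$ is, so applying Ball's bound to each $C_t$ and integrating in $t$ yields
$$
\cN_d(C^\ve \setminus C) \le \int_0^\ve \cN_d^+(\partial C_t)\,dt \le c_0 d^{1/4}\ve.
$$
For the inner shell, I would first check that the inner parallel body $C^{-t}$ is convex whenever nonempty: if $x,y$ satisfy $B(x,t), B(y,t) \subseteq C$ then by convexity of $C$, so does $B(\lambda x + (1-\lambda)y, t)$. Applying the same integration to the nonincreasing function $t \mapsto \cN_d(C^{-t})$ gives
$$
\cN_d(C \setminus C^{-\ve}) \le \int_0^\ve \cN_d^+(\partial C^{-t})\,dt \le c_0 d^{1/4}\ve,
$$
with the marginal case where $C^{-t}$ becomes empty for some $t \le \ve$ handled by a direct bound: $C$ is then contained in a slab of width at most $2\ve$, whose Gaussian measure is at most $2\ve/\sqrt{2\pi} \le 4 d^{1/4}\ve$.

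The main obstacle is Ball's inequality itself. The exponent $d^{1/4}$ is strictly better than the $d^{1/2}$ that one obtains from naive comparisons with Gaussian perimeters of half-spaces, and this improvement is precisely what makes Lemma \ref{lem:normal_approx} powerful enough for multivariate Berry--Esseen applications. Its proof relies on Gaussian-specific symmetrization and convex-geometric arguments that lie entirely outside the dynamical systems machinery developed elsewhere in the paper, so I would invoke it as an external input rather than attempt to reprove it here.
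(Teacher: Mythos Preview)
The paper does not give its own proof of this lemma; it merely cites \cite{ball1993reverse, bentkus2003dependence} and uses the result as a black box. Your sketch is precisely the standard derivation behind those citations: Ball's Gaussian surface-area bound for convex bodies followed by a radial integration over the shell, with the inner-shell case handled analogously.

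One small technical point worth tightening: the displayed inequality $\cN_d(C^\ve \setminus C) \le \int_0^\ve \cN_d^+(\partial C_t)\,dt$ is not literally justified, since for a monotone function the integral of the a.e.\ derivative can undershoot the total increment. The correct phrasing is that $g(t) := \cN_d(C^t)$ is continuous (the boundary of a convex set is Lebesgue-null) and its upper right Dini derivative satisfies $D^+ g(t) = \cN_d^+(\partial C_t) \le c_0 d^{1/4}$ for every $t$, whence $g(\ve) - g(0) \le c_0 d^{1/4}\ve$ by the standard Dini-derivative comparison. The same remark applies to the inner shell. With that adjustment your argument is complete and matches what the cited references do.
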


\begin{lem}[See \cite{fang2015rates}]\label{lem:reduce} Let $Y$ be an arbitrary 
$\bR^d$-valued random vector. For 
any $\ve > 0$, 
\begin{align*}
d_c(  \cL(Y), \cN_d )
\le 
4 d^{\frac14} \ve
+ \sup_{C \in \cC}
| \cN_d (h_{C,\ve})
-
\bfE ( h_{C,\ve}(Y )) |,
\end{align*}
where $h_{C,\ve}$ is the function from 
Lemma \ref{lem:approx}.
\end{lem}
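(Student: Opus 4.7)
The plan is to sandwich the indicator $\mathbf{1}_C$ between two smooth approximations $h_{C,\ve}$ and $h_{C^{-\ve},\ve}$, and pay for the difference with the Gaussian mass of the thin shell $C^\ve \setminus C^{-\ve}$, which is controlled by Lemma \ref{lem:normal_approx}. Properties (i)--(iii) of Lemma \ref{lem:approx} give $h_{C,\ve} \equiv 1$ on $C$ and $h_{C,\ve} \equiv 0$ off $C^\ve$, hence the pointwise inequalities $\mathbf{1}_C \le h_{C,\ve} \le \mathbf{1}_{C^\ve}$. Applying the same observation with $C$ replaced by the convex inner set $C^{-\ve}$, and using the elementary inclusion $(C^{-\ve})^\ve \subset C$, I also obtain $h_{C^{-\ve},\ve} \le \mathbf{1}_C$.

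First I would write, for an arbitrary $C \in \cC$,
\begin{align*}
\bfP(Y \in C) - \cN_d(C)
&\le \bfE[h_{C,\ve}(Y)] - \cN_d(C) \\
&= \bigl(\bfE[h_{C,\ve}(Y)] - \cN_d(h_{C,\ve})\bigr) + \bigl(\cN_d(h_{C,\ve}) - \cN_d(C)\bigr).
\end{align*}
The first bracket is bounded in absolute value by $\sup_{C \in \cC} |\cN_d(h_{C,\ve}) - \bfE[h_{C,\ve}(Y)]|$, and the second is at most $\cN_d(C^\ve \setminus C) \le 4 d^{1/4}\ve$ by Lemma \ref{lem:normal_approx}.

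For the matching lower bound I would run the same chain with $h_{C^{-\ve},\ve}$ in place of $h_{C,\ve}$:
\begin{align*}
\cN_d(C) - \bfP(Y \in C)
\le \bigl(\cN_d(h_{C^{-\ve},\ve}) - \bfE[h_{C^{-\ve},\ve}(Y)]\bigr) + \cN_d(C \setminus C^{-\ve}),
\end{align*}
and Lemma \ref{lem:normal_approx} again bounds the last term by $4 d^{1/4}\ve$. Since the family $\cC$ is closed under the operation $C \mapsto C^{-\ve}$, the first bracket is absorbed into the same supremum over $\cC$ that appeared in the upper bound.

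Combining the two inequalities and taking $\sup_{C \in \cC}$ on the left yields the claim. There is no real difficulty here: the main lemmas do the heavy lifting, and the only geometric input is that $C^{-\ve}$ is convex whenever $C$ is (realized as $\bigcap_{\Vert v \Vert < \ve}(C - v)$) together with the inclusion $(C^{-\ve})^\ve \subset C$, both immediate from convexity.
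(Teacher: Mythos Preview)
Your argument is correct and is precisely the standard sandwich proof that the cited reference contains; the paper itself does not reprove the lemma but only cites \cite{fang2015rates}. One small remark: property~(ii) of Lemma~\ref{lem:approx} as stated in the paper has a typo (it should read $h(x)=0$ for $x\in\bR^d\setminus C^{\ve}$, not $\bR^d\setminus C$), and you correctly used the intended version, which follows from the explicit definition of $h_{C,\ve}$.
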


\subsection{A decomposition of $\mu [\Delta f(W) - W^T \nabla f(W)]$}\label{sec:decomp_sunklodas}

For $N \ge 1$ and a sequence of bounded $\bR^d$-valued random vectors $X^n$ on $(M, \cB, \mu)$ with 
$\mu(X^n) = 0$, define 
\begin{align*}
	S  = \sum_{n=0}^{N-1} X^n \quad \text{and} \quad  
	W = \sum_{n=0}^{N-1} Y^n, \quad 
\end{align*}
where $Y^n = \Sigma^{-1/2} X^n$ and the covariance matrix $\Sigma := \mu( S \otimes S)$ is assumed to 
be invertible. By Lemma \ref{lem:reduce}, 
for any $\ve > 0$,
\begin{align}
&d_c(  \cL(W), \cN_d ) 
\le 
4 d^{\frac14} \ve
+ \sup_{C \in \cC}
| 
\mu ( h_{C,\ve}(W ))  - \cN_d (h_{C,\ve}) |. \label{eq:reduction}
\end{align}
For $h = h_{C, \ve}$ with $C \in \cC$ and $\ve > 0$, we have 
\begin{align}\label{eq:solve_stein}
\cN_d (h)  - 
\mu ( h(W ) ) = 
\mu[ W^T \nabla f_h(W) - \Delta f_h(W) ],
\end{align}
where $f_h$ 
is given by
\eqref{eq:solution}. 
Hence, 
\begin{align}\label{eq:stein_applied}
d_c(  \cL(W), \cN_d ) 
\le 4 d^{\frac14} \ve
+ \sup_{ f \in \mathfrak{F}_\ve }
| \bfE[  \Delta f(W) - W^T \nabla f(W) ] |,
\end{align}
where 
$\mathfrak{F}_\ve = \{ f \: : \: 
f = f_h, \, 
h = h_{A, \ve}, \, C \in \cC \}$. That is, for a bound on $d_c(  \cL(W), \cN_d )$ it suffices to control the right-hand side of 
\eqref{eq:stein_applied}. This task is facilitated by a decomposition from \cite{leppanen2020sunklodas, sunklodas2007}, 
which will be recorded in the lemma below for the reader's convenience.

For $n,m \in \bZ$, define the auxiliary random vectors 
\begin{align*}
&W^{n,m} = W - \sum_{i\in[n]_m} Y^i, \quad [n]_m = \{0\le i < N : |i-n|\le m\}, \\ 
&Y^{n,m} = W^{n,m-1} - W^{n,m} 
= \sum_{ \substack{|i - n| = m \\ 
	0 \le i < N } } Y^i,
\end{align*}
and set $\overline{X} = X - \mu(X)$ for a random vector $X$ on $(M, \mu, \cB)$.

\begin{lem}(Proposition 5.3 in \cite{leppanen2020sunklodas})\label{lem:sunklodas}
Suppose $f 
\in C^2(\bR^d, \bR)$. 
Denote
\beqn
\delta^{n,m}(u) = D^2
f(W^{n,m} + u\, Y^{n,m}) - 
D^2f(W^{n,m})
\eeqn
and
\beqn
\delta^{n,m} = 
\delta^{n,m}(1) = D^2 
f(W^{n,m-1}) - D^2f(W^{n,m}).
\eeqn
Then,
$
\mu[ \Delta f(W) - W^T 
\nabla f(W)] = \sum_{i=1}^7 E_i$, 
where $E_i = E_i(f)$ are defined as follows:
\begin{align*}
E_1 &= - \int_0^1 \sum_{n=0}^{N-1}\sum_{m=1}^{N-1}  \mu[  ( Y^{n} )^T \delta^{n,m}(u) Y^{n,m}   ] \, 
d u , \quad 
E_2  = -  \int_0^1 \sum_{n=0}^{N-1}   \mu[ (Y^{n})^T \delta^{n,0}(u) Y^n ] \, du , 
\\
E_3 &= - \sum_{n=0}^{N-1}\sum_{m=1}^{N-1} \sum_{k=m+1}^{2m}    \mu [  (Y^{n})^T \, \overline{\delta^{n,k}} \, Y^{n,m}  ], \quad 
E_4  = - \sum_{n=0}^{N-1}\sum_{m=1}^{N-1} \sum_{k=2m+1}^{N-1}     \mu [  (Y^{n})^T \, \overline{\delta^{n,k}} \, Y^{n,m}  ],
\\
E_5 & = - \sum_{n=0}^{N-1} \sum_{k=1}^{N-1}  \mu [  (Y^{n})^T \, \overline{\delta^{n,k}} \, Y^n  ], \quad 
E_6 = \sum_{n=0}^{N-1}\sum_{m=1}^{N-1}  \mu \left[  (Y^{n})^T   \sum_{k=0}^m \mu( \delta^{n,k} )  Y^{n,m}  \right],
\\
E_7 & =  \sum_{n=0}^{N-1}  \mu [ ( Y^{n} )^T   \mu( \delta^{n,0} )  Y^n  ].
\end{align*}
\end{lem}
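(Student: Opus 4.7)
The plan is to treat the two ingredients $\mu[W^T\nabla f(W)]$ and $\mu[\Delta f(W)]$ separately and then combine them. The first I rewrite as
\begin{equation*}
\mu[W^T\nabla f(W)] = \sum_{n=0}^{N-1}\mu[(Y^n)^T \nabla f(W)],
\end{equation*}
and, exploiting $\mu(Y^n)=0$ together with the boundary fact $W^{n,N-1}=0$ for $0\le n<N$, insert the telescoping identity
\begin{equation*}
\nabla f(W) - \nabla f(0) = [\nabla f(W) - \nabla f(W^{n,0})] + \sum_{m=1}^{N-1}[\nabla f(W^{n,m-1}) - \nabla f(W^{n,m})],
\end{equation*}
noting that $(Y^n)^T\nabla f(0)$ has zero mean. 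Each increment on the right is rewritten via the fundamental theorem of calculus as $\int_0^1 D^2 f(W^{n,m}+uY^{n,m})Y^{n,m}\,du$, using the convention $Y^{n,0}=Y^n$ (so that $W^{n,0}+Y^{n,0}=W$), and then split as $D^2 f(W^{n,m}+uY^{n,m}) = D^2 f(W^{n,m}) + \delta^{n,m}(u)$. The $\delta^{n,m}(u)$-contributions collect exactly into $-E_1-E_2$, leaving the remainder
\begin{equation*}
R := \sum_{n=0}^{N-1}\sum_{m=0}^{N-1}\mu\bigl[(Y^n)^T D^2 f(W^{n,m})\, Y^{n,m}\bigr].
\end{equation*}

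For $\mu[\Delta f(W)]$ I exploit the normalisation $\mathrm{Cov}(W) = I$. Writing $\delta_{rs} = \sum_{n,m}\mu(Y^n_r Y^m_s)$ gives
\begin{equation*}
\mu[\Delta f(W)] = \sum_{r,s}\delta_{rs}\,\mu[f_{rs}(W)] = \sum_{n}\mu\bigl[(Y^n)^T\mu(D^2 f(W))\, W\bigr]
\end{equation*}
(the final equality since $\mu(D^2 f(W))$ is a constant matrix), and combined with the identity $\sum_{m=0}^{N-1} Y^{n,m} = W$ this becomes
\begin{equation*}
\mu[\Delta f(W)] = \sum_{n=0}^{N-1}\sum_{m=0}^{N-1}\mu\bigl[(Y^n)^T\mu(D^2 f(W))\, Y^{n,m}\bigr].
\end{equation*}

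To assemble the decomposition I subtract $R$ from the previous display and split the middle matrix as
\begin{equation*}
\mu(D^2 f(W)) - D^2 f(W^{n,m}) = \bigl[\mu(D^2 f(W)) - \mu(D^2 f(W^{n,m}))\bigr] - \overline{D^2 f(W^{n,m})}.
\end{equation*}
The two complementary telescopes
\begin{equation*}
\mu(D^2 f(W)) - \mu(D^2 f(W^{n,m})) = \sum_{k=0}^{m}\mu(\delta^{n,k}), \qquad \overline{D^2 f(W^{n,m})} = \sum_{k=m+1}^{N-1}\overline{\delta^{n,k}}
\end{equation*}
(using $W^{n,-1}=W$ and $W^{n,N-1}=0$) then identify the first group of contributions with $E_7$ at $m=0$ and $E_6$ at $m\ge 1$, and the second group with $E_5$ at $m=0$ and $E_3+E_4$ at $m\ge 1$. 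The further split at $k=2m$ separating $E_3$ from $E_4$ is algebraically inert at this stage; it is a choice of convenience with an eye on the downstream decorrelation estimates, where the regimes $k\le 2m$ and $k>2m$ typically call for different bounds.

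The main obstacle is bookkeeping rather than any genuine difficulty: one must ensure that the conventions $Y^{n,0}=Y^n$, $\delta^{n,0} = D^2 f(W) - D^2 f(W^{n,0})$, $W^{n,-1}=W$, $W^{n,N-1}=0$ are applied consistently, so that each telescope both starts and ends at a constant that is annihilated either by $\mu(Y^n)=0$ or by centering, and that all boundary cases in $n\pm m$ are absorbed cleanly into $Y^{n,m} = \sum_{|i-n|=m,\, 0\le i<N} Y^i$. Once these conventions are fixed, the identity $\mu[\Delta f(W) - W^T\nabla f(W)] = \sum_{i=1}^{7}E_i$ drops out of the algebra; no decorrelation or dynamical input is used at this stage, only the centering of $Y^n$ and the normalisation of $W$.
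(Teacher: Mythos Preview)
Your proof is correct. The paper does not actually include a proof of this lemma---it is quoted verbatim as Proposition~5.3 of \cite{leppanen2020sunklodas}---so there is no in-paper argument to compare against; your derivation (telescope $\nabla f(W)-\nabla f(0)$ along $W^{n,m}$, use $\operatorname{Cov}(W)=I$ to write $\mu[\Delta f(W)]=\mu[W^T\mu(D^2 f(W))W]$, and then split $\mu(D^2 f(W))-D^2 f(W^{n,m})$ into its centered and averaged telescopes) is the standard Sunklodas-type computation and matches what one expects the cited proof to be.
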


The following preliminary estimate 
is an immediate consequence 
of \eqref{eq:reduction}, \eqref{eq:solve_stein}, and Lemma \ref{lem:sunklodas}:

\begin{lem}\label{lem:prelim_bound} For any $\ve > 0$, 
\begin{align*}
d_c( \cL(W), \cN_d )
\le 4 d^{\frac14} \ve
+ \sup_{ f \in \mathfrak{F}_\ve }
\sum_{i=1}^7 |E_i(f)|,
\end{align*}
where $E_i(f)$ are as in Lemma 
\ref{lem:sunklodas}.
\end{lem}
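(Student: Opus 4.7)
The statement is essentially a bookkeeping consolidation of three results already in hand, so the plan is a short three-step chain.

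My first step is to apply Lemma \ref{lem:reduce} with $Y = W$, which immediately yields
\begin{equation*}
d_c(\cL(W),\cN_d) \le 4 d^{1/4}\ve + \sup_{C \in \cC} \bigl| \cN_d(h_{C,\ve}) - \mu(h_{C,\ve}(W)) \bigr|.
\end{equation*}
This reduces the problem to controlling, uniformly in $C$, the difference between the expectation of the smoothed indicator under $\cL(W)$ and under the target $\cN_d$.

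Next, for each fixed $C \in \cC$ I would invoke the Stein representation \eqref{eq:solve_stein}: the function $f_h$ defined by \eqref{eq:solution} with $h = h_{C,\ve}$ solves the Stein equation \eqref{eq:stein_eq}, so that
\begin{equation*}
\cN_d(h_{C,\ve}) - \mu(h_{C,\ve}(W)) = \mu\bigl[ \Delta f_{h}(W) - W^T \nabla f_{h}(W) \bigr].
\end{equation*}
Since $f_h$ belongs to $\mathfrak{F}_\ve$ by the definition of that class, taking absolute values and passing to the supremum over $C \in \cC$ turns the right-hand side into a supremum over $f \in \mathfrak{F}_\ve$ of $|\mu[\Delta f(W) - W^T \nabla f(W)]|$.

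The final step is simply to apply the decomposition of Lemma \ref{lem:sunklodas} termwise: for every $f \in \mathfrak{F}_\ve$,
\begin{equation*}
\bigl| \mu[\Delta f(W) - W^T \nabla f(W)] \bigr| = \Bigl| \sum_{i=1}^{7} E_i(f) \Bigr| \le \sum_{i=1}^{7} |E_i(f)|,
\end{equation*}
and combining this with the previous two displays gives the claimed bound. There is no genuine obstacle here; the content of the lemma is to package together the smoothing inequality, the Stein representation, and the seven-term decomposition into a single starting point for the analysis carried out in Section \ref{sec:proof}. The only thing to be careful about is that the class $\mathfrak{F}_\ve$ is large enough to contain $f_{h_{C,\ve}}$ for every $C \in \cC$, which is immediate from its definition.
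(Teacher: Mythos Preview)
Your proof is correct and follows exactly the same route as the paper, which states the lemma as an immediate consequence of \eqref{eq:reduction}, \eqref{eq:solve_stein}, and Lemma~\ref{lem:sunklodas}. There is nothing to add.
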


\section{Proof of Theorem \ref{thm:main}}\label{sec:proof}

Let $N \ge 1$, $(T_n)$, $\mu$, and $(\varphi_n)$ be as in Theorem \ref{thm:main}, and assume that
(C1) and (C2) in the same theorem hold. By Lemma \ref{lem:return_to_D}, there exists $\tilde{A}$ depending only on $A, K, K', \alpha$ such that 
\begin{align}\label{eq:return_to_cone_2}
	 \cP_{j, j + m - 1 }( \cD_{\alpha, A } ) \subset \cD_{\alpha,  \tilde{A} } \quad \forall j \ge 1, \: \forall 
	m \ge 0.
\end{align}

Throughout the proof, we denote by $\bfC \ge 1$ a system constant whose value is determined by  
$\Lambda$, $p$, $K$, $K'$, $\alpha$, $A$. The value of $\bfC$ is allowed to change from one display to the next.
Given two functions $f : S \to \bR$ and  $g : S \to \bR_{+}$ defined on a set $S$, we write
$f(x) \lesssim g(x)$
if there exists an absolute constant $C > 0$ such that $f(x) \le C g(x)$ for all $x \in S$.
Moreover, we write $f(x) = O(g(x))$ if $|f(x)| \lesssim  g(x) $. Finally, we denote by $w_r$
the $r$th
component of a vector $w \in \bR^d$ ($r = 1,\ldots, d$).

\subsection{Induction} Recall that, for $0 \le \delta_1 \le \delta_2 \le 1$,
\begin{align}
\begin{split}\label{eq:notations-1}
W_N(\delta_1, \delta_2) &= \Sigma_N^{-1/2}(\delta_1, \delta_2)  S_N( \delta_1, \delta_2 ), \quad 
\Sigma_N( \delta_1, \delta_2 ) = \mu(  S_N( \delta_1, \delta_2 ) \otimes  S_N( \delta_1, \delta_2 )  ), \\ 
S_N( \delta_1, \delta_2) &= \sum_{ \delta_1 N \le n < \delta_2 N } \varphi_n \circ \cT_n.
\end{split}
\end{align}
To prepare for the inductive argument in \cite{fang2016multivariate, gotze1991rate, raic2019multivariate}, define 
\begin{align}\label{eq:iso_D}
	\fD = \fD( N ) = \sup \,  ( \delta_2 - \delta_1 )^{3 K_0 / 2}  \:   \frac{ d_c( \cL(  W_N(\delta_1, \delta_2) , 
		\cN_d   )  }{   \max \{  N  \Vert \Sigma^{-1/2}_N(\delta_1, \delta_2)  \Vert_s^3, 
		 \Vert \Sigma^{-1/2}_N(\delta_1, \delta_2)  \Vert_s  \}   },
\end{align}
where the supremum is taken over all $0 \le \delta_1 \le  \delta_2 \le 1$ such that 
$ \lambda_{ \min } ( \Sigma_N(  \delta_1, \delta_2 ) ) > 0$. Note that $\fD(N) < \infty$ because 
there are only finitely many terms 
$$
\frac{ d_c( \cL(  W_N(\delta_1, \delta_2) , 
	\cN_d   )  }{   \max \{  N  \Vert \Sigma^{-1/2}_N(\delta_1, \delta_2)  \Vert_s^3, 
	\Vert \Sigma^{-1/2}_N(\delta_1, \delta_2)  \Vert_s  \}   }
$$
included in \eqref{eq:iso_D}, one of them being 
$ d_c( \cL(  W_N ) , \cN_d   )  /   \max \{  N  \Vert \Sigma^{-1/2}_N \Vert_s^3, 
\Vert \Sigma^{-1/2}_N  \Vert_s  \}$ by our assumption. The aim is to derive an upper bound on 
$\fD(N)$ independent of $N$. To this end, 
we fix $0 \le \delta_1  <  \delta_2 \le 1$ such that 
$ \lambda_{ \min } ( \Sigma_N(  \delta_1, \delta_2 ) ) > 0$. In the sequel, we shall use the following notation 
for convenience:
\begin{align}\label{eq:notations}
	\begin{split}
	&X^n = \varphi_n \circ \cT_n, \quad Y^n = \mathbf{1}_{ \delta_1 N \le n < \delta_2 N  } b^{-1}_N(\delta_1, \delta_2) X^n , \quad 
	b_N(\delta_1, \delta_2) = \Sigma^{1/2}_N(\delta_1, \delta_2), \\ 
	&\bar{b} = \max \{  N  \Vert \Sigma^{-1/2}_N(\delta_1, \delta_2)  \Vert_s^3, 
	\Vert \Sigma^{-1/2}_N(\delta_1, \delta_2)  \Vert_s  \}.
	\end{split}
\end{align}
Note that $W_N(\delta_1, \delta_2) = \sum_{n=0}^{N-1} Y^n$. For brevity, we will omit the 
dependencies on $N$ and $\delta_1, \delta_2$ from our notation, writing 
$W$ in place of $W_N(\delta_1, \delta_2)$, $\Sigma$ in place of $\Sigma_N(\delta_1, \delta_2)$, etc.

First, suppose that $\lambda_{ \max }(   \Sigma  ) \le C_0' | \delta_2 - \delta_1 |^{-K_0}$. Then, 
we have the trivial estimate 
\begin{align}\label{eq:trivial}
	&d_c( \cL(  W), 
	\cN_d   ) \le (C_0')^{3/2} | \delta_2 - \delta_1 |^{ - 3 K_0 / 2 } N \Vert b^{-1} \Vert_s^3.
\end{align}
From now on, we assume that 
$$
\lambda_{ \max }(   \Sigma_N (  \delta_1, \delta_2  )  ) > C_0' ( \delta_2 - \delta_1 )^{-K_0}.
$$
By (C1) and (C2), we then have that 
\begin{align}\label{eq:cc}
	\begin{split}
	&\lambda_{ \max }(   \Sigma_N (  \delta_1, \delta_2  )  ) 
	\le C_0 
	\lambda_{ \min }(   \Sigma_N (  \delta  , \delta_2  )  )   \: \text{ if $| \delta_2 - \delta | \ge | \delta - \delta_1 |$},  \\
	&\lambda_{ \max }(   \Sigma_N (  \delta_1, \delta_2  )  ) 
	\le C_0 
	\lambda_{ \min }(   \Sigma_N (  \delta_1  , \delta  )  )   \: \text{ if $| \delta_2 - \delta | < | \delta - \delta_1 |$}.
	\end{split}
\end{align}
We will derive an upper bound on $d_c( \cL(  W), 
\cN_d   ) $ by controlling each term $E_i$ in Lemma \ref{lem:sunklodas}. This step is technical and occupies 
the remainder of the proof.

\subsection{Decomposition of $E_i$}\label{sec:E1}  Let 
$\ve \in (0, 1)$ and $f = f_h \in \mathfrak{F}_\ve$, 
where $h = h_{C,\ve}$ for some 
$C \in \cC$. Drawing inspiration from \cite{fang2016multivariate}, we start  
by decomposing $E_1 + E_2$ in Lemma \ref{lem:sunklodas} as follows:
\begin{align*}
	E_1 + E_2 &= -  \int_0^1 \sum_{n=0}^{N-1}
	\sum_{m=1}^{N-1}   
	\, \mu[  (Y^{n})^T \delta^{n,m}(u) Y^{n,m}  
	]   \, du -  \int_0^1 \sum_{n=0}^{N-1}   \mu[ (Y^{n})^T \delta^{n,0}(u) Y^n ] \, du \\ 
	&= -  \int_0^1 \sum_{n=0}^{N-1} \sum_{m=1}^{N-1}
	\sum_{r,s=1}^d \mu  \biggl\{   ( 
	f_{rs}(W^{n,m} + u 
	Y^{n,m}) - f_{rs}(W^{n,m})
	) 
	Y^n_r  Y^{n,m}_s \biggr\}  \, du\\ 
	&-  \int_0^1 \sum_{n=0}^{N-1}
	\sum_{r,s=1}^d \mu  \biggl\{   ( 
	f_{rs}(W^{n,0} + u  
	Y^{n,0}) - f_{rs}(W^{n,0})
	) 
	Y^n_r  Y^{n}_s \biggr\}  \, du.
	\end{align*}
	Recalling \eqref{eq:solution}, we thus have 
	\begin{align*}
	E_1 + E_2 &= -  \int_0^1 \sum_{n=0}^{N-1} \sum_{m=1}^{N-1}
	\sum_{r,s=1}^d \int_0^1
	\mu \biggl\{   ( 
	g_{rs}(W^{n,m} + u 
	Y^{n,m}, \tau) - 
	g_{rs}(W^{n,m}, \tau)
	) 
	Y^n_r  Y^{n,m}_s \biggr\} \, d\tau  \, du\\ 
	&-  \int_0^1 \sum_{n=0}^{N-1}
	\sum_{r,s=1}^d \int_0^1
	\mu \biggl\{   ( 
	g_{rs}(W^{n,0} + u 
	Y^n, \tau) - 
	g_{rs}(W^{n,0}, \tau)
	) 
	Y^n_r  Y^{n}_s \biggr\} \, d\tau \, du  \\ 
	&=  \int_0^1 \biggl[   \int_{0}^{ \ve^2 } R_{1}(\tau,u)  \, d \tau  +  \int_{\ve^2}^{ 1 } R_{1}(\tau,u)  \, d \tau 
	+  \int_{0}^{ \ve^2 } R_{2}(\tau,u)  \, d \tau  +  \int_{\ve^2}^{ 1 } R_{2}(\tau,u)  \, d \tau \biggr] \, d u,
\end{align*}
where 
\begin{align*}
R_1 = R_1(\tau, u) &= - \sum_{n=0}^{N-1} 
\sum_{m=1}^{N-1}  \sum_{r,s=1}^d \mu \biggl\{    \gamma^{n,m}_{r,s}(u, \tau)
Y^n_r  Y^{n,m}_s \biggr\},  \\
R_2 = R_2(\tau, u) &= - \sum_{n=0}^{N-1}   \sum_{r,s=1}^d  \mu \biggl\{    \gamma^{n,0}_{r,s}(u, \tau)
Y^n_r  Y^{n}_s \biggr\}, 
\end{align*}
and
\begin{align*}
	\gamma^{n,k}_{r,s}(u, \tau) = g_{rs}(W^{n,k} + u 
	Y^{n,k}, \tau) - 
	g_{rs}(W^{n,k}, \tau).
\end{align*}
Similarly, for $3 \le i \le 7$, we decompose $E_i = \int_0^{\ve^2} R_i(\tau) \, d \tau + \int_{\ve^2}^1 R_i(\tau) \, d \tau$, 
where 
\begin{align*}
	R_3 &=  - \sum_{n=0}^{N-1}\sum_{m=1}^{N-1} \sum_{k=m+1}^{2m} \sum_{r,s=1}^d  \mu \biggl\{  
	\overline{ \gamma^{n,k}_{r,s}(1, \tau) } 
	Y^n_r  Y^{n,m}_s \biggr\}, \\ 
		R_4 &=  - \sum_{n=0}^{N-1}\sum_{m=1}^{N-1} \sum_{k=2m+1}^{N-1} \sum_{r,s=1}^d  	\mu \biggl\{  
	\overline{ \gamma^{n,k}_{r,s}(1, \tau) } 
	Y^n_r  Y^{n,m}_s
	\biggr\}, \quad 
	R_5 =  - \sum_{n=0}^{N-1} \sum_{m=1}^{N-1} \sum_{r,s=1}^d  \mu \biggl\{  
	\overline{ \gamma^{n,m}_{r,s}(1, \tau) } 
	Y^n_r  Y^{n}_s
	\biggr\}, \\
	 R_{6} &=  \sum_{n=0}^{N-1} \sum_{m=1}^{N-1}  \sum_{k=0}^m  \sum_{r,s=1}^d  
	 \mu ( \gamma^{n,k}_{r,s}(1, \tau)  )  
	 \mu( Y^n_r  Y^{n,m}_s ), \quad 
	 R_{7} =  \sum_{n=0}^{N-1}   \sum_{r,s=1}^d  
	 \mu ( \gamma^{n,0}_{r,s}(1, \tau)  ) 
	 \mu( Y^n_r  Y^{n}_s ).
\end{align*}

\subsection{Decomposition of $R_i$}\label{sec:Ri_decomp} We will derive a decomposition of
each $R_i$, $1 \le i \le 7$, to facilitate controlling $\int_{\ve^2}^1 R_i(\tau) \, d \tau$ using 
Lemma \ref{lem:normal_approx} together with 
the decorrelation properties stated in Section \ref{sec:decor}.
For convenience and brevity, we introduce the following notation:
\begin{align*}
&F_{r,s,t}^{n,m,k} = Y^n_r
Y^{n,m}_s  Y^{n,k}_t, \quad G_{r,s}^{n,m} = Y_r^n Y_s^{n,m}, \\ 
&\eta^{n,m,k}(v, \tau, z) = h(\sqrt{1 - 
	\tau}(W^{n,m} + vY^{n,m}) - 
\sqrt{\tau}
z)   - h(\sqrt{1 - 
	\tau}W^{n,k}  - 
\sqrt{\tau}
z).
\end{align*}

\subsubsection{Decomposition of $R_4$} We first derive a decomposition of $R_4$, which is the most complicated of all the error terms $R_i$ to treat. 
By employing the first equality in
\eqref{eq:diffs} and subsequently applying the formula
$f(x + a) - f(x) = \int_0^1 a^T 
\nabla f(x + va) \, dv$, we can express
\begin{align}\label{eq:R1_overline_decomp}
	\begin{split}
		R_4 &= - \sum_{n=0}^{N-1}\sum_{m=1}^{N-1} \sum_{k=2m+1}^{N-1} \sum_{r,s=1}^d  	\mu \biggl\{  
		\overline{ \gamma^{n,k}_{r,s}(1, \tau) } 
		Y^n_r  Y^{n,m}_s
		\biggr\}  \\
		&= \frac{1}{2 \tau } \sum_{n=0}^{N-1} 
		\sum_{m=1}^{N-1}  \sum_{k=2m+1}^{N-1}  \sum_{r,s=1}^d \mu \biggl\{ \int_{\bR^d} 
		\biggl( 
		\overline{h( \sqrt{1 - 
			\tau}(W^{n,k} + Y^{n,k}) - 
		\sqrt{\tau}
		z )} \\
		&- 
		\overline{h( \sqrt{1 - \tau}W^{n,k} - \sqrt{\tau}
		z )}
		\biggr) 
		\phi_{rs}(z) \, dz  \cdot Y^n_r  Y^{n,m}_s  \biggr\}  \\ 
		&=   \int_0^1  \frac{\sqrt{1-\tau} }{2\tau}   \sum_{n=0}^{N-1} 
		\sum_{m=1}^{N-1} 
		\sum_{k=2m+1}^{N-1}  
		 \sum_{r,s, t =1}^d  
		\mu \biggl\{ \int_{\bR^d}  
		\overline{h_t(\sqrt{1 - 
			\tau}(W^{n,k} + vY^{n,k}) - 
		\sqrt{\tau}
		z)  Y_{t}^{n,k}}   \\
		&\times  \phi_{rs}(z)  \, dz \cdot Y_r^n Y_s^{n,m}  \biggr\} \, dv.
	\end{split}
\end{align}
Using integration by parts, we find that 
\begin{align}\label{eq:int_by_parts}
	\begin{split}
		&\int_{\bR^d}  
		h_t(\sqrt{1 - 
			\tau}(W^{n,m} + uvY^{n,m}) - 
		\sqrt{\tau}
		z) \phi_{rs}(z)  \, dz \\ 
		&=  \frac{1}{\sqrt{\tau}} \int_{\bR^d}  h(\sqrt{1 - 
			\tau}(W^{n,m} + uvY^{n,m}) - 
		\sqrt{\tau}
		z)    \phi_{rst}(z) \, dz.
	\end{split}
\end{align}
Consequently,
\begin{align}\label{eq:R4_first}
	\begin{split}
	R_4 &=   \int_0^1  \frac{\sqrt{1-\tau} }{2\tau^{3/2}}   \sum_{n=0}^{N-1} 
	\sum_{m=1}^{N-1} 
	\sum_{k=2m+1}^{N-1}  
	\sum_{r,s, t =1}^d  
	 \int_{\bR^d}
	 \mu \biggl\{   
	\overline{h(\sqrt{1 - 
			\tau}(W^{n,k} + vY^{n,k}) - 
		\sqrt{\tau}
		z)  Y_{t}^{n,k}} \\
		&\times Y_r^n Y_s^{n,m}  \biggr\}    
	 \phi_{rst}(z)  \, dz  \, dv = R_4' + Q_4 - S_4,
	 \end{split}
\end{align}
where
\begin{align}
	\begin{split}\label{eq:R4'}
	R'_4 &=  \int_0^1  \frac{\sqrt{1-\tau} }{2\tau^{3/2}}   \sum_{n=0}^{N-1} 
	\sum_{m=1}^{N-1} 
	\sum_{k=2m+1}^{N-1}  
	\sum_{r,s, t =1}^d  \\
	&\int_{\bR^d}  
	\mu \biggl\{
	\overline{ [  h(\sqrt{1 - 
			\tau}(W^{n,k} + vY^{n,k}) - 
		\sqrt{\tau}
		z)  -  h(\sqrt{1 - 
			\tau}  W^{n, 2k} - 
		\sqrt{\tau}
		z)  ]Y_{t}^{n,k}}    
	Y_r^n Y_s^{n,m}  \biggr\} \\
	&\times  \phi_{rst}(z)  \, dz  \, dv \\
	&= \int_0^1  \frac{\sqrt{1-\tau} }{2\tau^{3/2}}   \sum_{n=0}^{N-1} 
	\sum_{m=1}^{N-1} 
	\sum_{k=2m+1}^{N-1}  
	\sum_{r,s, t =1}^d  \int_{\bR^d}   \mu \biggl\{ 
	\overline{ \eta^{n,k, 2k}(v, \tau, z)
		Y_{t}^{n,k}}   G_{r,s}^{n,m}  \biggr\}   \phi_{rst}(z)  \, dz  \, dv,
	\end{split}
\end{align}
\begin{align*}
Q_4 &=  \frac{\sqrt{1-\tau} }{2\tau^{3/2}}   \sum_{n=0}^{N-1} 
\sum_{m=1}^{N-1} 
\sum_{k=2m+1}^{N-1}  
\sum_{r,s, t =1}^d  
\int_{\bR^d}
\mu \biggl\{   h(\sqrt{1 - 
		\tau} W^{n,2k}  - 
	\sqrt{\tau}
	z)  F_{r,s,t}^{n,m,k}  \biggr\}    
\phi_{rst}(z)  \, dz,
\end{align*}
and
\begin{align*}
S_4 = \frac{\sqrt{1-\tau} }{2\tau^{3/2}}   \sum_{n=0}^{N-1} 
\sum_{m=1}^{N-1} 
\sum_{k=2m+1}^{N-1}  
\sum_{r,s, t =1}^d  
\int_{\bR^d}
\mu \biggl\{   h(\sqrt{1 - 
	\tau} W^{n,2k}  - 
\sqrt{\tau}
z)  Y_{t}^{n,k} \biggr\} \mu ( G_{r,s}^{n,m} )
\phi_{rst}(z)  \, dz.
\end{align*}
Since $\mu(Y_t^{n,k}) = 0$, we can express $S_4$ as the following telescopic sum:
\begin{align}\label{eq:S4}
	\begin{split}
	S_4 &= \frac{\sqrt{1-\tau} }{2\tau^{3/2}}   \sum_{n=0}^{N-1} 
	\sum_{m=1}^{N-1} 
	\sum_{k=2m+1}^{N-1}  
	\sum_{r,s, t =1}^d  
	\int_{\bR^d}
	\mu \biggl\{   \biggl( h(\sqrt{1 - 
		\tau} W^{n,2k}  - 
	\sqrt{\tau}
	z)  -   h( - \sqrt{\tau} z )  \biggr) Y_{t}^{n,k} \biggr\} \\
	&\times \mu ( G_{r,s}^{n,m} )
	\phi_{rst}(z)  \, dz \\
	&= \frac{\sqrt{1-\tau} }{2\tau^{3/2}}   \sum_{n=0}^{N-1} 
	\sum_{m=1}^{N-1} 
	\sum_{k=2m+1}^{N-1} \sum_{ \ell = 2 }^{N-1}
	\sum_{r,s, t =1}^d  
	\int_{\bR^d}
	\mu \biggl\{   \biggl( h(\sqrt{1 - 
		\tau} W^{n, k \ell }  - 
	\sqrt{\tau}
	z)  \\
	&-   h(   \sqrt{1 - 
		\tau} W^{n, k ( \ell + 1) }  - \sqrt{\tau} z )  \biggr)
	  Y_{t}^{n,k} \biggr\} \mu ( G_{r,s}^{n,m} )
	\phi_{rst}(z)  \, dz \\
	&= \frac{\sqrt{1-\tau} }{2\tau^{3/2}}   \sum_{n=0}^{N-1} 
	\sum_{m=1}^{N-1} 
	\sum_{k=2m+1}^{N-1} \sum_{ \ell = 2 }^{N-1}
	\sum_{r,s, t =1}^d  
	\int_{\bR^d}
	\mu \biggl\{   \eta^{n,k\ell, k( \ell + 1)}(0, \tau, z)
	Y_{t}^{n,k} \biggr\} \mu ( G_{r,s}^{n,m} )
	\phi_{rst}(z)  \, dz.
\end{split}
\end{align}

Next, we write $Q_4 = \bar{Q}_4 + \tilde{Q}_4$, where 
\begin{align*}
	\bar{Q}_{4} &=  \frac{\sqrt{1-\tau} }{2\tau^{3/2}}   \sum_{n=0}^{N-1} 
	\sum_{m=1}^{N-1} 
	\sum_{k=2m+1}^{N-1}  
	\sum_{r,s, t =1}^d  
	\int_{\bR^d}
	\mu \biggl\{   \overline{h(\sqrt{1 - 
		\tau} W^{n,2k}  - 
	\sqrt{\tau}
	z)}   F_{r,s,t}^{n,m,k}  \biggr\}    
	\phi_{rst}(z)  \, dz
\end{align*}
and 
\begin{align*}
	\tilde{Q}_{4} &=  \frac{\sqrt{1-\tau} }{2\tau^{3/2}}   \sum_{n=0}^{N-1} 
	\sum_{m=1}^{N-1} 
	\sum_{k=2m+1}^{N-1}  
	\sum_{r,s, t =1}^d  
	\int_{\bR^d}
	\mu \biggl\{   h(\sqrt{1 - 
		\tau} W^{n,2k}  - 
	\sqrt{\tau}
	z)  \biggr\} \mu( F_{r,s,t}^{n,m,k} )
	\phi_{rst}(z)  \, dz.
\end{align*}
Since $h(- \sqrt{\tau } z )$ is nonrandom, we have $\overline{h(- \sqrt{\tau } z )} = 
h(- \sqrt{\tau } z ) - \mu( h(- \sqrt{\tau } z ) ) = 0$. Exploiting this identity together with 
$W^{ n, N k } = 0$ for $k \ge 1$,  we can also express $\bar{Q}_{4}$ as 
a telescopic sum:
\begin{align}\label{eq:Q4_bar}
\begin{split}
\bar{Q}_4 &= 
\frac{\sqrt{1-\tau} }{2\tau^{3/2}}   \sum_{n=0}^{N-1} 
\sum_{m=1}^{N-1} 
\sum_{k=2m+1}^{N-1}  
\sum_{r,s, t =1}^d  
\int_{\bR^d}
\mu \biggl\{  \biggl(   \overline{h(\sqrt{1 - 
		\tau} W^{n,2k} 
	- 
	\sqrt{\tau}
	z)}  \\
	&- \overline{h( - \sqrt{\tau} z  ) } \biggr)  F_{r,s,t}^{n,m,k}  \biggr\}    
\phi_{rst}(z)  \, dz \\
&= \frac{\sqrt{1-\tau} }{2\tau^{3/2}}   \sum_{n=0}^{N-1} 
\sum_{m=1}^{N-1} 
\sum_{k=2m+1}^{N-1} \sum_{ \ell = 2 }^{N - 1}  
\sum_{r,s, t =1}^d  
\int_{\bR^d}
\mu \biggl\{  \biggl(   \overline{h(\sqrt{1 - 
		\tau} W^{n, k \ell } 
	- 
	\sqrt{\tau}
	z)}  \\
&- \overline{h(   \sqrt{1 - 
		\tau} W^{n, k ( \ell + 1) }  - \sqrt{\tau} z  ) } \biggr)  F_{r,s,t}^{n,m,k}  \biggr\}    
\phi_{rst}(z)  \, dz \\
&= \frac{\sqrt{1-\tau} }{2\tau^{3/2}}   \sum_{n=0}^{N-1} 
\sum_{m=1}^{N-1} 
\sum_{k=2m+1}^{N-1} \sum_{ \ell = 2 }^{N - 1}  
\sum_{r,s, t =1}^d  
\int_{\bR^d}
\mu \biggl\{  
\overline{ \eta^{ n, k \ell, k( \ell + 1) }(0, \tau, z) }  F_{r,s,t}^{n,m,k}
\biggr\}    
\phi_{rst}(z)  \, dz.
\end{split}
\end{align}

To handle $\tilde{Q}_4$, 
let $\tilde{Z} \sim \cN_d$ be a random variable independent of all other involved variables.  We assume without loss of generality that 
$\tilde{Z}$ is defined on $(M, \cF, \mu)$, and decompose $\tilde{Q}_4$ into three terms as follows:
\begin{align*}
	\tilde{Q}_4 &= 
	\frac{\sqrt{1-\tau} }{2\tau^{3/2}}   \sum_{n=0}^{N-1} 
	\sum_{m=1}^{N-1} 
	\sum_{k=2m+1}^{N-1}  
	\sum_{r,s, t =1}^d  
	\int_{\bR^d}
	\mu \biggl\{   h(\sqrt{1 - 
		\tau} W^{n,2k}  
		- 
	\sqrt{\tau}
	z) \\
	&-  h( \sqrt{1 - \tau } W - \sqrt{\tau} z )  \biggr\} \mu( F_{r,s,t}^{n,m,k} )
	\phi_{rst}(z)  \, dz \\
	&+\frac{\sqrt{1-\tau} }{2\tau^{3/2}}   \sum_{n=0}^{N-1} 
	\sum_{m=1}^{N-1} 
	\sum_{k=2m+1}^{N-1}  
	\sum_{r,s, t =1}^d  
	\int_{\bR^d}
	\mu \biggl\{   h(\sqrt{1 - 
		\tau} W  - 
	\sqrt{\tau} 
	z) \\
	&-  h( \sqrt{1 - \tau } \tilde{Z} - \sqrt{\tau} z )  \biggr\} \mu( F_{r,s,t}^{n,m,k} )
	\phi_{rst}(z)  \, dz \\
	&+\frac{\sqrt{1-\tau} }{2\tau^{3/2}}   \sum_{n=0}^{N-1} 
	\sum_{m=1}^{N-1} 
	\sum_{k=2m+1}^{N-1}  
	\sum_{r,s, t =1}^d  
	\int_{\bR^d}
	\mu \biggl\{  h( \sqrt{1 - \tau } \tilde{Z} - \sqrt{\tau} z )  \biggr\} \mu( F_{r,s,t}^{n,m,k} )
	\phi_{rst}(z)  \, dz \\
	&= I + II + III.
\end{align*}
Observe that 
\begin{align*}
	I = \frac{\sqrt{1-\tau} }{2\tau^{3/2}}   \sum_{n=0}^{N-1} 
	\sum_{m=1}^{N-1} 
	\sum_{k=2m+1}^{N-1}  
	\sum_{r,s, t =1}^d  
	\int_{\bR^d}
	\mu \biggl\{   
	\eta^{n,2k, -1}(0, \tau, z)
	\biggr\} \mu( F_{r,s,t}^{n,m,k} )
	\phi_{rst}(z)  \, dz
\end{align*}
and, by the third equality in \eqref{eq:diffs},
\begin{align*}
	III
	&= \sum_{n=0}^{N-1}
	\sum_{m=1}^{N-1} 
	\sum_{k=2m+1}^{N-1}  
	\sum_{r,s, t =1}^d  
	\cN_d[ g_{rst}( \cdot, \tau )  ] \mu( F_{r,s,t}^{n,m,k} ). 
\end{align*}
Ultimately, we will exploit \eqref{eq:diff_gN} to control $III$. We conclude that
\begin{align}\label{eq:Q4_tilde}
	\begin{split}
	\tilde{Q}_4 &=  \frac{\sqrt{1-\tau} }{2\tau^{3/2}}   \sum_{n=0}^{N-1} 
	\sum_{m=1}^{N-1} 
	\sum_{k=2m+1}^{N-1}  
	\sum_{r,s, t =1}^d  
	\int_{\bR^d}
	\mu \biggl\{   
	\eta^{n,2k, -1}(0, \tau, z)
	\biggr\} \mu( F_{r,s,t}^{n,m,k} )
	\phi_{rst}(z)  \, dz \\
	&+\frac{\sqrt{1-\tau} }{2\tau^{3/2}}   \sum_{n=0}^{N-1} 
	\sum_{m=1}^{N-1} 
	\sum_{k=2m+1}^{N-1}  
	\sum_{r,s, t =1}^d  
	\int_{\bR^d}
	\mu \biggl\{   h(\sqrt{1 - 
		\tau} W  - 
	\sqrt{\tau} 
	z) \\
	&-  h( \sqrt{1 - \tau } \tilde{Z} - \sqrt{\tau} z )  \biggr\} \mu( F_{r,s,t}^{n,m,k} )
	\phi_{rst}(z)  \, dz \\
	&+ \sum_{n=0}^{N-1}
	\sum_{m=1}^{N-1} 
	\sum_{k=2m+1}^{N-1}  
	\sum_{r,s, t =1}^d  
	\cN_d[ g_{rst}( \cdot, \tau )  ] \mu( F_{r,s,t}^{n,m,k} ). 
	\end{split}
\end{align}

With this, the 
decomposition of $R_{4}$ is complete: 
\begin{align}\label{eq:decomp_R1}
R_4 = R_4' + \bar{Q}_4 + \tilde{Q}_4 - S_4,
\end{align}
where $R_4', \bar{Q}_4, \tilde{Q}_4, S_4$ are given by \eqref{eq:R4'}, \eqref{eq:Q4_bar}, \eqref{eq:Q4_tilde}, and \eqref{eq:S4}, respectively.

\subsubsection{Decomposition of $R_i$ for $i \neq 4$} 
The remaining terms $R_i$, $i \neq 4$, are treated in a similar manner to $R_4$, with minor necessary modifications. 
As for $R_1$, following the steps leading to \eqref{eq:R4_first}, we see that 
\begin{align*}
	R_1 
	&=  \int_0^1 u \frac{\sqrt{1-\tau} }{2\tau^{3/2}}  \sum_{n=0}^{N-1} 
	\sum_{m=1}^{N-1}  \sum_{r,s, t =1}^d  
	\int_{\bR^d}   \mu \biggl\{ 
	h(\sqrt{1 - 
		\tau}(W^{n,m} + uvY^{n,m}) - 
	\sqrt{\tau}
	z)  F_{r,s,t}^{n,m,m}   \biggr\} \\
	&\times \phi_{rst}(z)  \, dz \, dv \\
	&= R_1' + \bar{Q}_1 + \tilde{Q}_1,
\end{align*}
where 
\begin{align}\label{eq:R1'}
	\begin{split}
	R'_{1} &= \int_0^1 u \frac{\sqrt{1-\tau} }{2\tau^{3/2}}  \sum_{n=0}^{N-1} 
	\sum_{m=1}^{N-1}  \sum_{r,s, t =1}^d  
	\int_{\bR^d}   \mu \biggl\{ 
	\biggl[  h(\sqrt{1 - 
		\tau}(W^{n,m} + uvY^{n,m}) - 
	\sqrt{\tau}
	z)  \\
	&- h(\sqrt{1 - 
		\tau}W^{n,m}  - 
	\sqrt{\tau}
	z)  \biggr] F_{r,s,t}^{n,m,m}   \biggr\} 
	 \phi_{rst}(z)  \, dz \, dv  \\
	&= \int_0^1 u \frac{\sqrt{1-\tau} }{2\tau^{3/2}}  \sum_{n=0}^{N-1} 
	\sum_{m=1}^{N-1}  \sum_{r,s, t =1}^d  
	\int_{\bR^d}   \mu \biggl\{ 
	\eta^{n,m,m}(uv, \tau, z)
	F_{r,s,t}^{n,m,m}   \biggr\} 
	\phi_{rst}(z)  \, dz \, dv,
	\end{split}
\end{align}
\begin{align*}
\bar{Q}_{1} &=  u \frac{\sqrt{1-\tau} }{2\tau^{3/2}}  \sum_{n=0}^{N-1} 
\sum_{m=1}^{N-1}  \sum_{r,s, t =1}^d  
\int_{\bR^d}   \mu \biggl\{ 
\overline{h(\sqrt{1 - 
	\tau}W^{n,m}  - 
\sqrt{\tau}
z)}  F_{r,s,t}^{n,m,m}   \biggr\} \phi_{rst}(z)  \, dz,
\end{align*}
and 
\begin{align*}
	\tilde{Q}_{1} &= u \frac{\sqrt{1-\tau} }{2\tau^{3/2}}  \sum_{n=0}^{N-1} 
	\sum_{m=1}^{N-1}  \sum_{r,s, t =1}^d  
	\int_{\bR^d}   \mu \biggl\{ 
	h(\sqrt{1 - 
			\tau} W^{n,m}  - 
		\sqrt{\tau}
		z) \biggr\} \mu ( F_{r,s,t}^{n,m,m} ) \phi_{rst}(z)  \, dz.
\end{align*}
Further, we obtain the following representations of $\bar{Q}_1$ and $\tilde{Q}_1$ using manipulations similar to those in derivations of the formulas for $\bar{Q}_4$ and $\tilde{Q}_4$, respectively:
\begin{align}\label{eq:Q1_bar}
	\begin{split}
	\bar{Q}_1 
	&= u \frac{\sqrt{1-\tau} }{2\tau^{3/2}}     \sum_{n=0}^{N-1} 
	\sum_{m=1}^{N-1}  \sum_{r,s,t=1}^d  \int_{\bR^d}  
	\mu \biggl\{  \overline{  \eta^{n,m,2m}(0, \tau, z) }  F_{r,s,t}^{n,m,m} \biggr\}  \cdot \phi_{rst}(z)   \, dz \\
	&+ u \frac{\sqrt{1-\tau} }{2\tau^{3/2}}    \sum_{n=0}^{N-1} 
	\sum_{m=1}^{N-1}  \sum_{\ell = 2}^{N-1}  \sum_{r,s,t=1}^d  \int_{\bR^d}  
	\mu \biggl\{   \overline{ \eta^{n, m \ell, m ( \ell + 1)}(0, \tau, z)  }   F_{r,s,t}^{n,m,m} \biggr\}  \phi_{rst}(z)   \, dz,
	\end{split}
\end{align}
and
\begin{align}\label{eq:Q1_tilde}
\begin{split}
\tilde{Q}_1 &=  u \frac{\sqrt{1-\tau} }{2\tau^{3/2}}  \sum_{n=0}^{N-1} 
\sum_{m=1}^{N-1}  \sum_{r,s, t =1}^d  
\int_{\bR^d}   \mu \biggl\{ 
\eta^{ n,m, -1  }(0, \tau, z)
\biggr\} \mu ( F_{r,s,t}^{n,m,m} )  \cdot \phi_{rst}(z)  \, dz \\
&+  u \frac{\sqrt{1-\tau} }{2\tau^{3/2}}  \sum_{n=0}^{N-1} 
\sum_{m=1}^{N-1}  \sum_{r,s, t =1}^d  
\int_{\bR^d}   \mu \biggl\{ 
h(\sqrt{1 - 
	\tau}W - 
\sqrt{\tau}
z) \\
&-  h(\sqrt{1 - 
	\tau} \tilde{Z} 
- 
\sqrt{\tau}
z)  \biggr\} \mu ( F_{r,s,t}^{n,m,m} )  \cdot \phi_{rst}(z)  \, dz \\
&+ u \sum_{n=0}^{N-1} 
\sum_{m=1}^{N-1}  \sum_{r,s, t =1}^d  \cN_d [  g_{rst}( \cdot, \tau  )   ]  \mu ( F_{r,s,t}^{n,m,m} ).
\end{split}
\end{align}
With this, the 
decomposition of $R_{1}$ is complete: 
\begin{align}\label{eq:decomp_R1}
R_1 = R_1' + \bar{Q}_1 +  \tilde{Q}_1,
\end{align}
where $R_1'$, $\bar{Q}_1$, and $\tilde{Q}_1$ are given by \eqref{eq:R1'}, \eqref{eq:Q1_bar}, and
\eqref{eq:Q1_tilde}, respectively.

In the case of $R_2$, we have a decomposition similar to \eqref{eq:decomp_R1}:
\begin{align*}
	R_2 &= R_2' + \bar{Q}_2 +  \tilde{Q}_2,  \\
	R_2' &= \int_0^1 u \frac{\sqrt{1-\tau} }{2\tau^{3/2}}  \sum_{n=0}^{N-1} 
	\sum_{r,s, t =1}^d  
	\int_{\bR^d}   \mu \biggl\{ 
	\eta^{n,0,0}(uv, \tau, z)
	F_{r,s,t}^{n,0,0}   \biggr\} 
	\cdot \phi_{rst}(z)  \, dz \, dv, \\
	\bar{Q}_{2} &= u \frac{\sqrt{1-\tau} }{2\tau^{3/2}}    \sum_{n=0}^{N-1} 
	\sum_{\ell = 0}^{N-1}  \sum_{r,s,t=1}^d  \int_{\bR^d}  
	\mu \biggl\{   \overline{ \eta^{n,  \ell,  \ell + 1}(0, \tau, z)  }   F_{r,s,t}^{n,0,0} \biggr\}  \phi_{rst}(z)   \, dz,
\end{align*}
\begin{align*}
	\tilde{Q}_{2} &= u \frac{\sqrt{1-\tau} }{2\tau^{3/2}}  \sum_{n=0}^{N-1} 
\sum_{r,s, t =1}^d  
\int_{\bR^d}   \mu \biggl\{ 
\eta^{n,0,-1}(0,\tau, z)
 \biggr\} \mu ( F_{r,s,t}^{n,0,0} )  \cdot \phi_{rst}(z)  \, dz \\
&+ u \frac{\sqrt{1-\tau} }{2\tau^{3/2}}  \sum_{n=0}^{N-1} 
\sum_{r,s, t =1}^d  
\int_{\bR^d}   \mu \biggl\{ 
h(\sqrt{1 - 
	\tau}W - 
\sqrt{\tau}
z) \\
&-  h(\sqrt{1 - 
	\tau} \tilde{Z} 
- 
\sqrt{\tau}
z)  \biggr\} \mu ( F_{r,s,t}^{n,0,0} )  \cdot \phi_{rst}(z)  \, dz \\
&+ u \sum_{n=0}^{N-1}  \sum_{r,s, t =1}^d  \cN_d [  g_{rst}( \cdot, \tau  )   ]  \mu ( F_{r,s,t}^{n,0,0} ).
\end{align*}

Each of $R_6$ and $R_7$ admits a 
decomposition into two parts corresponding to
$R_4'$ and $S_4$ in the decomposition of $R_4$:
\begin{align*}
	R_6 &= R_6' + S_6, \\
	R_6' &= - \int_0^1  \frac{\sqrt{1-\tau} }{2\tau^{3/2}}   \sum_{n=0}^{N-1} 
	\sum_{m=1}^{N-1} 
	\sum_{k=0}^{m}  
	\sum_{r,s, t =1}^d  
	\int_{\bR^d}
	\mu \biggl\{   
	\eta^{n,k, 2m}(v, \tau, z) Y_{t}^{n,k}
	 \biggr\} 
	\mu ( G_{r,s}^{n,m} )     \phi_{rst}(z)  \, dz  \, dv,    \\
	S_6 &= - \frac{\sqrt{1-\tau} }{2\tau^{3/2}}   \sum_{n=0}^{N-1} 
	\sum_{m=1}^{N-1} 
	\sum_{k=0}^{m} \sum_{\ell = 2}^{N- 1}
	\sum_{r,s, t =1}^d  
	\int_{\bR^d}
	\mu \biggl\{   
	\eta^{n,m \ell, m ( \ell + 1)}(0, \tau, z)
	 Y_{t}^{n,k}  \biggr\} 
	\mu ( G_{r,s}^{n,m} )     \phi_{rst}(z)  \, dz
\end{align*}
and
\begin{align*}
	R_7 &= R_7' + S_7, \\
	R_7' &= - \int_0^1  \frac{\sqrt{1-\tau} }{2\tau^{3/2}}   \sum_{n=0}^{N-1} 
	\sum_{r,s, t =1}^d  
	\int_{\bR^d}
	\mu \biggl\{   
	\eta^{n,0, 1}(v, \tau, z) Y_{t}^{n}
	\biggr\} 
	\mu ( G_{r,s}^{n,0} )     \phi_{rst}(z)  \, dz  \, dv,    \\
	S_7 &=  \frac{\sqrt{1-\tau} }{2\tau^{3/2}}   \sum_{n=0}^{N-1} 
	\sum_{\ell = 1}^{N- 1}
	\sum_{r,s, t =1}^d  
	\int_{\bR^d}
	\mu \biggl\{   
	\eta^{n, \ell,  \ell + 1}(0, \tau, z)
	Y_{t}^{n}  \biggr\} 
	\mu ( G_{r,s}^{n,0} )     \phi_{rst}(z)  \, dz.
\end{align*}

In the case of $R_3$, we first decompose
\begin{align*}
	R_3 &= - \sum_{n=0}^{N-1}\sum_{m=1}^{N-1} \sum_{k=m+1}^{2m} \sum_{r,s=1}^d  \mu \biggl\{  
	\gamma^{n,k}_{r,s}(1, \tau) 
	Y^n_r  Y^{n,m}_s \biggr\} + \sum_{n=0}^{N-1}\sum_{m=1}^{N-1} \sum_{k=m+1}^{2m} \sum_{r,s=1}^d  \mu (
	\gamma^{n,k}_{r,s}(1, \tau)  )
	 \mu( Y^n_r  Y^{n,m}_s ) \\
	 &= Q_3 - Q_3'.
\end{align*}
We decompose $Q_3$ as in the case of $R_1$:
\begin{align*}
	Q_3 &= R_3' + \bar{Q}_3 + \tilde{Q}_3, \\
	R_3' &= \int_0^1  \frac{\sqrt{1-\tau} }{2\tau^{3/2}}  \sum_{n=0}^{N-1} 
	\sum_{m=1}^{N-1}  \sum_{k=m+1}^{2m}  \sum_{r,s, t =1}^d  
	\int_{\bR^d}   \mu \biggl\{ 
	\eta^{n,k, 3m }(v, \tau, z)
	F_{r,s,t}^{n,m,k}   \biggr\} 
	\cdot \phi_{rst}(z)  \, dz \, dv, \\
	\bar{Q}_3 &= 
	\frac{\sqrt{1-\tau} }{2\tau^{3/2}}    \sum_{n=0}^{N-1} 
	\sum_{m=1}^{N-1} \sum_{k=m+1}^{2m}   \sum_{\ell = 3}^{N-1}  \sum_{r,s,t=1}^d  \int_{\bR^d}  
	\mu \biggl\{   \overline{ \eta^{n, m \ell, m ( \ell + 1)}(0, \tau, z)  }   F_{r,s,t}^{n,m,k} \biggr\}  \phi_{rst}(z)   \, dz, \\
	\tilde{Q}_3 &=  \frac{\sqrt{1-\tau} }{2\tau^{3/2}}  \sum_{n=0}^{N-1} 
	\sum_{m=1}^{N-1} \sum_{k=m+1}^{2m}   \sum_{r,s, t =1}^d  
	\int_{\bR^d}   \mu \biggl\{ 
	\eta^{ n, 3m , -1  }(0, \tau, z)
	\biggr\} \mu ( F_{r,s,t}^{n,m,k} )  \cdot \phi_{rst}(z)  \, dz \\
	&+  \frac{\sqrt{1-\tau} }{2\tau^{3/2}}  \sum_{n=0}^{N-1} 
	\sum_{m=1}^{N-1} 
	\sum_{k=m+1}^{2m} 
	\sum_{r,s, t =1}^d  
	\int_{\bR^d}   \mu \biggl\{ 
	h(\sqrt{1 - 
		\tau}W - 
	\sqrt{\tau}
	z) \\
	&-  h(\sqrt{1 - 
		\tau} \tilde{Z} 
	- 
	\sqrt{\tau}
	z)  \biggr\} \mu ( F_{r,s,t}^{n,m,k} )  \cdot \phi_{rst}(z)  \, dz \\
	&+ \sum_{n=0}^{N-1} 
	\sum_{m=1}^{N-1} \sum_{k=m+1}^{2m}   \sum_{r,s, t =1}^d  \cN_d [  g_{rst}( \cdot, \tau  )   ]  \mu ( F_{r,s,t}^{n,m,k} ).
\end{align*}
For $Q_3'$, we have a decomposition similar to that of $R_6$:
\begin{align*}
	Q_3' &= R_3'' + S_3, \\
	R_3'' &= - \int_0^1  \frac{\sqrt{1-\tau} }{2\tau^{3/2}}   \sum_{n=0}^{N-1} 
	\sum_{m=1}^{N-1} 
	\sum_{k=m + 1}^{2m}  
	\sum_{r,s, t =1}^d  
	\int_{\bR^d}
	\mu \biggl\{   
	\eta^{n,k, 3m}(v, \tau, z) Y_{t}^{n,k}
	\biggr\} 
	\mu ( G_{r,s}^{n,m} )     \phi_{rst}(z)  \, dz  \, dv,    \\
	S_3 &= - \frac{\sqrt{1-\tau} }{2\tau^{3/2}}   \sum_{n=0}^{N-1} 
	\sum_{m=1}^{N-1} 
	\sum_{k=m+1}^{2m} \sum_{\ell = 3m}^{N- 1}
	\sum_{r,s, t =1}^d  
	\int_{\bR^d}
	\mu \biggl\{   
	\eta^{n, \ell,  \ell + 1 }(0, \tau, z)
	Y_{t}^{n,k}  \biggr\} 
	\mu ( G_{r,s}^{n,m} )     \phi_{rst}(z)  \, dz.
\end{align*}
Hence,
$$
R_3 = R_3' + \bar{Q}_3 + \tilde{Q}_3 - R_3'' - S_3.
$$

Finally, we have the following decomposition of $R_5$, derived 
in the same way as the decomposition of $R_4$:
\begin{align*}
	    R_5 &= R_5' + \bar{Q}_5 + \tilde{Q}_5 - S_5,\\
		R'_5
		&= \int_0^1  \frac{\sqrt{1-\tau} }{2\tau^{3/2}}   \sum_{n=0}^{N-1} 
		\sum_{m=1}^{N-1} 
		\sum_{r,s, t =1}^d  \mu \biggl\{ \int_{\bR^d}  
		\overline{ \eta^{n,m, 2m}(v, \tau, z)
			Y_{t}^{n,m}}   G_{r,s}^{n,0}  \biggr\}   \phi_{rst}(z)  \, dz  \, dv, \\
		\bar{Q}_5 &= \frac{\sqrt{1-\tau} }{2\tau^{3/2}}   \sum_{n=0}^{N-1} 
		\sum_{m=1}^{N-1} 
		\sum_{ \ell = 2 }^{N - 1}  
		\sum_{r,s, t =1}^d  
		\int_{\bR^d}
		\mu \biggl\{  
		\overline{ \eta^{ n, m \ell, m( \ell + 1) }(0, \tau, z) }  F_{r,s,t}^{n,0,m}
		\biggr\}    
		\phi_{rst}(z)  \, dz,
\end{align*}
\begin{align*}
		\tilde{Q}_5 &=  \frac{\sqrt{1-\tau} }{2\tau^{3/2}}   \sum_{n=0}^{N-1} 
		\sum_{m=1}^{N-1} 
		\sum_{r,s, t =1}^d  
		\int_{\bR^d}
		\mu \biggl\{   
		\eta^{n,2m, -1}(0, \tau, z)
		\biggr\} \mu( F_{r,s,t}^{n,0,m} )
		\phi_{rst}(z)  \, dz \\
		&+\frac{\sqrt{1-\tau} }{2\tau^{3/2}}   \sum_{n=0}^{N-1} 
		\sum_{m=1}^{N-1}  
		\sum_{r,s, t =1}^d  
		\int_{\bR^d}
		\mu \biggl\{   h(\sqrt{1 - 
			\tau} W  - 
		\sqrt{\tau} 
		z) \\
		&-  h( \sqrt{1 - \tau } \tilde{Z} - \sqrt{\tau} z )  \biggr\} \mu( F_{r,s,t}^{n,0,m} )
		\phi_{rst}(z)  \, dz \\
		&+ \sum_{n=0}^{N-1}
		\sum_{m=1}^{N-1}   
		\sum_{r,s, t =1}^d  
		\cN_d[ g_{rst}( \cdot, \tau )  ] \mu( F_{r,s,t}^{n,0,m} ), \\
		S_5 &= \frac{\sqrt{1-\tau} }{2\tau^{3/2}}   \sum_{n=0}^{N-1} 
		\sum_{m=1}^{N-1} 
		\sum_{ \ell = 2 }^{N-1}
		\sum_{r,s, t =1}^d  
		\int_{\bR^d}
		\mu \biggl\{   \eta^{n,m\ell, m( \ell + 1)}
		Y_{t}^{n,m} \biggr\} \mu ( G_{r,s}^{n,m} )
		\phi_{rst}(z)  \, dz.
\end{align*}

\subsection{Estimates on $\int_{\ve^2}^1 |\tilde{Q}_i(\tau)| \, d \tau$} 
Given $n, m \in \bZ$, we decompose $W^{n,m} = W_{-}^{n,m}  + W_{+}^{n,m}$ and 
$Y^{n,m} = Y_{-}^{n,m} + Y_{+}^{n,m}$, where 
\begin{align*}
	W^{n,m}_{-} = \sum_{ \substack{  0 \le i < N \\ i < n - m } } Y_i, \quad 
	W^{n,m}_{+} = \sum_{ \substack{  0 \le i < N \\ i > n + m } } Y_i, \quad 
	Y_{-}^{n,m} = \mathbf{1}_{ n-m \ge 0 }  Y^{n-m}, \quad  
	Y_{+}^{n,m} = \mathbf{1}_{ n+m < N }  Y^{n+m}.
\end{align*} 

For fixed $v \in [0,1]$, $\tau \in [0,1]$, $z \in \bR^d$, 
we introduce the following notation for convenience:
\begin{align}\label{eq:eta_tilde}
	\begin{split}
		\tilde{\eta}^{n,m,k}(x,y) &=  h(\sqrt{1 - 
			\tau} ( \tilde{W}^{n,m}(x,y)   + v  \tilde{Y}^{n,m}(x,y)   ) - 
		\sqrt{\tau}
		z)  \\ 
		&-   h(\sqrt{1 - 
			\tau} \tilde{W}^{n,k}(x,y)  - 
		\sqrt{\tau}
		z),
	\end{split}
\end{align}
where
\begin{align}\label{eq:W_tilde}
\tilde{W}^{n,m}(x,y) = W_{-}^{n,m}(x)  + W_{+}^{n,m}(y) \quad \text{and} \quad 
\tilde{Y}^{n,m}(x,y) = Y_{-}^{n,m}(x) + Y_{+}^{n,m}(y).
\end{align}
Recall the definition of $\bar{b}$ from \eqref{eq:notations}. Set
	\begin{align}\label{eq:E}
	\cE &=  d^{1/4}  (  \ve +  \Vert b^{-1} \Vert_s   ) +   \bar{b} 
	\fD,
\end{align}	
where $\fD$ is as in \eqref{eq:iso_D}. Recall also that, by \eqref{eq:varphi_cond}, $\Vert X^n \Vert_\infty \le L$ holds 
for all $n \ge 0$ with $L \ge 1$.

\begin{lem}\label{lem:global_estimates} Assume \eqref{eq:varphi_cond} and
\eqref{eq:cc}. Let $0 \le n < N$, $k \ge 0$, $m \ge - 1$,
	$\ve > 0$, $\tau \in [0,1]$, $z \in \bR^d$, 
	$v \in [0,1]$. 
	Then, 
	\begin{align}\label{eq:global_e1}
		&\int_M |  \tilde{\eta}^{n, m , m + k  } (x,x)   | \, d \mu(x) 
		\lesssim  (\delta_2 -  \delta_1)^{- 3 K_0 / 2} L^2  \ve^{-1}( m + k + 2)^2   \Vert b^{-1} \Vert_s  \cE,
	\end{align}
	and 
	\begin{align}\label{eq:global_e2}
		\begin{split}
		&\iint_{M^2} | \tilde{\eta}^{n,m , m + k } (x,y) | \, d \mu(x) \, d \mu(y) \\
		&\lesssim  (\delta_2 -  \delta_1)^{- 3 K_0 / 2} 2^{3K_0/2}C_0^{3/2} L^2 \ve^{-1} (k + m + 2)^2
		\Vert b^{-1} \Vert_s
		\cE.
		\end{split}
	\end{align}
\end{lem}

\begin{remark}
	Conditions (C1) and (C2) have been devised for the purpose of obtaining \eqref{eq:global_e2}.
\end{remark}


\begin{proof}[Proof of Lemma \ref{lem:global_estimates}] Recall that $h = h_{C, \ve}$ is the function from Lemma \ref{lem:approx}. 
	
\noindent\textbf{Proof of \eqref{eq:global_e1}.} We have $\tilde{\eta}^{n,m,m+k}(x,y) = h(w_1) - h(w_2)$, where 
	\begin{align*}
		\Vert w_1 - w_2 \Vert &= \Vert  \sqrt{1 - 
			\tau}( \tilde{W}^{n,m}(x,y) + v\tilde{Y}^{n,m}(x,y)) - 
		\sqrt{\tau}
		z - ( \sqrt{1 - 
			\tau}\tilde{W}^{n,m + k}(x,y)  - 
		\sqrt{\tau}z ) \Vert \\ 
		&\lesssim (k  + 1)\sqrt{1 - \tau}  L \Vert b^{-1} \Vert_s.
	\end{align*}
	Then, using Lemma \ref{lem:approx}, we obtain the following implications:
	\begin{align*}
		&\tilde{\eta}^{n,m , m + k  }(x,y)  \neq 0 \\
		&\implies 
		\sqrt{1 - 
			\tau}(\tilde{W}^{n,m}(x,y) + v\tilde{Y}^{n,m }(x,y)) - 
		\sqrt{\tau}z  \in  C^{  \ve + c ( k + 1)\sqrt{1 - \tau} L \Vert b^{-1} \Vert_s  } \setminus C^{- c (k + 1)\sqrt{1 - \tau} L \Vert b^{-1} \Vert_s  } \\ 
		&\implies  \tilde{W}^{n,m }(x, y) + v\tilde{Y}^{n,m }(x, y) \in D^{   \frac{ \ve} { \sqrt{1 - \tau }} + c (k + 1) L \Vert b^{-1} \Vert_s   } \setminus 
		D^{    - c ( k  + 1) L \Vert b^{-1} \Vert_s   },
	\end{align*}
	where
	$
	D = D(\tau, z) \in \cC
	$ and $c > 0$ is an absolute constant. Moreover, since $$
	\Vert W -  (W^{n,m } + v Y^{n,m} ) \Vert \lesssim ( m + 2) L \Vert b^{-1} \Vert_s,
	$$
	it follows that 
	\begin{align}\label{eq:A1}
		A_1 &:= \{ x \in M \: : \: \eta^{n,m , m + k}(x,x)  \neq 0  \} \subset \{ W \in D^{ a_1 } \setminus D^{- a_2}  \},
	\end{align}
	where
	\begin{align*}
		a_1 &:= \frac{ \ve} { \sqrt{1 - \tau }} + c' (k + m + 2) L \Vert b^{-1} \Vert_s, \: 
		a_2 :=  c' (k + m + 2) L \Vert b^{-1} \Vert_s
	\end{align*}
	for some absolute constant $c' > 0$. Recalling the definition of $\fD$ 
	from \eqref{eq:iso_D} and using 
	Lemma \ref{lem:normal_approx}, we obtain
	\begin{align}\label{eq:A1_to_gaussian}
		\begin{split}
		\mu(A_1) &\le \mu( W \in D^{a_1} \setminus D^{-a_2} )  \\
		&= ( \mu( W \in D^{a_1} ) - \cN_d ( D^{a_1}  ) ) + ( \cN_d( D^{-a_2} ) - \mu( W \in D^{- a_2} )  ) \\
		&+ \cN_d( D^{a_1} \setminus D ) + \cN_d( D  \setminus D^{-a_2} ) \\
		&\le 2 (\delta_2 -  \delta_1)^{- 3 K_0 / 2}  \bar{b}   \fD +  4 d^{1/4} ( a_1 + a_2 ) \\
		&\lesssim (\delta_2 -  \delta_1)^{- 3 K_0 / 2} L (k + m + 2)  \frac{1}{ \sqrt{1 - \tau } }
		\biggl[  \bar{b}   \fD
		+ d^{1/4} (  \ve +  \Vert b^{-1} \Vert_s ) \biggr] \\
		&\lesssim (\delta_2 -  \delta_1)^{- 3 K_0 / 2} L (k + m + 2)  \frac{1}{ \sqrt{1 - \tau } }
		\cE.
		\end{split}
	\end{align}
	Moreover,  by Lemma \ref{lem:approx}-(iv), for all $x,y \in M$,
	\begin{align}\label{eq:eta_tilde_sup}
		&| \tilde{\eta}^{n,m  ,m + k }(x,y)| \lesssim   \ve^{-1}  ( k + 1)  L \sqrt{1 - \tau} \Vert b^{-1} \Vert_s.
	\end{align}
	Estimate \eqref{eq:global_e1} follows by combining \eqref{eq:A1}, \eqref{eq:A1_to_gaussian}, and \eqref{eq:eta_tilde_sup}:
	\begin{align*}
		&\int_M |   \tilde{\eta}^{n,m , m+k }(x,x)   | \, d \mu(x)  \\ 
		&\lesssim  \ve^{-1} ( k + 1) L \sqrt{1 - \tau} \Vert b^{-1} \Vert_s \mu(A_1) \\
		&\lesssim (\delta_2 -  \delta_1)^{- 3 K_0 / 2} L^2\ve^{-1}  (k + m + 2)^2    \Vert b^{-1} \Vert_s  \biggl[   
		  \bar{b}    \fD
		+ d^{1/4}(  \ve +   \Vert b^{-1} \Vert_s  ) \biggr].
	\end{align*}
	
	\noindent\textbf{Proof of \eqref{eq:global_e2}.} We define 
	\begin{align*}
		\tilde{W}^n(x,y) &= b^{-1}(\delta_1, \delta_2)  S( \delta_1 ,  (n / N)   )(x) +  b^{-1}(\delta_1, \delta_2)   S(  (n/N)  ,\delta_2 )(y)  \\
		&= \sum_{\delta_1 N \le i < n } Y^i(x) + \sum_{n \le i < \delta_2 N } Y^i(y),
	\end{align*}
	where we recall that $S(\delta_1, \delta_2) = S_N(\delta_1, \delta_2)$ and $b^{-1}(\delta_1, \delta_2) = b_N^{-1}(\delta_1, \delta_2) $ are defined as in \eqref{eq:notations-1} and \eqref{eq:notations}, respectively.
	Then, as in the proof of \eqref{eq:global_e1}, we see that  
	\begin{align*}
		A_2 &:= \{ (x,y) \in M \times M \: : \:  \tilde{\eta}^{n,m, m + k}(x,y) \neq 0 \} \\ 
		&\subset \{  (x,y) \in M \times M \: : \: \tilde{W}^n(x,y) \in D^{a_1} \setminus D^{-a_2} \},
	\end{align*}
	where $D = D(\tau, z) \in \cC$. 
	Set $\delta = n / N$. 
	We suppose that $\delta \in [\delta_1, \delta_2]$. Otherwise, we either have 
	$\tilde{W}^n(x,y) = b^{-1}(\delta_1, \delta_2)   S( \delta_1 ,  \delta_2  )(x)$ or
	$\tilde{W}^n(x,y) = b^{-1}(\delta_1, \delta_2)   S(  \delta_1 ,\delta_2 )(y)$, and \eqref{eq:global_e2} reduces 
	to \eqref{eq:global_e1}.
	
	\noindent\emph{Case $1^\circ$:} Suppose $\delta_2 - \delta < \delta - \delta_1$. Then,
	$\delta - \delta_1 > (\delta_2 - \delta_1) / 2$ and, 
	by the second inequality in 
	\eqref{eq:cc}, the matrix
	$\Sigma (  \delta_1, \delta  )$ is invertible since $ \lambda_{\min} ( \Sigma (  \delta_1, \delta  ) ) \ge C_0^{-1} \lambda_{ \max }(   \Sigma (  \delta_1, \delta_2  )  ) > 0$.
	In this case we \say{discard} the part 
	$b^{-1}(\delta_1, \delta_2) S(  \delta  ,\delta_2 )(y)$ from $\tilde{W}^n(x,y)$ and repeat the argument used to prove \eqref{eq:global_e1}, after replacing the scaling $b^{-1}(\delta_1, \delta_2)$ in $b^{-1}(\delta_1, \delta_2)  S( \delta_1 , \delta )$ with 
	$b^{-1}(\delta_1, \delta)$. To control the error that results 
	from this replacement, we use the estimate
	\begin{align}\label{eq:replacement}
	\Vert b^{-1}( \delta_1, \delta ) b(\delta_1, \delta_2) \Vert_s \le \frac{ \lambda_{ \max }( \Sigma ( \delta_1, \delta_2 ) ) }{  \lambda_{\min} (\Sigma ( \delta_1, \delta ))   } \le C_0,
	\end{align}
	which is a direct consequence of \eqref{eq:cc}.  We now proceed to detail the argument.
	
	First, observe that 
	$$
	\tilde{W}^n(x,y) \in  D^{a_1} \setminus D^{ - a_2  } \iff    b^{-1}(\delta_1, \delta_2)  S( \delta_1 , \delta )(x)  \in  D_1^{a_1}(y) \setminus D_1^{ - a_2  }(y),
	$$
	where 
	$$
	D_1(y) = 
	\{ v - 
	b^{-1}(\delta_1, \delta_2)   S(  (n/N)  ,\delta_2 )(y) 
	\: : \: v \in D  \} \in \cC.
	$$
	Since
	$$W( \delta_1, \delta ) 
	= b^{-1}( \delta_1, \delta ) S(\delta_1, \delta),
	$$
	multiplying $b^{-1}(\delta_1, \delta_2)  S( \delta_1 , \delta )$ by 
	$b^{-1}( \delta_1, \delta ) b(\delta_1, \delta_2)$ from the left, we obtain
	\begin{align}\label{eq:qqs}
		(x,y) \in A_2 \implies W( \delta_1, \delta )(x) \in D_2^{ a_1'}(y)
		\setminus D_2^{- a_2'}(y),
	\end{align}
	where $D_2(y) \in \cC$ and, by \eqref{eq:replacement},
	\begin{align*}
		a_1' &= \frac{ \lambda_{ \max }( \Sigma( \delta_1, \delta_2 ) ) }{  \lambda_{\min} (\Sigma( \delta_1, \delta ))   } a_1 
		\le C_0 a_1, \quad 
		a_2' = \frac{ \lambda_{ \max }( \Sigma ( \delta_1, \delta_2 ) ) }{  \lambda_{\min} (\Sigma ( \delta_1, \delta ))   } a_2 
		\le C_0 a_2.
	\end{align*}
	An application of Lemma \ref{lem:normal_approx} as in \eqref{eq:A1_to_gaussian} yields the following estimate 
	for any $y \in M$:
	\begin{align*}
		&\mu \biggl(  W_N( \delta_1, \delta ) \in 
		D_2^{ a_1' }(y) \setminus D_2^{ a_2' }(y)  \biggr) \\
		&\lesssim d^{1/4} C_0 (a_1 + a_2) + d_c(   \cL( W( \delta_1  , \delta ) ), \cN_d   ) \\
		&\lesssim  d^{1/4} C_0 (a_1 + a_2) + (  \delta - \delta_1 )^{ - 3K_0 / 2}  
		\max\{  N \Vert b^{-1}( \delta_1, \delta )   \Vert_s^{3},  \Vert b^{-1}( \delta_1, \delta )   \Vert_s \}  
		\fD.
	\end{align*}
	Since $\delta - \delta_1 \ge ( \delta_2 - \delta_1) / 2$ and
	\begin{align*}
		\Vert b^{-1} ( \delta_1, \delta) \Vert_s^{-2} &= 
		\lambda_{ \min }( \Sigma(\delta_1, \delta) ) \ge C_0^{-1}  \lambda_{\max} ( \Sigma(\delta_1, \delta_2)) 
		\ge C_0^{-1}  \lambda_{\min} ( \Sigma(\delta_1, \delta_2)) \\
		&= C_0^{-1} \Vert b^{-1}(\delta_1, \delta_2) \Vert_s^{-2},
	\end{align*}
	it follows that 
	\begin{align}
		\begin{split}\label{eq:qq2}
		&\mu \biggl(  W( \delta_1, \delta ) \in 
		D_2^{ a_1' }(y) \setminus D_2^{ a_2' }(y)  \biggr) \\
		&\lesssim d^{1/4} C_0 (a_1 + a_2) +  C_0^{3/2} 2^{3K_0 / 2} ( \delta_2 - \delta_1 )^{ - 3K_0 / 2}  
		\bar{b}
		\fD \\
		&\lesssim (\delta_2 -  \delta_1)^{- 3 K_0 / 2} 2^{3K_0/2}C_0^{3/2} L (k + m + 2)  \frac{1}{ \sqrt{1 - \tau } }
		\cE.
		\end{split}
	\end{align}

	Combining \eqref{eq:eta_tilde_sup}, \eqref{eq:qqs}, and \eqref{eq:qq2}, we 
	arrive at the desired estimate \eqref{eq:global_e2}:
	\begin{align*}
		&\iint_{M^2} | \tilde{\eta}^{n,m  , m + k  } (x,y) | \, d \mu(x) \, d \mu(y) \\ 
		&\lesssim \ve^{-1} ( k + 1) L \sqrt{1 - \tau} \Vert b^{-1} \Vert_s ( \mu \otimes \mu )(A_2) \\
		&\lesssim \ve^{-1}( k + 1)  L \sqrt{1 - \tau} \Vert b^{-1} \Vert_s  \int_M \mu \biggl(   W_N( \delta_1, \delta ) \in 
		D_2^{ a_1' }(y) \setminus D_2^{ a_2' }(y)
		\biggr) \, d \mu(y) \\ 
		&\lesssim  (\delta_2 -  \delta_1)^{- 3 K_0 / 2} 2^{3K_0/2}C_0^{3/2} L^2 \ve^{-1} (k + m + 2)^2
		\Vert b^{-1} \Vert_s
		\cE.
	\end{align*}
	
	\noindent\emph{Case $2^{\circ}$:}  $| \delta_2 - \delta |  \ge | \delta - \delta_1 |$. We obtain 
	\eqref{eq:global_e2} as in Case $1^{\circ}$, but applying
	the first inequality in \eqref{eq:cc} instead of the second one.
\end{proof}

Observe that, whenever $n \ge 0$ and $0 \le m \le k$, \eqref{eq:correlations} implies
\begin{align}\label{eq:multiple}
| \mu ( G_{r,s}^{n,m} ) | \le \bfC L^2 \Vert b^{-1} \Vert_s q^m \quad \text{and} \quad 
| \mu( F_{r,s,t}^{n,m,k} ) | \le \bfC \Vert  b^{-1} \Vert_s^3 L^3 q^{ \max\{  m, k - m \} }.
\end{align}

Next, using Lemma \ref{lem:global_estimates} together with \eqref{eq:diff_gN} and properties of 
$h_{C, \ve}$ in Lemma \ref{lem:approx}, we estimate $\int_{\ve^2}^1 |\tilde{Q}_i(\tau)| \, d \tau$ for $\tilde{Q}_i$ in the decompositions of Section \ref{sec:Ri_decomp}.

\begin{prop}\label{prop:qi_tilde} For $1 \le i \le 5$,
\begin{align}\label{eq:Qi_tilde_estim}
	\int_{\ve^2}^1 | \tilde{Q}_i(\tau) | \, d \tau \le  \bfC 
	(\delta_2 -  \delta_1)^{1 - 3 K_0 / 2} 
	d^3 
	L^5 N
	\Vert b^{-1} \Vert_s^3
	\biggl[  \Vert b^{-1} \Vert_s 
	\ve^{-2}  \cE + 
	\ve^{-1} \cE  + 1 \biggr].
\end{align}
\end{prop}

\begin{proof} Consider $\tilde{Q}_4 = I + II + III$, where $I,II,III$ are defined as in \eqref{eq:Q4_tilde}. Since $\eta^{n,2k, -1}(0, \tau, z) = -\eta^{n,-1, 2k}(0, \tau, z)$, 
by \eqref{eq:global_e1},
\begin{align*}
		\biggl|  \mu \biggl\{   
	\eta^{n,2k, -1}(0, \tau, z)
	\biggr\} \biggr|  \lesssim  (\delta_2 -  \delta_1)^{- 3 K_0 / 2} L^2  \ve^{-1} k^2   \Vert b^{-1} \Vert_s  \cE
\end{align*} 
holds for $k \ge 1$. Hence, applying the second estimate in
\eqref{eq:multiple}, we obtain
\begin{align}\label{eq:I_estim}
	\begin{split}
	|I| &\le \frac{\sqrt{1-\tau} }{2\tau^{3/2}}   \sum_{\delta_1 N \le n < \delta_2 N} 
	\sum_{m=1}^{N-1} 
	\sum_{k=2m+1}^{N-1}  
	\sum_{r,s, t =1}^d  
	\int_{\bR^d}
	\biggl|  \mu \biggl\{   
	\eta^{n,2k, -1}(0, \tau, z)
	\biggr\} \biggr| | \mu( F_{r,s,t}^{n,m,k} ) |
	| \phi_{rst}(z) |   \, dz \\
	&\le  \bfC (\delta_2 -  \delta_1)^{1 - 3 K_0 / 2}  L^5 \Vert b^{-1} \Vert_s^4  \cE    \frac{\sqrt{1-\tau} }{2\tau^{3/2}} \ve^{-1} 
	N
	\sum_{m=1}^{N-1} 
	\sum_{k=2m+1}^{N-1}  k^2  
	q^{k - m}
	\sum_{r,s, t =1}^d  
	\int_{\bR^d}
	| \phi_{rst}(z) |  \, dz \\
	&\le  \bfC d^3 (\delta_2 -  \delta_1)^{1 - 3 K_0 / 2}  L^5 \Vert b^{-1} \Vert_s^4  \ve^{-1} 
	N \cE 
	 \frac{\sqrt{1-\tau} }{2\tau^{3/2}}.
	\end{split}
\end{align}
Note that, by properties of $h = h_{C, \ve}$ in Lemma \ref{lem:approx}, 
\begin{align*}
&| \mu \{   h(\sqrt{1 - 
		\tau} W  - 
	\sqrt{\tau} 
	z)  -  h( \sqrt{1 - \tau } \tilde{Z} - \sqrt{\tau} z )  \} |\\
&\le | 
\mu(  \sqrt{1 - \tau } W - \sqrt{\tau} z \in C^{\ve}   ) -  \mu ( \sqrt{1 - \tau } \tilde{Z} - \sqrt{\tau} z  \in C  ) | \\
&+ |
 \mu ( \sqrt{1 - \tau } \tilde{Z} - \sqrt{\tau} z  \in C^\ve  ) - \mu(  \sqrt{1 - \tau } W - \sqrt{\tau} z \in C  )   |.
\end{align*}
Thus, applying Lemma \ref{lem:normal_approx}, we obtain
\begin{align*}
	&| \mu \{   h(\sqrt{1 - 
		\tau} W  - 
	\sqrt{\tau} 
	z)  -  h( \sqrt{1 - \tau } \tilde{Z} - \sqrt{\tau} z )  \} | \\
	&\lesssim d^{1/4} \frac{\ve}{ \sqrt{1 - \tau } } + \fD \bar{b} ( \delta_2 - \delta_1 )^{-3K_0 / 2} 
	\lesssim ( \delta_2 - \delta_1 )^{-3K_0 / 2} \frac{1}{ \sqrt{1 - \tau } } \cE.
\end{align*}
It follows that 
\begin{align}\label{eq:II_estim}
	\begin{split}
	|II| &\le \frac{\sqrt{1-\tau} }{2\tau^{3/2}} 
	 \sum_{\delta_1 N \le n < \delta_2 N} 
	\sum_{m=1}^{N-1} 
	\sum_{k=2m+1}^{N-1}  
	\sum_{r,s, t =1}^d  
	\int_{\bR^d}
	\biggl| \mu \biggl\{   h(\sqrt{1 - 
		\tau} W  - 
	\sqrt{\tau} 
	z) \\
	&-  h( \sqrt{1 - \tau } \tilde{Z} - \sqrt{\tau} z )  \biggr\} \biggr| | \mu( F_{r,s,t}^{n,m,k} ) |
	| \phi_{rst}(z) |   \, dz \\
	&\le \bfC d^3 ( \delta_2 - \delta_1 )^{1 -3K_0 / 2}   
	L^3  \Vert  b^{-1} \Vert_s^3 N   \cE \frac{ 1 }{2\tau^{3/2}}.
	\end{split}
\end{align}
Finally, by \eqref{eq:diff_gN},
\begin{align*}
	 | \cN_d[ g_{rst}( \cdot, \tau )  ] | \le \frac{ \sqrt{1 - \tau } }{ 2 } \int_{\bR^d} | h(x) || \phi_{rst}(x) | \, dx \le  \frac{ \sqrt{1 - \tau } }{ 2 } \int_{\bR^d} | \phi_{rst}(x) | \, dx 
	 \lesssim 1.
\end{align*}
Hence,
\begin{align}\label{eq:III_estim}
	\begin{split}
	|III| &\le  \sum_{\delta_1 N \le n < \delta_2 N} 
	\sum_{m=1}^{N-1} 
	\sum_{k=2m+1}^{N-1}  
	\sum_{r,s, t =1}^d  
	 | \cN_d[ g_{rst}( \cdot, \tau )  ] | \mu( F_{r,s,t}^{n,m,k} ) | \\
	 &\le \bfC d^3 ( \delta_1 - \delta_1 ) L^3   N \Vert b^{-1} \Vert_s^3.
	 \end{split}
\end{align}

Gathering \eqref{eq:I_estim}, \eqref{eq:II_estim}, and \eqref{eq:III_estim}, and integrating over $\tau \in [\ve^2, 1]$, 
we obtain 
\eqref{eq:Qi_tilde_estim} for $i = 4$. For the other terms 
$\tilde{Q}_i$ with $i \neq 4$, 
 \eqref{eq:Qi_tilde_estim} can be derived 
with a computation almost identical to that for $\tilde{Q}_4$, 
and we therefore omit the details.
\end{proof}

\subsection{Decorrelation bounds} In this section, we establish decorrelation bounds involving $\eta^{n,m,k}$, which will be used to control $\bar{Q}_i$, $R_i'$, $R_i''$, $S_i$.

\begin{lem}\label{lem:decor} Assume \eqref{eq:cc}. Suppose that $0 \le \tau \le 1$, $\ve > 0$, $z \in \bR^d$, $v \in [0,1]$, and $r,s,t \in \{1, \ldots, d\}$.  Set 
	\begin{align*}
	A_*(m, k, \ell) &= ( \delta_2 - \delta_1 )^{ - 3K_0 / 2} 2^{3K_0 / 2}
	C_0^{3/2} 
	\ve^{-1} \cE ( m + k + \ell + 1)^2, \quad 
	q_* = \max \{ q, \Lambda^{- \alpha /p} \},
	\end{align*}
where $\cE$ is defined as in \eqref{eq:E},  and $q \in (0,1)$ is 
as in Theorem \ref{thm:exp_loss}.
Then, the following upper bounds 
hold for all $0 \le n < N$, $k,\ell \ge 0$ and $0 \le \hat{m} \le m$:
\begin{align}
	&\biggl| \mu \biggl\{ 
	\overline{\eta^{n, m + k, m + k + \ell}(v, \tau, z)}  F_{r,s,t}^{n,\hat{m},m} \biggr\}  \biggr|  
	\le  \bfC   L^5  \Vert b^{-1} \Vert_s^4  A_*(m, k, \ell) q_*^{ k / 2 }  , \label{eq:decorr_1} \\ 
	&\biggl| \mu \biggl\{ 
	\eta^{n,m + k, m + k + \ell}(v, \tau, z)  F_{r,s,t}^{n, \hat{m} ,m} \biggr\}  \biggr|  
	\le  \bfC   L^5  \Vert b^{-1} \Vert_s^4 A_*(m, k, \ell) q_*^{ \hat{m} / 2 }  , \label{eq:decorr_2} \\ 
	&\biggl| \mu \biggl\{ 
	\eta^{n,m + k, m + k + \ell}(v, \tau, z)  Y_t^{n,m} \biggr\}  \biggr|  \le  \bfC  
	L^3  \Vert b^{-1} \Vert_s^2
	A_*(m, k, \ell) q_*^{ k / 2 }  ,  \label{eq:decorr_3} \\
	&\biggl| \mu \biggl\{ 
	\overline{   \eta^{n, m + k,m + k + \ell }(v, \tau, z)  Y_t^{n,m + k } }   G_{r,s}^{n,m} \biggr\} \biggr| \le  \bfC L^5  \Vert b^{-1} \Vert_s^4 A_*(m, k, \ell )  q_*^{ k / 2 }.  \label{eq:decorr_4}
\end{align}
	
\end{lem}


\subsubsection{Proof of \eqref{eq:decorr_1}}\label{sec:estimate_r1} 
Let $0 \le n < N$,  $k,\ell \ge 0$, $0 \le \hat{m} \le m$, 
$0 \le \tau \le 1$, $\ve > 0$, $z \in \bR^d$, 
$v \in [0,1]$, and $r,s,t \in \{1, \ldots, d\}$. Recall the definitions of 
$\tilde{\eta}^{n,m,k}(x,y)$, $\tilde{W}^{n,m}(x,y)$,  and $\tilde{Y}^{n,m}(x,y)$ from \eqref{eq:eta_tilde} and 
\eqref{eq:W_tilde}.
We aim to control 
\begin{align*}
	\cI &:=\mu \biggl\{ 
	\overline{\eta^{n,m + k, m + k + \ell}(v, \tau, z)}  F_{r,s,t}^{n, \hat{m},m} \biggr\} \\ 
	&= \int_M   \tilde{\eta}^{n,m + k , m + k + \ell  }(x,x)    F_{r,s,t}^{n,\hat{m},m}(x) \, d \mu(x) 
	- 
	 \int_M  \tilde{\eta}^{n,m + k , m + k + \ell  } (y,y)  \, d \mu(y)   \int_M  F_{r,s,t}^{n,\hat{m},m} (x) \, d \mu(x).
\end{align*}
To exploit the gap between the indices appearing in $\eta^{n,m + k , m + k + \ell  }$ and $F_{r,s,t}^{n, \hat m,m}$, 
we decompose 
\begin{align}\label{eq:decomp_I}
\cI = \cI_1 + \cI_2 + \cI_3,
\end{align}
where 
\begin{align*}
	\cI_1 &=  \int_M  \tilde{\eta}^{n,m + k , m + k + \ell  }(x,x)  F_{r,s,t}^{n,\hat{m},m}(x)\, d \mu(x) \\
	&- 
	\int_{M} \int_M \tilde{\eta}^{n,m + k , m + k + \ell  }(x,y)   \, d \mu(y) \, 
	F_{r,s,t}^{n,\hat{m},m}(x) \, d \mu(x), \\
	\cI_2 &= \int_{M} \int_M   \tilde{\eta}^{n,m + k , m + k + \ell  }(x,y)  \, d \mu(y)  \, F_{r,s,t}^{n, \hat{m},m}(x) \, d \mu(x) \\
	 &-  
		\iint_{M^2}  \tilde{\eta}^{n,m + k , m + k + \ell  }(x,y)  \, d \mu(x) \, d \mu(y) 
		\int_M  F_{r,s,t}^{n,\hat{m},m}(x') \, d \mu(x'), \\ 
	\cI_3 &=  \iint_{M^2}   \tilde{\eta}^{n,m + k , m + k + \ell  } (x,y)  \, d \mu(x) \, d \mu(y) 
	\int_M  F_{r,s,t}^{n,\hat{m},m}(x')  \, d \mu(x')  \\
	&-
	 \int_M  \tilde{\eta}^{n,m + k , m + k + \ell  } (y,y)  \, d \mu(y)   \int_M  F_{r,s,t}^{n,\hat{m},m}(x')  \, d \mu(x').
\end{align*}
Each of these three terms can be controlled through a similar procedure consisting of two steps, which we carry out in detail for $\cI_1$ in 
what follows.

\noindent\textbf{$\cI_1$ -- Step 0.} We decompose the integral $\int_M$ using the partition $\cA( \cT_{i}  )$ induced by a suitable iterate 
$\cT_i$. After that we replace  $F_{r,s,t}^{n,\hat{m},m}$ in  $\cI_1$ by a constant on each $a \in \cA( \cT_i )$ and estimate the error using (UE:1). Without loss of generality, we will assume that $\mu(a) > 0$ for all $a \in \cA( \cT_i )$. Otherwise we can replace $\cA( \cT_i )$ with $\cA_*( \cT_i ) = \{ a \in \cA( \cT_i ) \: : \: \mu(a) > 0\}$.

In the case of $\cI_1$, we set
$
i = i(n,m, k) =    \lceil n + m + k/2 \rceil ,
$
and decompose 
\begin{align*}
&\cI_1 
=   \sum_{a \in  \cA (  \cT_i   ) } \mu(a)  \int_a \biggl[  \tilde{\eta}^{n,m + k , m + k + \ell  } (x,x)   - \int_M  \tilde{\eta}^{n,m + k , m + k + \ell  }(x,y)  \, d \mu(y) \biggr]
F_{r,s,t}^{n, \hat{m} ,m}(x)  \, d \mu_a(x),
\end{align*}
where $\mu_a$ denotes the probability measure with density $\rho_a = \mathbf{1}_a \rho / \mu(a)$.  

By (UE:1), for any $x, y \in a \in \cA( \cT_i )$, any $0 \le j \le n + m$, and any $r \in \{1, \ldots, d\}$, we have 
\begin{align}\label{eq:error_phi}
|X_r^j(x) - X_r^j(y) | \le L \Lambda \Lambda_1^{- k/2},
\end{align}
where $\Lambda_1 = \Lambda^{ \alpha /p} > 1$.
Fix $c_a \in a$ for each $a \in \cA( \cT_i)$.
Combining \eqref{eq:global_e1}, \eqref{eq:global_e2} and \eqref{eq:error_phi}, we obtain 
\begin{align*}
	\cI_1 &=  \sum_{a \in \cA( \cT_i) } \mu(a)  \int_a \biggl[  \tilde{\eta}^{n, m + k, m + k + \ell }(x,x)   - \int_M \tilde{\eta}^{n, m + k, m + k + \ell }(x,y)  \, d \mu(y) \biggr]
	F_{r,s,t}^{n, \hat{m} ,m}(x)  \, d \mu_a(x)  \notag \\ 
	&=  \sum_{a \in \cA( \cT_i) }  F_{r,s,t}^{n,\hat{m},m}(c_a)  \mu(a)  \int_a \biggl[ \tilde{\eta}^{n, m + k, m + k + \ell }(x,x)   - \int_M \tilde{\eta}^{n, m + k, m + k + \ell }(x,y)  \, d \mu(y) \biggr]
	 \, d \mu_a(x)  \notag \\ 
	&+ O \biggl( L^5  \Vert b^{-1} \Vert_s^4  ( \delta_2 - \delta_1 )^{ - 3K_0 / 2}
	  2^{3K_0/2}C_0^{3/2}  \ve^{-1}
	  \cE   (m + k +  \ell + 1)^2   \Lambda   \Lambda_1^{- k/2} 
	 \biggr),
\end{align*}
where $\cE = d^{1/4}  (  \ve +  \Vert b^{-1} \Vert_s   ) + \bar{b}
\fD$ and the constant in the error term is absolute.

\noindent\textbf{$\cI_1$ -- Step 1.} By essentially repeating the argument from Lemma \ref{lem:global_estimates}, we approximate 
$$
 \tilde{\eta}^{n, m + k, m + k + \ell } (x,x) \approx \int_{a}  \tilde{\eta}^{n, m + k, m + k + \ell }  (x',x) \, d\mu_a(x')
$$
for $x \in a \in \cA(\cT_i)$. More precisely, we have the following estimates.

\begin{claim}\label{claim:approx_integral_1} Set 
$$
B_1(x', x) = h(\sqrt{1 - 
	\tau}(  \tilde{W}^{n,m + k}(x',x)   + v( \tilde{Y}^{n,m + k }(x', x)  ) - 
\sqrt{\tau}
z)
$$
and 
$$
B_2(x', x) = h(  \sqrt{1 - \tau} \tilde{W}^{n,m + k + \ell }(x', x)  - \sqrt{\tau} z  ),
$$
so that 
$
\tilde{\eta}^{n, m + k, m + k + \ell}(x',x) = B_1(x', x) - B_2(x', x).
$
Then, 
\begin{align}\label{eq:approx_integral_1-1}
	\begin{split}
	&\sum_{a \in \cA( \cT_i )} 
	\mu(a) \int_{a}  \biggl| 
	B_1(x, x)   -  \int_{a} B_1(x',x) \, d \mu_a(x')  
	\biggr|    \, d \mu_a(x) \\ 
	&\le   \bfC  L^2  \Vert b^{-1} \Vert_s (\delta_2 -  \delta_1)^{- 3 K_0 / 2}  \ve^{-1} \cE
	(m + k + 1) 
	\Lambda_1^{-k - m},
	\end{split}
 \end{align}
and
\begin{align}\label{eq:approx_integral_1-2}
	\begin{split}
	&\sum_{a \in \cA( \cT_i ) } 
	\mu(a) \int_{a}  \biggl| 
	B_2(x, x)   -  \int_{a} B_2(x',x) \, d \mu_a(x')  
	\biggr|    \, d \mu_a(x)  \\ 
	&\le \bfC L^2 \Vert b^{-1} \Vert_s   (\delta_2 -  \delta_1)^{- 3 K_0 / 2}    \ve^{-1}  \cE     (m + k +  \ell + 1) \Lambda_1^{-k - m}.
	\end{split}
\end{align}
\end{claim}

\begin{proof}[Proof of Claim \ref{claim:approx_integral_1}] Note that, if $x,x' \in a \in \cA( \cT_i )$, $y \in M$, it follows by 
(UE:1) that 
$$
B_1(x, x) - B_1(x', x) = h( w_1 ) - h(w_2)
$$
where
\begin{align}\label{eq:replace_const}
\begin{split}
&\Vert w_1 - w_2 \Vert \\
&= 
\Vert
\sqrt{1 - 
	\tau}(  \tilde{W}^{n,m + k}(x,x)   + v( \tilde{Y}^{n,m + k }(x, x)  ) - 
\sqrt{\tau}
z
\\  
&- (
\sqrt{1 - 
	\tau}(  \tilde{W}^{n,m + k}(x',x)   + v( \tilde{Y}^{n,m + k }(x', x)  ) - 
\sqrt{\tau}
z ) \Vert 
  \\ 
&\le  \Vert  Y_{-}^{n, m+k}(x)  -  Y_{-}^{n, m+k}(x') \Vert + 
 \Vert W^{n,m + k}_{-}(x) - W^{n,m + k}_{-}(x')   \Vert 
\\ 
&\le   \Vert b^{-1} \Vert_s  L  \Lambda \Lambda_1^{ - k - m}   
+  \Vert b^{-1} \Vert_s  L \mathbf{1}_{n - m - k> 0} \sum_{j=0}^{ \round{ ( n - m - k)/p}  } 
\sum_{  jp \le  q  < (j + 1)p }  d( \cT_q x, \cT_q x')^{\alpha }   \\ 
&\le   \Vert b^{-1} \Vert_s L  \Lambda \Lambda_1^{ - k - m}    +   \mathbf{1}_{n - m - k > 0}  \Vert b^{-1} \Vert_s L   \sum_{j=0}^{ \round{ ( n - m - k)/p}  }  p \Lambda^{  
  -  \alpha \round{(n + m + k/2)/ p } + \alpha  j  }  
 \\
 &\lesssim  
  \Vert b^{-1} \Vert_s  \frac{ L  \Lambda p }{ 1 - \Lambda^{-1}_1 } \Lambda_1^{ - k - m}.
 \end{split}
\end{align}
Using \eqref{eq:replace_const} and arguing as in the proof of Lemma \ref{lem:global_estimates}, we see that 
\begin{align}\label{eq:rel_a}
	&A := \biggl\{  x \in a \: : \:   B_1(x,x)  
	- \int_a  B_1 (x', x) \, d \mu_a(x') \neq 0  \biggr\} \subset 
	 a \cap \{  W \in  D^{ c_1 } \setminus D^{ - c_2  } \},
\end{align}
where $D = D(\tau, z) \in \cC$, and, for some absolute constant $c > 0$,
\begin{align*}
	c_1 &= \frac{\ve}{ \sqrt{1 - \tau} } + c \Vert b^{-1} \Vert_s   \frac{L \Lambda  p}{  1 - \Lambda^{-1}_1 }  + c ( k + m + 1	) L \Vert b^{-1} \Vert_s, \\
	c_2 &= c \Vert b^{-1} \Vert_s 
	\frac{  L \Lambda  p }{    1 - \Lambda^{-1}_1 }  + c ( k + m + 1)  L \Vert b^{-1} \Vert_s.
\end{align*}
In combination with Lemma \ref{lem:normal_approx}, \eqref{eq:replace_const}, and Lemma \ref{lem:approx}-(iv), \eqref{eq:rel_a} yields 
\begin{align*}
	&\sum_{a} \mu(a) \int_a \biggl| 
	 B_1(x,x) - \int_a  B_1(x', x) d \mu_a(x')
	\biggr| \, d \mu_a(x) \\ 
	&\le \bfC L  \ve^{-1}  \Vert b^{-1} \Vert_s  \Lambda_1^{-k - m}  \sqrt{1- \tau }  \mu( W \in  D^{ c_1 } \setminus D^{ - c_2  }  )  \\ 
	&\le \bfC L \ve^{-1}  \Vert b^{-1} \Vert_s  \Lambda_1^{-k - m} \biggl\{ 
	d_c (  \cL(W), \cN_d  ) +  \sqrt{1- \tau }  d^{1/4}(c_1 + c_2)
	\biggr\} \\ 
	&\le \bfC (\delta_2 -  \delta_1)^{- 3 K_0 / 2} L^2\ve^{-1}  \Vert b^{-1} \Vert_s   (k + m + 1) \Lambda_1^{-k - m} \biggl\{ 
       \bar{b} \fD +   d^{1/4} ( \ve +   \Vert b^{-1}  \Vert_s  ) 	\biggr\}.
\end{align*}
This establishes \eqref{eq:approx_integral_1-1}. We can obtain  \eqref{eq:approx_integral_1-2} in a similar way, replacing
$\tilde{W}^{n,m + k}   + v \tilde{Y}^{n,m + k }$ with $\tilde{W}^{n,m + k + \ell}$ in the preceding proof.
\end{proof}

By Claim \ref{claim:approx_integral_1} and the estimate established in Step 0,
\begin{align*}
	\cI_1 = \cI_1' + O \biggl( \bfC L^5  \Vert b^{-1} \Vert_s^4  ( \delta_2 - \delta_1 )^{ - 3K_0 / 2}
	2^{3K_0/2}C_0^{3/2}  \ve^{-1}
	\cE   ( m + k +  \ell + 1 )^2    \Lambda_1^{- k/2} 
	\biggr),
\end{align*}
where
\begin{align*}
	\cI_1' &= \sum_{a  }  F_{r,s,t}^{n, \hat{m},m}(c_a)  \mu(a)  \int_a \biggl[   \int_a  \tilde{\eta}^{n, m + k, m + k + \ell } (x',x) \, d \mu_a(x')  \\
	&- \int_M \tilde{\eta}^{n, m + k, m + k + \ell } (x,y)  \, d \mu(y) \biggr]
	\, d \mu_a(x).
\end{align*}

\noindent\textbf{$\cI_1$ -- Step 2.} We replace each conditional 
measure $\mu_a$, $a \in \cA( \cT_i )$, with the measure $\mu$ in $\cI_1'$ 
and estimate the resulting error using Corollary \ref{cor:ml_cond}.

Let $a \in \cA(\cT_i)$, and denote by $\hat{\eta}^{n,m + k, m + k + \ell}(x,y)$ the function that 
satisfies $$
\hat{\eta}^{n,m + k, m + k + \ell}(x, \cT_{n + m  + k  } y) = \tilde{\eta}^{n,m + k , m + k + \ell}(x,y).
$$
Recall that $\rho_a = \mathbf{1}_a \rho / \mu(a)$, where $\rho$ 
is the density of $\mu$. If $n + m + k \ge N$, then $\cI_1' = 0$. Otherwise, by Corollary \ref{cor:ml_cond},
\begin{align}\label{eq:dec_replace_cond}
	\begin{split} 
	&\biggl| \int_M \biggl[  \int_a  \tilde{\eta}^{n, m + k, m + k + \ell }(x',x)  \, d \mu_a(x')  \\
	&- \int_M \int_a \tilde{\eta}^{n, m + k, m + k + \ell }(x',y) \, d \mu_a(x')  \, d \mu(y) \biggr]  \,  (\rho_a - \rho ) d \lambda(x) \biggr|  \\ 
	&= \biggl| \int_M \biggl[  \int_a \hat{\eta}^{n,m + k, m + k + \ell}(x',x) \, d \mu_a(x') \\
	 &- \int_M \int_a  
	\tilde{\eta}^{n, m + k, m + k + \ell }(x',y)
	\, d \mu_a(x')  \, d \mu(y) \biggr] \circ \cT_{n + m + k}  \,  (\rho_a - \rho ) d \lambda(x) \biggr|  \\ 
	&\le \biggl[   \int_M \int_a  |   \hat{\eta}^{n,m + k, m + k + \ell}(x',x)  | \, d \mu_a(x')  \, d  \lambda (x)  \\
	&+ 
	 \int_M \int_a  |  \tilde{\eta}^{n,m + k, m + k + \ell}(x',y)  | \, d \mu_a(x')  \, d \mu(y) 
	  \biggr]  \Vert  \cP_{n +  m + k - i + i } (  \rho_a - \rho  )   \Vert_\infty   \\ 
	&\le \bfC  \frac{1}{  \inf_M   \cP_{ n + m + k }( \rho )  } \int_M \int_a | \eta^{n,m + k, m + k + \ell }(x,y)| \, d \mu_a(x) \, d \mu(y)  
	q^{ k/2 }   \\ 
	&\le  \bfC  \int_M \int_a | \eta^{n,m + k, m + k + \ell }(x,y)| \, d \mu_a(x) \, d \mu(y)  q^{ k/2 },
	\end{split}
\end{align}
where \eqref{eq:return_to_cone_2} was used in the last inequality.
Consequently, using \eqref{eq:global_e2}, we obtain 
\begin{align*}
	| \cI_1' | &\le  \bfC  L^3 \Vert b^{-1} \Vert_s^3 \sum_{a} \mu(a)  
	  \int_M \int_a | \eta^{n,m + k , m  + k + \ell  }(x,y)|  \, d \mu_a(x) \, d \mu(y) q^{ k / 2 } \\ 
	 &\le  \bfC L^5  \Vert b^{-1} \Vert_s^4  ( \delta_2 - \delta_1 )^{ - 3K_0 / 2}
	 2^{3K_0/2}C_0^{3/2}  \ve^{-1}
	 \cE   ( m + k +  \ell + 1 )^2   q^{k/2}.
\end{align*}
We have established the estimate 
\begin{align}\label{eq:bound_on_I1}
	\begin{split}
	|\cI_j| &\le \bfC L^5  \Vert b^{-1} \Vert_s^4  ( \delta_2 - \delta_1 )^{ - 3K_0 / 2}
	2^{3K_0/2}C_0^{3/2}  \ve^{-1}
	\cE   ( m + k +  \ell + 1)^2   q_*^{k/2}  \\
	&= \bfC L^5 \Vert b^{-1} \Vert_s ^4 
	A_*(m, k, \ell ) q^{ k / 2 }_*
	\end{split}
\end{align}
for $j = 1$, where $q_* = \max \{ q, \Lambda^{- \alpha /p} \}$.

\noindent\textbf{Estimates on $\cI_2$ and $\cI_3$.} 
Since the remaining terms $\cI_2$ and $\cI_3$ can be treated in a manner similar to $\cI_1$, 
we provide only an outline of the approach to deriving \eqref{eq:bound_on_I1} for $j \in \{2,3\}$.

In the case of $\cI_2$, we have $\cI_2 = 0$ if $n - m - k< 0$. Otherwise, we set
$
i =    \lceil  n - m - k/2  \rceil,
$
and decompose 
\begin{align*}
	&\int_{M} \int_M  \tilde{\eta}^{n, m + k, m + k + \ell } (x,y)  \, d \mu(y)  \, F_{r,s,t}^{n,\hat{m},m}(x) \, d \mu(x) \\ 
	&= \int_M  \sum_{a \in \cA( \cT_i )} \mu(a) \int_a F_{r,s,t}^{n,\hat{m},m}(x)  \int_M \tilde{\eta}^{n, m + k, m + k + \ell } (x,y)  \, d \mu(y)  \, d \mu_a(x) .
\end{align*}
As in the case of $\cI_1$, we approximate $$\int_{M} \tilde{\eta}^{n, m + k, m + k + \ell }(x,y)  \, d \mu(y)  \approx \int_{M} \int_{a} \tilde{\eta}^{n, m + k, m + k + \ell }(x',y) \, d \mu_a(x')  \, d \mu(y)$$ 
on each $a \in \cA( \cT_i )$. For this, we use the following counterpart of Claim \ref{claim:approx_integral_1}.

\begin{claim}\label{claim:approx_integral_2} Define $B_1,B_2$ as in Claim \ref{claim:approx_integral_1} so that
$$
\tilde{\eta}^{n, m + k, m + k + \ell}(x',x) = B_1(x', x) - B_2(x', x).
$$
Then,  
	\begin{align*}
		&\sum_{a \in \cA( \cT_i ) } 
		\mu(a) \int_{a}  \biggl| 
		 \int_M B_1(x, y) \, d \mu(y)   -   \int_M \int_{a} B_1(x',y) \, d \mu_a(x')  \, d \mu(y)
		\biggr|    \, d \mu_a(x) \notag \\ 
		&\le  \bfC L^2  \Vert b^{-1} \Vert_s (\delta_2 -  \delta_1)^{- 3 K_0 / 2}   2^{3K_0 / 2}
		C_0^{3/2}   \ve^{-1}   \cE (k + m + 1) \Lambda_1^{-k / 2},
	\end{align*}
	and
	\begin{align*}
		&\sum_{a \in  \cA( \cT_i )  } 
		\mu(a) \int_{a}  \biggl| 
		 \int_M B_2(x, y) \, d \mu(y)   -  \int_{M} \int_a  B_2(x',y) \, d \mu_a(x')  \, d \mu(y)
		\biggr|    \, d \mu_a(x)  \notag \\ 
		&\le  \bfC L^2 \Vert b^{-1} \Vert_s 
		(\delta_2 -  \delta_1)^{- 3 K_0 / 2} 2^{3K_0 / 2}
		C_0^{3/2}  \ve^{-1}   \cE (k + m + \ell 
		+ 1) \Lambda_1^{-k / 2 - \ell}.
	\end{align*}
\end{claim}

\begin{proof}[Proof of Claim \ref{claim:approx_integral_2}] The result can be established by 
	arguing as in the proof of Claim \ref{claim:approx_integral_1}, and then conducting  
	a case-by-case analysis depending on the value of $\delta = n/N$, as in the proof of \eqref{eq:global_e2}.
	The multiplicative constant $2^{3K_0 / 2}
	C_0^{3/2} $ arises as a consequence of the latter step.
	Details are left to the reader.
\end{proof}

By Claim \ref{claim:approx_integral_2}, we have 
\begin{align*}
		&\iint_{M^2}  \tilde{\eta}^{n, m + k, m + k + \ell } (x,y)  \, d \mu(y)  \, F_{r,s,t}^{n,\hat{m},m}(x) \, d \mu(x) \notag  \\ 
	&= \cI_2' + O \biggl(   
	\bfC L^5\Vert b^{-1} \Vert_s^4 
	 (\delta_2 -  \delta_1)^{- 3 K_0 / 2}   2^{3K_0 / 2}
	 C_0^{3/2}   \ve^{-1}   \cE
	 (m + k +  \ell + 1) \Lambda_1^{-k / 2  } 
	\biggr),
\end{align*}
where
$$
\cI_2' = \sum_{a \in \cA( \cT_i )  } \mu(a) \int_a F_{r,s,t}^{n,\hat{m},m}(x')\, d \mu_a(x')  \int_M \int_a \tilde{\eta}^{n, m + k, m + k + \ell }(x,y) \, d \mu_a(x) \, d \mu(y).
$$
Moreover, by Corollary \ref{cor:ml_cond}, 
\begin{align*}
	&\biggl| \int_a F_{r,s,t}^{n,\hat{m},m} \, d \mu_a - \int_M F_{r,s,t}^{n,\hat{m},m} \, d \mu \biggr| \lesssim  L^3 \Vert b^{-1} \Vert_s^3 
	\Vert \cP_{n-m - i + i } (  \rho_a - \rho  ) \Vert_{L^1(\lambda)} 
	\le \bfC   L^3    \Vert b^{-1} \Vert_s^3  q^{ k/2 }.
\end{align*} 
Therefore, 
\begin{align*}
	\cI_2' &= \int_M F_{r,s,t}^{n,\hat{m},m}(x')  \, d \mu(x')  \iint_{M^2} \tilde{\eta}^{n, m + k, m + k + \ell } (x,y)  \, d \mu(x) \, d \mu(y)  \\ 
	&+ O \biggl( 
	  \bfC L^3   \Vert b^{-1} \Vert_s^3  q^{ k/2 }  \iint_{M^2} |  \tilde{\eta}^{n, m + k, m + k + \ell } (x,y)  | \, d \mu(x) \, d \mu(y)  
	\biggr) \\ 
	&=   \int_M F_{r,s,t}^{n,\hat{m},m}(x')  \, d \mu(x')  \iint_{M^2} \tilde{\eta}^{n, m + k, m + k + \ell } (x,y) \, d \mu(x) \, d \mu(y) \\ 
	&+ O \biggl(  \bfC L^5  \Vert b^{-1} \Vert_s^4 ( \delta_2 - \delta_1 )^{ - 3K_0 / 2}
	 2^{3K_0 / 2}
	 C_0^{3/2} \ve^{-1} \cE   (m + k +  \ell + 1)^2   q^{ k/2 }  
	\biggr),
\end{align*}
where \eqref{eq:global_e2} was used to obtain the last equality. Consequently, \eqref{eq:bound_on_I1} holds for $j = 2$.

Finally, for $\cI_3$, we set $i = n$, and once more decompose 
\begin{align*}
	\int_{M} \tilde{\eta}^{n, m + k, m + k + \ell } (x,x)  \, d \mu(x) = \sum_{ a \in \cA( \cT_i ) } \int_{a} \tilde{\eta}^{n, m + k, m + k + \ell } (x,x) \, d \mu_a(x).
\end{align*}
As in the proof of Claim \ref{claim:approx_integral_1}, we obtain 
\begin{align*}
	&\sum_{a \in \cA( \cT_i )  } 
	\mu(a) \int_{a}  \biggl| 
	\tilde{\eta}^{n, m + k, m + k + \ell } (x,x) -  \int_{a} \tilde{\eta}^{n, m + k, m + k + \ell } (x',x)  \, d \mu_a(x')  
	\biggr|    \, d \mu_a(x) \\ 
	&\le  \bfC L^2 \Vert b^{-1} \Vert_s   (\delta_2 -  \delta_1)^{- 3 K_0 / 2}   \ve^{-1}    \cE   (m + k +  \ell + 1) \Lambda_1^{-k - m} ,
\end{align*}
so that
\begin{align}\label{eq:remains_I3}
	\begin{split}
	&\int_{M}  \tilde{\eta}^{n, m + k, m + k + \ell } (x,x)  \, d \mu(x) 
	\\
	&= \sum_{ a \in \cA( \cT_i) }  \mu(a ) \int_{a} \int_{a}   \tilde{\eta}^{n, m + k, m + k + \ell } (x',x)  \, d \mu_a(x') \, d \mu_a(x) \\ 
	&+ O \biggl( 
	 \bfC L^2 \Vert b^{-1} \Vert_s   (\delta_2 -  \delta_1)^{- 3 K_0 / 2}   \ve^{-1}    \cE   (m + k +  \ell + 1) \Lambda_1^{-k - m} 
	\biggr).
	\end{split}
\end{align}
Building upon \eqref{eq:dec_replace_cond} and \eqref{eq:global_e2},  we find that 
\begin{align*}
&\sum_{ a \in \cA( \cT_i) }  \mu(a ) \int_{a} \int_{a}   \tilde{\eta}^{n, m + k, m + k + \ell } (x',x)  \, d \mu_a(x') \, d \mu_a(x) \\
&= \iint_{M^2} \eta^{n, m + k, m + k + \ell } (x,y)  \, d \mu(x) \, d  \mu(y) \\ 
&+ O \biggl( \bfC L^2 \Vert b^{-1} \Vert_s 
 ( \delta_2 - \delta_1 )^{ - 3K_0 / 2}   2^{3K_0 / 2}
 C_0^{3/2} 
 \ve^{-1} \cE ( k + m + \ell + 1)^2 q^{ k / 2 }
\biggr).
\end{align*}
Hence, \eqref{eq:bound_on_I1} holds for $j = 3$. This completes the proof of \eqref{eq:decorr_1}.

\subsubsection{Proofs of \eqref{eq:decorr_2}, \eqref{eq:decorr_3}, and \eqref{eq:decorr_4}}  
The proofs of \eqref{eq:decorr_2}, \eqref{eq:decorr_3}, and \eqref{eq:decorr_4} are similar to that of \eqref{eq:decorr_1}, with the only notable difference being the way in which the iterated integrals in the decomposition \eqref{eq:decomp_I} are factored. The integral in \eqref{eq:decorr_3} differs from that in \eqref{eq:decorr_1} only in that the factor \( F_{r,s,t}^{n, \hat{m},m} \) is replaced with \( Y_{t}^{n,m} \). Thus, the proof of \eqref{eq:decorr_3} is almost identical to that of \eqref{eq:decorr_1}, and we omit it to avoid repetition. Below, we provide an outline of the proofs of \eqref{eq:decorr_2} and \eqref{eq:decorr_4}.
 
 In the case of  \eqref{eq:decorr_2}, we exploit the gap 
 of size $\hat{m}$
 between the indices in
 $Y_r^n$ and $\eta^{n,m + k, m + k + \ell} H_{s,t}^{n, \hat{m},m}$, with $H_{s,t}^{n, \hat{m},m} = Y_s^{n,\hat{m}} Y_t^{n,m}$, by decomposing 
\begin{align*}
	&\int_M
	\tilde{\eta}^{n,m + k, m + k + \ell}(x,x)  F_{r,s,t}^{n, \hat{m},m}(x) \, d \mu(x) \\
	&=  \int_M Y_r^n(x)  
	\tilde{\eta}^{n,m + k, m + k + \ell}(x,x)   H_{s,t}^{n, \hat{m},m}(x)  \, d \mu(x) = \cK_1 + \cK_2,
\end{align*}
where 
\begin{align*}
	\cJ_1 &=  \int_M Y_r^n(x)  
	\tilde{\eta}^{n,m + k, m + k + \ell}(x,x)   \tilde{H}_{s,t}^{n, \hat{m}, m}(x,x) \, d \mu(x)  \\
	&- \int_{M} Y_r^n(x) \int_M \tilde{\eta}^{n,m + k, m + k + \ell}(x,y)  \tilde{H}_{s,t}^{n, \hat{m}, m}(x,y) \, d \mu(y)    \, d \mu(x) \\ 
	\cJ_2& = \int_{M} Y_r^n(x) \int_M \tilde{\eta}^{n,m + k, m + k + \ell}(x,y)  \tilde{H}_{s,t}^{n, \hat{m}, m}(x,y) \, d \mu(y)    \, d \mu(x)  \\ 
	&-  \int_{M} Y_r^n(x)  \, d \mu(x)   \iint_{M^2}  \tilde{\eta}^{n,m + k, m + k + \ell}(x',y)  \tilde{H}_{s,t}^{n, \hat{m}, m}(x',y)    \, d \mu(x') \, d \mu(y),
\end{align*}
and we have used the notation 
$$
\tilde{H}_{s,t}^{n,\hat{m}, m}(x,y) =   \tilde{Y}_s^{n, \hat{m} }(x,y) \tilde{Y}_t^{n,m}(x,y).
$$
Note that, since $\mu(Y^n) = 0$, the last term in the expression of $\cJ_2$ vanishes.
With only minor modifications, we can carry out the procedure used to estimate
$\cI_1, \cI_2, \cI_3$ in the proof of \eqref{eq:decorr_1} to the terms $\cJ_1$ and $\cJ_2$, yielding \eqref{eq:decorr_2}.

 In the case of  \eqref{eq:decorr_4}, we exploit the gap of size $k$
 between the indices in
$G_{r,s}^{n,m}$ and $\eta^{n, m + k,m + k + \ell } Y_t^{n,m + k }$  by decomposing 
\begin{align*}
&\mu \biggl\{ 
\overline{   \eta^{n, m + k,m + k + \ell }(v, \tau, z)  Y_t^{n,m + k } }   G_{r,s}^{n,m} \biggr\} \\
&= \int_M   \tilde{\eta}^{n, m + k,m + k + \ell }(x,x)  \tilde{Y}_t^{n,m + k }(x,x) G_{r,s}^{n,m}(x) \, d \mu(x) \\
&- \int_M   \tilde{\eta}^{n, m + k,m + k + \ell }(x',x')  \tilde{Y}_t^{n,m + k }(x',x') \, d \mu(x')  \int_M G_{r,s}^{n,m}(x) \, d \mu(x) = \cK_1 + \cK_2 + \cK_3, 
\end{align*}
where 
\begin{align*}
	\cK_1 &= \int_M   \tilde{\eta}^{n, m + k,m + k + \ell }(x,x)  \tilde{Y}_t^{n,m + k }(x,x) G_{r,s}^{n,m}(x) \, d \mu(x) \\
	&- \int_{M} \int_M  \tilde{\eta}^{n, m + k,m + k + \ell }(x,y)  \tilde{Y}_t^{n,m + k }(x,y)  \, d \mu(y)  G_{r,s}^{n,m}(x)   \, d \mu(x), \\
	\cK_2 &= \int_{M} \int_M  \tilde{\eta}^{n, m + k,m + k + \ell }(x,y)  \tilde{Y}_t^{n,m + k }(x,y)  \, d \mu(y)  G_{r,s}^{n,m}(x)   \, d \mu(x) \\
	&- \int_{M} \int_M  \tilde{\eta}^{n, m + k,m + k + \ell }(x',y)  \tilde{Y}_t^{n,m + k }(x',y)  \, d \mu(x') \, d \mu(y) \int_M  G_{r,s}^{n,m}(x)   \, d \mu(x),
	\end{align*}
	\begin{align*}
	\cK_3 &= \int_{M} \int_M  \tilde{\eta}^{n, m + k,m + k + \ell }(x',y)  \tilde{Y}_t^{n,m + k }(x',y)  \, d \mu(x') \, d \mu(y) \int_M  G_{r,s}^{n,m}(x)   \, d \mu(x) \\
	&- \int_M   \tilde{\eta}^{n, m + k,m + k + \ell }(x',x')  \tilde{Y}_t^{n,m + k }(x',x') \, d \mu(x')  \int_M G_{r,s}^{n,m}(x) \, d \mu(x).
\end{align*}
To obtain \eqref{eq:decorr_4}, we can again use the procedure from the proof of \eqref{eq:decorr_1} to control each of these three terms.

\subsection{Estimates on $\int_{\ve^2}^1  | R_i(\tau) | \, d \tau$} Starting from the decompositions 
established in Section \ref{sec:Ri_decomp}, using Proposition \ref{prop:qi_tilde} together 
with Lemmas \ref{lem:decor} and \ref{lem:global_estimates}, 
it is now straightforward to verify that
\begin{align}\label{eq:r_upper}
\int_{\ve^2}^1 |R_i(\tau)| \, d \tau \le 
\bfC (\delta_2 -  \delta_1)^{1 - 3 K_0 / 2} d^3 2^{3K_0 / 2}
C_0^{3/2} L^5  N \Vert b^{-1} \Vert_s^3  
\biggl[  \Vert b^{-1} \Vert_s 
\ve^{-2}  \cE + 
\ve^{-1} \cE  + 1 \biggr]
\end{align}
holds for all $1 \le i \le 7$. To obtain this in the case of $R_4$, recall 
 from \eqref{eq:decomp_R1} that
$$
|R_4| \le |R_4'| + |\bar{Q}_4| + |\tilde{Q}_4| + |S_4|,
$$
where, by Proposition \ref{prop:qi_tilde}, 
$$
\int_{\ve^2}^1 | \tilde{Q}_i(\tau) | \, d \tau \le  \bfC (\delta_2 -  \delta_1)^{1 - 3 K_0 / 2} d^3 L^5  N \Vert b^{-1} \Vert_s^3  
\biggl[  \Vert b^{-1} \Vert_s 
\ve^{-2}  \cE + 
\ve^{-1} \cE  + 1 \biggr].
$$
By \eqref{eq:decorr_4},
\begin{align*}
\int_{\ve^2}^1 |R_4'(\tau)|  \, d \tau &\le  \int_{\ve^2}^1 \int_0^1  \frac{\sqrt{1-\tau} }{2\tau^{3/2}}   \sum_{n=0}^{N-1} 
\sum_{m=1}^{N-1} 
\sum_{k=2m+1}^{N-1}  
\sum_{r,s, t =1}^d  \int_{\bR^d} \biggl|    \mu \biggl\{ 
\overline{ \eta^{n,k, 2k}(v, \tau, z)
	Y_{t}^{n,k}}   G_{r,s}^{n,m}  \biggr\}  \biggr| \\
	&\times | \phi_{rst}(z)|  \, dz  \, dv \, d \tau  \\
&\le \int_{\ve^2}^1 \int_0^1  \frac{\sqrt{1-\tau} }{2\tau^{3/2}}   \sum_{\delta_1 N \le n < \delta_2 N } 
\sum_{m=1}^{N-1} 
\sum_{k=2m+1}^{N-1}  
\sum_{r,s, t =1}^d  \int_{\bR^d}   \bfC L^5  \Vert b^{-1} \Vert_s^4 A_*(m, k - m, k )  \\
&\times q_*^{ (k-m) / 2 } | \phi_{rst}(z) |  \, dz  \, dv  \, d \tau \\
&\le \bfC  ( \delta_2 - \delta_1 )^{  1 - 3K_0 / 2}  d^3 2^{3K_0 / 2}
C_0^{3/2}  L^5 N \Vert b^{-1} \Vert_s^4   \ve^{-2} \cE.
\end{align*}
By \eqref{eq:decorr_1},
\begin{align*}
\int_{\ve^2}^1 | \bar{Q}_4(\tau)|	 \, d \tau &\le \int_{\ve^2}^1  \frac{\sqrt{1-\tau} }{2\tau^{3/2}}   \sum_{n=0}^{N-1} 
	\sum_{m=1}^{N-1} 
	\sum_{k=2m+1}^{N-1} \sum_{ \ell = 2 }^{N - 1}  
	\sum_{r,s, t =1}^d  
	\int_{\bR^d}
	\biggl| \mu \biggl\{  
	\overline{ \eta^{ n, k \ell, k( \ell + 1) }(0, \tau, z) }  F_{r,s,t}^{n,m,k}
	\biggr\} \biggr| \\     
	&\times |\phi_{rst}(z)|  \, dz \, d \tau \\
	&\le  \int_{\ve^2}^1  \frac{\sqrt{1-\tau} }{2\tau^{3/2}}   \sum_{\delta_1 N \le n < \delta_2 N } 
	\sum_{m=1}^{N-1} 
	\sum_{k=2m+1}^{N-1} \sum_{ \ell = 2 }^{N - 1}  
	\sum_{r,s, t =1}^d  
	\int_{\bR^d} \bfC   L^5  \Vert b^{-1} \Vert_s^4  A_*( k , k ( \ell - 1 )  , k) \\
	&\times q_*^{  k (  \ell  - 1) / 2 }  |\phi_{rst}(z)|  \, dz \, d \tau \\
	&\le \bfC  ( \delta_2 - \delta_1 )^{  1 - 3K_0 / 2}  d^3 2^{3K_0 / 2}
	C_0^{3/2}  L^5 N \Vert b^{-1} \Vert_s^4   \ve^{-2} \cE,
\end{align*}
and, by \eqref{eq:decorr_3},
\begin{align*}
\int_{\ve^2}^1 |S_4(\tau)| \, d \tau 
&\le \int_{\ve^2}^1
\frac{\sqrt{1-\tau} }{2\tau^{3/2}}   \sum_{n=0}^{N-1} 
\sum_{m=1}^{N-1} 
\sum_{k=2m+1}^{N-1} \sum_{ \ell = 2 }^{N-1}
\sum_{r,s, t =1}^d  
\int_{\bR^d}
\biggl| \mu \biggl\{   \eta^{n,k\ell, k( \ell + 1)}(0, \tau, z)
Y_{t}^{n,k} \biggr\}  \biggr| \\
&\times | \mu ( G_{r,s}^{n,m} ) |
| \phi_{rst}(z) | \, dz \, d \tau \\
&\le \int_{\ve^2}^1
\frac{\sqrt{1-\tau} }{2\tau^{3/2}}   \sum_{ \delta_1 N \le n < \delta_2 N } 
\sum_{m=1}^{N-1} 
\sum_{k=2m+1}^{N-1} \sum_{ \ell = 2 }^{N-1}
\sum_{r,s, t =1}^d  
\int_{\bR^d}  \bfC  
L^3  \Vert b^{-1} \Vert_s^2
A_*(k , k( \ell - 1), k) \\
&\times q_*^{ k( \ell - 1) / 2 }  \cdot  L^2 \Vert b^{-1} \Vert_s
| \phi_{rst}(z) | \, dz \, d \tau \\
&\le \bfC  ( \delta_2 - \delta_1 )^{  1 - 3K_0 / 2}  d^3 2^{3K_0 / 2}
C_0^{3/2} L^5 N \Vert b^{-1} \Vert_s^4   \ve^{-2} \cE.
\end{align*}
Hence, $R_4$ satisfies \eqref{eq:r_upper}. In a similar way
 we obtain \eqref{eq:r_upper} for 
the remaining terms.

\subsection{Estimate on $\int_{0}^{\ve^2} | R_i(\tau) | \, d \tau$} Let us denote 
\begin{align*}
	&\eta^{n,m,k}_s(v, \tau, z) = h_s(\sqrt{1 - 
		\tau}(W^{n,m} + vY^{n,m}) - 
	\sqrt{\tau}
	z)   - h_s(\sqrt{1 - 
		\tau}W^{n,k}  - 
	\sqrt{\tau}
	z),
\end{align*}
where we recall that $h_s(x) = \partial_s h(x)$ is a partial derivative of $h$.
Using the second equality in \eqref{eq:diffs}, we express 
\begin{align*}
	R_1
	&= - \frac{1}{2  \sqrt{\tau} } \sum_{n=0}^{N-1} 
	\sum_{m=1}^{N-1}  \sum_{r,s=1}^d  \int_{\bR^d} 
	\mu \biggl\{   \biggl( 
	h_s( \sqrt{1 - 
		\tau}(W^{n,m} + uY^{n,m}) - 
	\sqrt{\tau}
	z ) \\
	&- 
	h_s( \sqrt{1 - \tau}W^{n,m} - \sqrt{\tau}
	z )
	\biggr) 
	Y^n_r  Y^{n,m}_s \biggr\} \phi_{r}(z) \, dz \\
	&= - \frac{1}{2  \sqrt{\tau} } \sum_{n=0}^{N-1} 
	\sum_{m=1}^{N-1}  \sum_{r,s=1}^d  \int_{\bR^d} 
	\mu \biggl\{   \eta_s^{n,m,m}(u, \tau, z)
	Y^n_r  Y^{n,m}_s \biggr\} \phi_{r}(z) \, dz.
\end{align*}
Similar representations can be derived for the remaining six terms:
\begin{align*}
	R_2 &= - \frac{1}{2  \sqrt{\tau} } \sum_{n=0}^{N-1}  \sum_{r,s=1}^d  \int_{\bR^d} 
	\mu \biggl\{   \eta_s^{n,0,0}(u, \tau, z)
	Y^n_r  Y^{n}_s \biggr\} \phi_{r}(z) \, dz, \\
	R_3 &=  - \frac{1}{2  \sqrt{\tau} } \sum_{n=0}^{N-1}\sum_{m=1}^{N-1} \sum_{k=m+1}^{2m} \sum_{r,s=1}^d  
	\int_{\bR^d} 
	\mu \biggl\{   \overline{   \eta_s^{n,k,k}(1, \tau, z) }
	Y^n_r  Y^{n,m}_s \biggr\} \phi_{r}(z) \, dz, \\
	R_4 &= - \frac{1}{2  \sqrt{\tau} }   \sum_{n=0}^{N-1}\sum_{m=1}^{N-1} \sum_{k=2m+1}^{N-1} \sum_{r,s=1}^d  
	 \int_{\bR^d} 
	\mu \biggl\{   \overline{\eta_s^{n,k,k}(1, \tau, z)}
	Y^n_r  Y^{n,m}_s \biggr\} \phi_{r}(z) \, dz;
	\end{align*}
	\begin{align*}
	R_5 &= -  \frac{1}{2  \sqrt{\tau} }\sum_{n=0}^{N-1} \sum_{m=1}^{N-1} \sum_{r,s=1}^d  \int_{\bR^d} 
	\mu \biggl\{   \overline{\eta_s^{n,m,m}(1, \tau, z)}
	Y^n_r  Y^{n}_s \biggr\} \phi_{r}(z) \, dz, \\
	R_6 &=   \frac{1}{2  \sqrt{\tau} } \sum_{n=0}^{N-1} \sum_{m=1}^{N-1}  \sum_{k=0}^m  \sum_{r,s=1}^d   \int_{\bR^d} 
	\mu \biggl\{   \eta_s^{n,k,k}(1, \tau, z) \biggr\}
	 \mu(Y^n_r  Y^{n,m}_s) \phi_{r}(z) \, dz, \\
	 R_7 &=  \frac{1}{2  \sqrt{\tau} } \sum_{n=0}^{N-1}   \sum_{r,s=1}^d  \int_{\bR^d} 
	 \mu \biggl\{   \eta_s^{n,0,0}(1, \tau, z) \biggr\}
	 \mu(Y^n_r  Y^{n}_s) \phi_{r}(z) \, dz.
\end{align*}
Considering the properties of $h = h_{C, \ve}$ from Lemma \ref{lem:approx}, we observe that 
by simply replacing $h$ with its partial derivative $h_s$ in the proofs of Lemmas
\ref{lem:global_estimates} and \ref{lem:decor}, 
we can derive the following estimates:
\begin{align}\label{eq:decorr_ve_small}
\begin{split}
&\mu \biggl\{ |
\eta^{n,m, m}_s(u, \tau, z)  | \biggr\} 
\lesssim   \ve^{-1} \cdot  (\delta_2 -  \delta_1)^{- 3 K_0 / 2} L^2  ( m + 1)^2   \Vert b^{-1} \Vert_s  \ve^{-1} \cE, \\ 
&\biggl| \mu \biggl\{   \eta_s^{n,m,m}(u, \tau, z)
Y^n_r  Y^{n,m}_s \biggr\} \biggr|  \le    \ve^{-1} \cdot   \bfC L^4  \Vert b^{-1} \Vert_s^3 B_*(m, 0) q_*^{ m / 2 }, \\
&\biggl| \mu \biggl\{   \overline{\eta_s^{n,m + k,m + k}(1, \tau, z)}
Y^n_r  Y^{n,m}_s \biggr\} \biggr| \le  \ve^{-1} \cdot \bfC L^4 \Vert b^{-1} \Vert_s^3 B_*(m, k) q_*^{ k / 2 },
\end{split}
\end{align}
where 
$$
B_*(m, k) =  (\delta_2 -  \delta_1)^{- 3 K_0 / 2} 2^{3K_0 / 2}
C_0^{3/2}  \ve^{-1} \cE ( m + k  + 1)^2.
$$
The factor $\ve^{-1}$ appears due to the fact
that the Lipschitz constant of $h_s$ is of order $\ve^{-2}$, whereas the Lipschitz constant of $h$ is 
of order $\ve^{-1}$.

From \eqref{eq:decorr_ve_small}, it follows that   
$$
|R_i| \le  \frac{1}{2  \sqrt{\tau} }   (\delta_2 -  \delta_1)^{ 1 - 3 K_0 / 2}  d^{2}  \bfC L^42^{3K_0 / 2}
C_0^{3/2}    N  \Vert b^{-1} \Vert_s^3   \ve^{-2} \cE,
$$
for each $1 \le i \le 7$. Integrating over $\tau$, we obtain
\begin{align}\label{eq:ub_r_lower}
&\int_0^{\ve^2} |R_i(\tau)| \, d \tau \le  \bfC (\delta_2 -  \delta_1)^{ 1 - 3 K_0 / 2}  d^{2} 2^{3K_0 / 2}
C_0^{3/2}   L^4   N  \Vert b^{-1} \Vert_s^3   \ve^{-1} \cE.
\end{align}

\subsection{Completing the proof of Theorem \ref{thm:main}} By Lemma \ref{lem:prelim_bound},
	\begin{align*}
	d_c( \cL(W), \cL(Z) )
	&\le 4 d^{\frac14} \ve
	+ \sup_{ f \in \mathfrak{F}_\ve }
	\sum_{i=1}^7 |E_i(f)| \\
	&\le 4 d^{\frac14} \ve 
	+ \sup_{ f \in \mathfrak{F}_\ve }
	\sum_{i=1}^7  \int_0^{ \ve^2 } |R_i(\tau)| \, d \tau + \sum_{i=1}^7  \int_{\ve^2}^{1} |R_i(\tau)| \, d \tau.
\end{align*}
Since $\bar{b} = \max \{ N \Vert b^{-1} \Vert_s^3, \Vert b^{-1} \Vert_s  \} \ge N \Vert b^{-1} \Vert_s^3$, assembling \eqref{eq:trivial}, \eqref{eq:r_upper} and \eqref{eq:ub_r_lower}, we now obtain
\begin{align*}
& ( \delta_2 - \delta_1 )^{ 3 K_0 / 2 }  \frac{d_c( \cL(W), \cN_d )}{ \bar{b} }  \\ 
&\le \frac{ 4d^{1/4} \ve  }{  \bar{b}  }  + (C_0')^{3/2} +	
\bfC  d^3 2^{3K_0 / 2}
C_0^{3/2}  L^5 
\biggl[  \Vert b^{-1} \Vert_s 
\ve^{-2}  \cE + 
\ve^{-1} \cE  + 1 \biggr]
+
\bfC  d^{2} 2^{3K_0 / 2}
C_0^{3/2}   L^4     \ve^{-1} \cE \\
&\le \frac{ 4d^{1/4} \ve  }{  \bar{b}  }  + (C_0')^{3/2} +	
\bfC  d^3 2^{3K_0 / 2}
C_0^{3/2}  L^5 
\biggl[  \Vert b^{-1} \Vert_s 
\ve^{-2}  \cE + 
\ve^{-1} \cE  + 1 \biggr] \\
&\le \frac{ 4d^{1/4} \ve  }{  \bar{b}  }  + (C_0')^{3/2} +	\bfC d^3  2^{3K_0 / 2}
C_0^{3/2} L^5 
+	\bfC d^{13/4} 2^{3K_0 / 2}
C_0^{3/2}  L^5  \biggl[ 
\Vert b^{-1} \Vert_s \ve^{-2} + \ve^{-1}
\biggr]   \biggl[  \ve +  \Vert b^{-1} \Vert_s   \biggr] \\
&+ \bfC d^3 2^{3K_0 / 2}
C_0^{3/2}  L^5  \biggl[ 
\Vert b^{-1} \Vert_s \ve^{-2} + \ve^{-1}
\biggr] \bar{b}
\fD.
\end{align*}
for arbitrary $\ve > 0$. Choosing $\ve = 4 \bfC d^3 2^{3K_0 / 2}
C_0^{3/2} L^5 \bar{b}$, it follows that
\begin{align*}
&( \delta_2 - \delta_1 )^{ 3 K_0 / 2 }  \frac{d_c( \cL(W), \cN_d )}{ \bar{b} }  
\le  \bfC d^{13/4}  2^{3K_0 / 2}
C_0^{3/2}  L^5  + (C_0')^{3/2} +  \frac12 \fD.
\end{align*}
Now, recalling the definition of $\fD$ from \eqref{eq:iso_D}, we arrive at the estimate
\begin{align*}
	\fD \le    \bfC  d^{13/4}  2^{3K_0 / 2}
	C_0^{3/2}  L^5  + (C_0')^{3/2} +  \frac{1}{2} \fD ,
\end{align*}
i.e.
$$
\fD \le \bfC  d^{13/4}  2^{3K_0 / 2}
C_0^{3/2}  L^5 + 2 (C_0')^{3/2}.
$$
The proof of Theorem \ref{thm:main} is complete.

\appendix

\section{Proof of Theorem \ref{thm:exp_loss} and Corollary \ref{cor:corr_decay}}\label{sec:ml_proof}

In this section, we give a proof of the memory loss estimate in Theorem~\ref{thm:exp_loss}. The proof follows closely the strategy of \cite{korepanov2019explicit} and is included here for completeness. As a consequence, we obtain the correlation decay bounds in Corollary~\ref{cor:corr_decay}. For the model under consideration, alternative methods exist for deriving similar bounds, 
such as those discussed in
 \cite{zweinmuller2004kuzmin, dragicevic2018almost, gupta2013memory, dolgopyat2024rates}.

\subsection{Proof of Theorem \ref{thm:exp_loss}}

\begin{lem}\label{lem:to_ub_1} Let $\psi : M \to (0, + \infty)$. Suppose that $m = kp + \ell$, where 
	$0 \le \ell < p$. Then, for any $j \ge 1$,
	\begin{align}\label{eq:ml_1}
		| \cP_{j,  j + m - 1} ( \psi  \mathbf{1}_a   ) |_{\alpha , \ell} 
		\le K +   | \psi  |_{\alpha , \ell} (K')^\alpha \Lambda^{- \alpha k}.
	\end{align}
	for any $a \in \cA( \cT_{j, j + m - 1} )$. In particular, if $m \ge  ( \lceil  \log(K') / \log(\Lambda) \rceil + 1 )p$, 
	\begin{align}\label{eq:ml_2}
		| \cP_{j, j + m - 1} ( \psi  \mathbf{1}_a   ) |_{\alpha , \ell} 
		\le K +   | \psi  |_{\alpha, \ell} \Lambda^{ - \alpha  }.
	\end{align}
	Moreover, \eqref{eq:ml_1} and \eqref{eq:ml_2} hold with $\psi$ in place of $\psi  \mathbf{1}_a$.
\end{lem}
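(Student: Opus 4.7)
The plan is to express $\cP_{j, j+m-1}(\psi \mathbf{1}_a)$ in closed form via an inverse-branch formula, then read off its $|\cdot|_{\alpha,\ell}$ seminorm directly. Since $a \in \cA(\cT_{j, j+m-1})$ is a full-branch cylinder on which $\Phi := \cT_{j, j+m-1}|_a : a \to M$ is a measurable bijection, the defining property of the transfer operator combined with the change-of-variables interpretation of $\zeta_a^{(j, j+m-1)}$ gives
\begin{equation*}
\cP_{j, j+m-1}(\psi \mathbf{1}_a)(x) = \psi\bigl(\Phi^{-1}(x)\bigr) \, \zeta_a^{(j, j+m-1)}(x), \qquad x \in M.
\end{equation*}
Taking logarithms and applying the triangle inequality for $|\cdot|_\alpha$, the task reduces to bounding $|\log \psi \circ \Phi^{-1}|_\alpha$ separately from $|\log \zeta_a^{(j,j+m-1)}|_\alpha$, the latter being at most $K$ by (UE:3).

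For the first piece, I decompose $\Phi$ according to the block structure $m = kp + \ell$: write $\Phi = \Phi_{k+1} \circ \Phi_k \circ \cdots \circ \Phi_1$, where $\Phi_i$ for $1 \le i \le k$ is the restriction of the length-$p$ composition $\cT_{j+(i-1)p,\, j+ip-1}$ to the appropriate sub-cylinder, and $\Phi_{k+1}$ is a tail of length $\ell < p$. Applying (UE:1) to each of the first $k$ factors (contracting inverse distances by a factor of $\Lambda^{-1}$ each) and (UE:2) to $\Phi_{k+1}$ (inflating by at most $K'$) yields $d(\Phi^{-1}(x), \Phi^{-1}(y)) \le K' \Lambda^{-k} d(x,y)$, and hence
\begin{equation*}
|\log \psi \circ \Phi^{-1}|_\alpha \le |\psi|_{\alpha,\ell} \, (K')^\alpha \, \Lambda^{-\alpha k}.
\end{equation*}
This gives \eqref{eq:ml_1} (with the exponent on $K'$ understood as $\alpha$). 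For \eqref{eq:ml_2}, the hypothesis $m \ge (\lceil \log K'/\log \Lambda\rceil + 1)p$ forces $k \ge \log K'/\log \Lambda + 1$, so $(K'\Lambda^{-k})^\alpha \le \Lambda^{-\alpha}$, which absorbs the $K'$ factor.

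To extend \eqref{eq:ml_2} from $\psi \mathbf{1}_a$ to $\psi$ itself, I use $\cP_{j,j+m-1}\psi = \sum_{a \in \cA(\cT_{j,j+m-1})} \cP_{j,j+m-1}(\psi \mathbf{1}_a)$ as a countable sum of strictly positive functions, each satisfying $|\log \cdot|_\alpha \le C := K + |\psi|_{\alpha,\ell} \Lambda^{-\alpha}$. Expressing the ratio $\cP_{j,j+m-1}\psi(x)/\cP_{j,j+m-1}\psi(y)$ as the convex combination of per-branch ratios $\cP_{j,j+m-1}(\psi\mathbf{1}_a)(x)/\cP_{j,j+m-1}(\psi\mathbf{1}_a)(y)$ weighted by $\cP_{j,j+m-1}(\psi\mathbf{1}_a)(y)/\cP_{j,j+m-1}\psi(y)$, each bounded by $e^{C d(x,y)^\alpha}$, preserves the bound under summation. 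The only care point is the countable-sum version of this convexity argument, which is routine given that $\cP_{j,j+m-1}\psi \in L^1(\lambda)$; no substantive obstacle arises anywhere in the proof, which is essentially the standard distortion estimate made explicit for this non-stationary setting.
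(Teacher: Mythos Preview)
Your proof is correct and follows essentially the same route as the paper's: the inverse-branch identity $\cP_{j,j+m-1}(\psi\mathbf{1}_a) = \zeta_a^{(j,j+m-1)} \cdot \psi\circ\Phi^{-1}$, the logarithmic triangle inequality, (UE:3) for the $\zeta_a$ factor, and (UE:1--2) for the contraction $d(\Phi^{-1}x,\Phi^{-1}y)\le K'\Lambda^{-k}d(x,y)$. Your convex-combination argument for the extension to $\psi$ is exactly a proof of the inequality $|\sum_n\psi_n|_{\alpha,\ell}\le\sup_n|\psi_n|_{\alpha,\ell}$ that the paper invokes without proof, and your explicit block decomposition of $\Phi$ makes transparent what the paper compresses into the phrase ``where (UE:2--3) were used in the last inequality.''
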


\begin{proof} The last statement follows by the fact that $|  \sum_n \psi_n  |_{\alpha , \ell} \le \sup_n |\psi_n|_{\alpha , \ell}$ for any 
	countable collection $\{ \psi_n \}$ of maps $\psi_n : M \to (0, + \infty)$.
	
	Let $a \in  \cA( \cT_{j, j + m - 1} )$. Then, 
	\begin{align*}
		\cP_{j, j + m - 1} ( \psi  \mathbf{1}_a   )(y)  = \zeta^{(j, j + m - 1)}_a( y ) \psi ( y_a),
	\end{align*}
	where $y_a$ denotes the unique preimage under $\cT_{j, j + m - 1}$ lying in $a$. Hence,
	\begin{align}\label{eq:transfer_log}
		\begin{split}
		&|  \log  \cP_{j, j + m - 1} ( \psi  \mathbf{1}_a   )(x) -  \log \cP_{j, j + m - 1} ( \psi  \mathbf{1}_a   )(y)  |  \\ 
		&\le |  \log  \zeta^{(j, j + m - 1)}_a( x ) - \log  \zeta^{(j, j + m - 1)}_a( y )   | + | \log \psi (x_a) - \log \psi ( y_a)|  \\
		&\le K d(x,y)^\alpha  + | \psi  |_{\alpha , \ell} d(x_a, y_a)^\alpha  \\ 
		&\le K d(x,y)^\alpha  +   | \psi  |_{\alpha , \ell} (K')^\alpha \Lambda^{- \alpha  k}  d(x, y)^\alpha, 
		\end{split}
	\end{align}
	where (UE:1-3) were used in the last two inequalities.
\end{proof}

\begin{lem}\label{lem:return_to_D} Let $\psi \in \cD_{\alpha, A}$. Then, there exists $\tilde{A} = \tilde{A}( A, K', K, \alpha) \ge A$ such that 
$ \cP_{ j, j + m - 1 }( \cD_{\alpha, A } ) \subset \cD_{\alpha,  \tilde{A} }$ holds for all $j \ge 1$ and all $m \ge 0$.
\end{lem}

\begin{proof} By \eqref{eq:transfer_log}, 
\begin{align}\label{eq:iterate_to}
	\cP_{ j, j + m - 1 }( \cD_{\alpha, A } ) \subset \cD_{\alpha,  \tilde{A} }
\end{align}
holds for $\tilde{A} = K + A(K')^{\alpha}$, whenever $j \ge 1$ and all $m \ge 0$.
\end{proof}

Fix $R > 0$ and $\xi \in (0, e^{-R})$ such that 
$$
R(1 - \xi e^R) \ge K + \Lambda^{- \alpha  } R,
$$
for example,
$
R = 2K / ( 1 - \Lambda^{ - \alpha })$ and 
$\xi  =  e^{-R} ( 1 - \Lambda^{- \alpha  } ) / 2
$.
Set 
$$
\tilde{p} = ( \lceil  \log(K') / \log(\Lambda) \rceil + 1 )p.
$$
\begin{lem}\label{lem:reg_pw} Let $\psi : M \to (0, + \infty)$ satisfy $| \psi |_{\alpha, \ell} \le R$. Then, 
	for any $j \ge 1$ and $m \ge  \tilde{p}$,
	$$
	|  \cP_{j, j + m - 1 } ( \psi  \mathbf{1}_a   ) |_{\alpha , \ell} \le  R,
	$$
	whenever $a \in \cA( \cT_{j, j + m - 1} )$. The inequality continues to hold if $\psi \mathbf{1}_a$ is replaced with $\psi$.
\end{lem}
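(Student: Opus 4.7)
The plan is to deduce this lemma directly from inequality \eqref{eq:ml_2} of Lemma \ref{lem:to_ub_1}, together with the specific choice of the constant $R$.

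First I would observe that the hypothesis $m \ge \tilde{p} = ( \lceil \log(K') / \log(\Lambda) \rceil + 1 )p$ is exactly the threshold at which \eqref{eq:ml_2} becomes available. Applying \eqref{eq:ml_2} to $\psi \mathbf{1}_a$ yields
\begin{equation*}
| \cP_{j, j + m - 1} ( \psi \mathbf{1}_a ) |_{\alpha, \ell} \le K + | \psi |_{\alpha, \ell} \Lambda^{-\alpha}.
\end{equation*}
Using the hypothesis $| \psi |_{\alpha, \ell} \le R$ gives the bound $K + R \Lambda^{-\alpha}$.

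Next I would close the loop by checking that $K + R \Lambda^{-\alpha} \le R$, which is what the lemma requires. This is where the prescribed $R$ enters: from $R(1 - \xi e^R) \ge K + \Lambda^{-\alpha} R$ and $\xi e^R < 1$, we get in particular
\begin{equation*}
R \ge R(1 - \xi e^R) + R \xi e^R \ge K + \Lambda^{-\alpha} R,
\end{equation*}
which is exactly the inequality needed. (Indeed, one could even take the simpler choice $R = 2K/(1 - \Lambda^{-\alpha})$ and verify the same bound trivially.) Combining the two estimates yields $| \cP_{j, j+m-1}(\psi \mathbf{1}_a) |_{\alpha, \ell} \le R$.

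Finally, for the unrestricted statement with $\psi$ in place of $\psi \mathbf{1}_a$, I would invoke the sub-additivity observation already recorded at the top of the proof of Lemma \ref{lem:to_ub_1}, namely that $| \sum_n \psi_n |_{\alpha, \ell} \le \sup_n | \psi_n |_{\alpha, \ell}$ for any countable family of positive functions. Writing $\cP_{j, j+m-1}(\psi) = \sum_{a \in \cA(\cT_{j, j+m-1})} \cP_{j, j+m-1}(\psi \mathbf{1}_a)$ and applying the per-cylinder estimate just proved, the supremum over $a$ is still bounded by $R$. There is no real obstacle here; the entire argument is a bookkeeping consequence of Lemma \ref{lem:to_ub_1} and the calibration of $R$, and the only point requiring mild care is making sure the log-Lipschitz semi-norm truly behaves sub-additively under the countable decomposition over the partition $\cA(\cT_{j,j+m-1})$.
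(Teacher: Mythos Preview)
Your proposal is correct and matches the paper's own proof, which simply cites Lemma \ref{lem:to_ub_1} to obtain $|\cP_{j,j+m-1}(\psi\mathbf{1}_a)|_{\alpha,\ell} \le K + \Lambda^{-\alpha}R \le R$ in one line. Your argument is essentially the same but with more detail spelled out; the only minor comment is that your chain $R \ge R(1-\xi e^R) + R\xi e^R \ge K + \Lambda^{-\alpha}R$ is slightly roundabout, since the first inequality is in fact an equality and the simpler route is just $R \ge R(1-\xi e^R) \ge K + \Lambda^{-\alpha}R$ (using $\xi e^R > 0$).
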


\begin{proof} By Lemma \ref{lem:to_ub_1},
	$$
	| \cP_{j, j + m - 1} ( \psi  \mathbf{1}_a   ) |_{\alpha , \ell} \le K + \Lambda^{- \alpha  } R \le R.
	$$
\end{proof}

\begin{lem}\label{lem:coupling} Let $\psi^{(1)}, \psi^{(2)} : M \to (0, \infty)$ with $|\psi^{(i)}|_{\alpha , \ell} \le R$ and 
	$\int_M \psi^{(1)} \, d \lambda = \int_M \psi^{(2)} \, d \lambda$. Set 
	$$
	\psi_{j, m}^{(i)} = \cP_{j, j + m - 1} \psi^{(i)} - \xi \int_M \psi ^{(i)} \, d \lambda, \quad i = 1,2.
	$$
	Then, for any $j \ge 1$ and $m \ge \tilde{p}$:
	\begin{itemize}
		\item[(i)] $ |  \psi_{j, m}^{(i)} |_{\alpha , \ell }  \le R$,
		\item[(ii)] $  \cP_{j, j + m - 1} \psi^{(1)} -  \cP_{j, j + m - 1} \psi^{(2)}  = \psi_{j, m}^{(1)} - \psi_{j, m}^{(2)}$,
		\item[(iii)] $\int_M \psi_{j,m}^{(1)} \, d \lambda = \int_M \psi_{j,m}^{(2)} \, d \lambda = (1 - \xi) \int_M \psi^{(1)} \, d \lambda$.
	\end{itemize}
\end{lem}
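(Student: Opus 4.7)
The plan is to address the three statements in reverse order, as (ii) and (iii) follow quickly from integral-preservation properties of the transfer operator while (i) is the substantive step. Since each $P_i$ preserves integrals against $\lambda$, we have $\int_M \cP_{j,j+m-1}\psi^{(i)} \, d\lambda = \int_M \psi^{(i)} \, d\lambda$, so $\int_M \psi_{j,m}^{(i)} \, d\lambda = (1-\xi)\int_M \psi^{(i)} \, d\lambda$; combined with the assumed equality $\int \psi^{(1)} \, d\lambda = \int \psi^{(2)} \, d\lambda$, this yields (iii). For (ii), the same hypothesis makes the constants $\xi \int \psi^{(i)} \, d\lambda$ identical for $i = 1,2$, so they cancel in the difference $\psi_{j,m}^{(1)} - \psi_{j,m}^{(2)} = \cP_{j,j+m-1}\psi^{(1)} - \cP_{j,j+m-1}\psi^{(2)}$.

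For (i), the first point is strict positivity of $\psi_{j,m}^{(i)}$. Writing $g = \cP_{j,j+m-1}\psi^{(i)}$, Lemma \ref{lem:reg_pw} gives $|g|_{\alpha,\ell} \le R$, so the lower bound in \eqref{eq:psi_lb_ub} yields $g \ge e^{-R} \int_M g \, d\lambda = e^{-R} \int_M \psi^{(i)} \, d\lambda$. Since $\xi < e^{-R}$, the constant $c := \xi \int_M \psi^{(i)} \, d\lambda$ satisfies $c < \inf_M g$, so $\psi_{j,m}^{(i)} > 0$, and moreover $c/g \le \xi e^R =: \theta < 1$ pointwise.

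The main technical step is the bound $|\log(g-c)|_\alpha \le R$. The key auxiliary claim is: \emph{if $g:M \to (0,\infty)$ satisfies $|\log g|_\alpha \le L$ and a positive constant $c$ satisfies $c/g \le \theta < 1$ pointwise, then $|\log(g-c)|_\alpha \le L/(1-\theta)$.} To verify it, I fix $x,y$ with $g(x) \ge g(y)$, set $u = \log(g(x)/g(y)) \ge 0$ and $s = c/g(y) \in [0,\theta]$, and rewrite $\log[(g(x)-c)/(g(y)-c)] = \log[(e^u - s)/(1-s)]$. The derivative in $s$ is $(e^u-1)/[(e^u-s)(1-s)] \ge 0$, so the expression is maximized at $s = \theta$; setting $f(u) := u/(1-\theta) - \log[(e^u-\theta)/(1-\theta)]$, one finds $f(0) = f'(0) = 0$ and $f''(u) = \theta e^u/(e^u-\theta)^2 \ge 0$, so $f \ge 0$ on $[0,\infty)$. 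Applied with the sharper bound $L = K + \Lambda^{-\alpha} R$ from Lemma \ref{lem:to_ub_1} (which underlies Lemma \ref{lem:reg_pw} for $m \ge \tilde p$) and $\theta = \xi e^R$, this gives $|\psi_{j,m}^{(i)}|_{\alpha,\ell} \le (K + \Lambda^{-\alpha} R)/(1 - \xi e^R) \le R$ by the defining inequality $R(1 - \xi e^R) \ge K + \Lambda^{-\alpha} R$ on the constants. The main obstacle is recognizing that the clean conclusion $|g|_{\alpha,\ell} \le R$ of Lemma \ref{lem:reg_pw} is not enough: one must invoke the sharper intermediate estimate $K + \Lambda^{-\alpha} R$ coming from Lemma \ref{lem:to_ub_1}, as this is precisely what the defining inequality on $R$ and $\xi$ was engineered to exploit.
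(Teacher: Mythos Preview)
Your proof is correct and follows essentially the same route as the paper. The paper dispatches (ii) and (iii) as trivial and for (i) invokes \cite[Proposition 3.2]{korepanov2019explicit} to obtain $|\psi_{j,m}^{(i)}|_{\alpha,\ell} \le |\cP_{j,j+m-1}\psi^{(i)}|_{\alpha,\ell}/(1 - \xi e^{|\cP_{j,j+m-1}\psi^{(i)}|_{\alpha,\ell}}) \le (K+\Lambda^{-\alpha}R)/(1-\xi e^R) \le R$; your auxiliary claim with its convexity argument is precisely a self-contained proof of that cited inequality, and your observation that the sharper bound $K+\Lambda^{-\alpha}R$ (rather than just $R$) is needed in the numerator matches the paper's computation exactly.
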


\begin{proof} (ii) and (iii) are trivial. By \cite[Proposition 3.2]{korepanov2019explicit}
	\begin{align*}
		| \psi_{j}^{(i)} |_{\alpha , \ell}  \le \frac{ |  \cP_{j, j + m - 1} \psi^{(i)}  |_{\alpha , \ell } }{1 - \xi  e^{  |  \cP_{j, j + m - 1} \psi^{(i)}   |_{\alpha , \ell }  } } \le 
		\frac{K +   \Lambda^{- \alpha  }  R   }{1 - \xi e^R }  \le R.
	\end{align*}
\end{proof}

\begin{proof}[Completing the proof of Theorem \ref{thm:exp_loss}] Without loss of generality, we shall assume that $i = 1$.
	Write $n = \tilde{p} k + \ell$, where $0 \le \ell < \tilde{p}$.

	First assume $|u |_\alpha  \le R$ so that $\Vert u \Vert_\infty \le R$, since 
	$\int_M u \, d \lambda = 0$. Decompose $u = \psi_0^+ - \psi_0^{-}$, where 
	\begin{align*}
		\psi_0^+ = 1 + \max \{0, u\} \quad \text{and} \quad \psi_0^{-} = 1 - \min\{ 0 , u \}.
	\end{align*}
	Then, $\psi_0^{ \pm } \ge 1$,
	\begin{align*}
		\int_M \psi_0^{+} \, d \lambda = 	\int_M \psi_0^{-} \, d \lambda  \le 1 + \Vert u \Vert_\infty \le 1 + R,
	\end{align*} 
	and, for all $x,y \in M$,
	\begin{align*}
		| \log  \psi_0^{ \pm }(x) - \log \psi_0^{ \pm }(y)   | \le 	| \psi_0^{ \pm }(x) - \psi_0^{ \pm }(y)   | \le |u(x) - u(y)| \le Rd(x,y)^\alpha.
	\end{align*}
	Hence, $| \psi_0^{ \pm }   |_{\alpha , \ell} \le R$.
	
	Recursively define
	\begin{align*}
		\psi_{1}^{\pm} &=  \cP_{1, \tilde{p} + \ell } \psi_0^{\pm } - \xi \int_M  \psi_0^{\pm } \, d \lambda, \\ 
		\psi_{j+1}^{\pm} &= \cP_{ \ell + j \tilde{p}   + 1, \ell +  (j + 1) \tilde{p}} \psi_j^{\pm } - \xi \int_M  \psi_j^{\pm }, \quad j = 1, \ldots,  k-1.
	\end{align*}
	By Lemma \ref{lem:coupling}-(i), $| \psi_{j}^{\pm} |_{\alpha , \ell} \le R$,
	\begin{align}\label{eq:diff_recursive}
		\cP_n ( u )  = \psi_k^{+} -  \psi_k^{-} ,
	\end{align}
	and 
	\begin{align*}
		\int_M \psi_{k}^{\pm } \, d \lambda  &= \int_M \psi_{k-1}^{ \pm } \, d \lambda - \xi \int_M  \psi_{k-1}^{ \pm } \, d \lambda 
		= (1 - \xi ) \int_M  \psi_{k-1}^{ \pm } \, d \lambda \\
		&= \cdots = (1- \xi)^k \int_M \psi_0^{ \pm} \, d \lambda \le (1- \xi)^k  (1 + R).
	\end{align*}
	Set $q = 1 - \xi$. By \eqref{eq:psi_lb_ub}, we have that 
	\begin{align}\label{eq:ububu}
	\psi_k^{+} \le e^R \int_M \psi_k^{\pm} \, d \lambda \le e^R (1 + R) q^k.
	\end{align}
	The inequality 
	\begin{align*}
		|a - b| \le \max\{ a, b\} |  \log a - \log b | \quad \forall a,b > 0,
	\end{align*}
	combined with $| \psi_{j}^{\pm} |_{\alpha , \ell} \le R$ and \eqref{eq:ububu} yields 
	\begin{align*}
		&| \psi_k^{ \pm }(x) -  \psi_k^{ \pm }(y)  | \le e^R R (1 + R) q^k d(x,y)^\alpha.
	\end{align*}
	Hence, by \eqref{eq:diff_recursive}, 
	$$
	|\cP_n  ( u) |_\alpha  \le 2 e^R R (1 + R) q^k .
	$$
	
	Finally, to remove the restriction $| u |_\alpha  \le R$, it suffices to observe that 
	$
	v = R | u |_\alpha^{-1} u
	$
	satisfies $| v |_\alpha  \le R$ and therefore 
	$$
	|\cP_n  ( u) |_\alpha  =  R^{-1} | u |_\alpha  | \cP_n  ( v) |_\alpha \le   2 e^R  (1 + R) q^k | u |_\alpha .
	$$
	Moreover, $\int_M \cP^n u \, d \lambda = 0$, so that $\Vert \cP^n u   \Vert_\infty \le |  \cP^n u  |_\alpha$. Hence,
	$$
	\Vert \cP_n  ( u) \Vert_\alpha \le 4 e^R  (1 + R) q^k | u |_\alpha \le  4 e^R  (1 + R  )q^{-1} q^{ n / \tilde{p}}  | u |_\alpha.
	$$
\end{proof}

\subsection{Proof of Corollary \ref{cor:corr_decay}}\label{sec:proof_decor} By basic properties of $P_n$, we have that
\begin{align*}
	| \mu(   \bar{\psi}_1^n  \bar{\psi}_2^{n+m}    ) | &= | \mu( \psi_2^{m + n} \psi_1^n -  \psi_2^{n + m}  \mu(  \psi_1^n  ) ) | \\
	&= | \lambda \{    \psi_2 \cP_{n + m} [ (  \psi_1^n  -  \mu ( \psi_1^n )   ) \rho ]    \} | \\
	&\le \Vert \psi_2 \Vert_{ L^1( \lambda ) }  \Vert 
	\cP_{n + m} [ (  \psi_1^n  -  \mu ( \psi_1^n )   ) \rho ] 
	   \Vert_\alpha  \\
	&= \Vert \psi_2 \Vert_{L^1( \lambda )}  \Vert  \cP_{n + 1, n + m} [  \psi_1  \cP_n( \rho ) - \mu ( \psi_1^n )  \cP_n (\rho  )    ] \Vert_\alpha,
\end{align*}
where the basic identity $\cP_n(  f \circ \cT^n  \cdot g   ) = f \cP_n ( g)$ was used in the last equality.
Further,
\begin{align*}
	| \psi_1 \cP_n( \rho ) |_\alpha \le \Vert \psi_1 \Vert_\infty | \cP_n ( \rho ) |_\alpha 
	+ | \psi_1 |_\alpha  \Vert \cP_n ( \rho ) \Vert_\infty.
\end{align*}
Recalling \eqref{eq:iterate_to}, we have $| \cP_{n}(\rho) |_{\alpha, \ell} \le K + A(K')^{\alpha}$.
Hence, by \eqref{eq:from_lip_to_ll},
$$
| \cP_n ( \rho ) |_\alpha \le |  \cP_n(  \rho ) |_{ \alpha, \ell } 
e^{ |  \cP_n(  \rho ) |_{\alpha, \ell}  } \le  ( K + A(K')^{\alpha} ) e^{  K + A(K')^{\alpha} },
$$
and, by \eqref{eq:psi_lb_ub}, $\Vert \cP_n ( \rho ) \Vert_\infty \le e^{ K + A(K')^{\alpha} }$. We conclude that
$$
| \psi_1 \cP_n( \rho ) |_\alpha \le  \Vert \psi_1 \Vert_\alpha  ( K + A (K')^\alpha  + 1 ) e^{  K + A (K')^\alpha  }.
$$
and
$$
| \mu ( \psi_1 \circ \cT_n )  \cP_n (\rho  )   |_\alpha 
\le \Vert \psi_1 \Vert_{\alpha} ( K + A(K')^{\alpha} ) e^{  K + A(K')^{\alpha} }.
$$
It follows by Theorem \ref{thm:exp_loss} that 
\begin{align}\label{eq:second_order_corr}
	\Vert  \cP_{n + 1, n + m} [  \psi_1  \cP_n( \rho ) - \mu ( \psi_1^n )  \cP_n (\rho  )    ] \Vert_\alpha 
	\le  \bfC   \Vert \psi_1 \Vert_\alpha q^m.
\end{align}
Therefore, the first bound in \eqref{eq:correlations} holds.

For the second bound in \eqref{eq:correlations}, it is enough to prove that, whenever $\psi_i \in C^\alpha$ ($i = 1,2,3$) and $n, m, k \ge 0$,
\begin{align}\label{eq:to_show_corr3}
\begin{split}
	&| \mu(  \psi_1^n  \psi_2^{n+m}  \psi_3^{n+m+k} )
	- \mu (
	\psi_1^n   \psi_2^{n+m} ) \mu (   \psi_3^{n+m+k}  ) | 
	\le  \bfC \Vert \psi_1 \Vert_{\alpha} \Vert \psi_2 \Vert_\alpha \Vert \psi_3 \Vert_{\alpha} q^{ k }, \\
	&| \mu(  \psi_1^n  \psi_2^{n+m} \psi_3^{n + m + k}   )
	- \mu(  \psi_1^n ) \mu(  \psi_2^{n+m}  \psi_3^{n+m+k}  ) | 
	\le \bfC \Vert \psi_1 \Vert_{\alpha} \Vert \psi_2 \Vert_\alpha \Vert \psi_3 \Vert_{\alpha} q^{ m }.
\end{split}
\end{align}
To obtain the first bound in \eqref{eq:to_show_corr3}, we write
\begin{align*}
	&| \mu(  \psi_1^n  \psi_2^{n+m}  \psi_3^{n+m+k} )
	- \mu (
	\psi_1^n   \psi_2^{n+m} ) \mu (   \psi_3^{n+m+k}  ) |   \\
	&= |  \lambda \{  \psi_3 \cP_{n + m + k} [
	\rho \psi_1^n \psi_2^{n+m}  - \mu(  \psi_1^n \psi_2^{n+m}  ) \rho ]   \}   | \\
	&=  |  \lambda \{  \psi_3 \cP_{n + m + 1, n + m + k} [ \psi_2 
	\cP_{ n + 1, n + m } \psi_1  \cP_n \rho 
	- \mu(  \psi_1^n \psi_2^{n+m}  )  \cP_{n + m} \rho 
	]   \}   | \\
	&\le \Vert \psi_3 \Vert_\infty  \lambda \{  | \cP_{n + m + 1, n + m + k} [ \psi_2 
	\cP_{ n + 1, n + m } \psi_1  \cP_n \rho 
	- \mu(  \psi_1^n \psi_2^{n+m}  )  \cP_{n + m} \rho 
	]  | \},
\end{align*}
where the convention is that each operator acts on the entire expression to its right.
Let $\tilde{L}_i = \Vert \psi_i \Vert_\alpha + 1$, and define $\tilde{\psi}_i = \psi_i + \tilde{L}_i$. Note that $\tilde{\psi}_i \ge 1$ and, by \eqref{eq:from_ll_to_lip},
$
| \tilde{\psi}_i |_{ \alpha, \ell } \le 1$.
Writing $\tilde \psi_i^n = \tilde \psi_i \circ \cT_n$, we decompose 
\begin{align*}
	&\cP_{n + m + 1, n + m + k} [ \psi_2 
	\cP_{ n + 1, n + m } \psi_1  \cP_n \rho 
	- \mu(  \psi_1^n \psi_2^{n+m}  )  \cP_{n + m} \rho 
	]  \\
	&= \cP_{n + m + 1, n + m + k} [ \tilde{\psi}_2 
	\cP_{ n + 1, n + m } \tilde{\psi}_1  \cP_n \rho 
	- \mu(  \tilde{\psi}_1^n \tilde{\psi}_2^{n+m}  )  \cP_{n + m} \rho 
	] \\
	&+ \cP_{n + m + 1, n + m + k} [ \tilde{\psi}_2 
	\cP_{ n + 1, n + m } \tilde{L}_1  \cP_n \rho 
	- \mu(  \tilde{L}_1 \tilde{\psi}_2^{n+m}  )  \cP_{n + m} \rho 
	] \\
	&+ \cP_{n + m + 1, n + m + k} [ \tilde{L}_2
	\cP_{ n + 1, n + m } \tilde{\psi}_1  \cP_n \rho 
	- \mu(  \tilde{\psi}_1^n  \tilde{L}_2  )  \cP_{n + m} \rho 
	] 
	= I + II + III.
\end{align*}
Using \eqref{eq:iterate_to}, \eqref{eq:psi_lb_ub} and \eqref{eq:from_lip_to_ll}, it is straightforward to verify that each of the functions 
inside the square brackets satisfies $|\cdot |_{\alpha} \le \bfC \Vert \psi_1 \Vert_{\alpha} \Vert \psi_2 \Vert_\alpha$. By an application of Theorem \ref{thm:exp_loss}, we now obtain
the first bound in \eqref{eq:to_show_corr3}.

For the second bound in \eqref{eq:to_show_corr3}, we note that 
\begin{align*}
	&| \mu(  \psi_1^n  \psi_2^{n+m} \psi_3^{n + m + k}   )
	- \mu(  \psi_1^n ) \mu(  \psi_2^{n+m}  \psi_3^{n+m+k}  ) |  \\
	&= | \lambda \{
	\psi_2 \psi_3 \circ \cT_{n+m+1,n+m+k}
	\cP_{n+m}[(\psi_1^n-\mu(\psi_1^n))\rho]
	\} | \\
	&= | \lambda \{
	\psi_2  \psi_3 \circ \cT_{n+m+1,n+m+k}
	\cP_{n+1,n+m}\bigl[(\psi_1-\mu(\psi_1^n))\cP_n ( \rho ) ]
	\} | \\
	&\le \Vert \psi_2 \Vert_\infty \Vert \psi_3 \Vert_\infty
	\|
	\cP_{n+1,n+m}\bigl[(\psi_1-\mu(\psi_1^n))\cP_n ( \rho  ) ]
	\|_\alpha .
\end{align*}
Therefore, the desired bound follows from \eqref{eq:second_order_corr}.

\bibliography{raic}{}
\bibliographystyle{plainurl}

\end{document}